\newtheorem{thm}{Theorem}[section]
\newtheorem{lemma}[thm]{Lemma}
\newtheorem{prop}[thm]{Proposition}
\newtheorem{cor}[thm]{Corollary}
\newtheorem{defi}[thm]{Definition}
\newtheorem*{thm*}{Theorem}
\theoremstyle{definition}
\newtheorem*{remark}{Remark}
\newcommand{\lp}{\left(}
\newcommand{\rp}{\right)}
\newcommand{\texto}[1]{\quad\mbox{#1}\quad}
\newcommand{\back}{\backslash}
\newcommand{\R}{\mathbb{R}}
\newcommand{\ep}{\epsilon}
\newcommand{\halfplane}{\R^2_+}
\newcommand{\grad}[2]{|\nabla #1|^{#2}}
\newcommand{\dist}{\text{dist}} 
\newcommand{\HH}{\mathcal H}
\newcommand{\divergence}{\text{div}}
\newcommand{\abs}[1]{\left\lvert #1\right\rvert}
\newcommand{\norm}[1]{\left\lVert #1\right\rVert}
\newcommand{\be}{\begin{equation}}
\newcommand{\ee}{\end{equation}}
\newcommand{\bee}{\begin{equation*}}
\newcommand{\eee}{\end{equation*}}
\newcommand{\bea}{\begin{eqnarray}}
\newcommand{\eea}{\end{eqnarray}}
\newcommand{\bsplit}{\begin{split}}
\newcommand{\esplit}{\end{split}}
\newenvironment{myindentpar}[1]%
{\begin{list}{}%
         {\setlength{\leftmargin}{#1}}%
         \item[]%
}
{\end{list}}
\begin{document}
\title{\bf Gamma convergence of an energy functional related to the fractional Laplacian}
\author{Gonz\'alez, Mar\'ia del Mar\footnote{Universitat Polit\`ecnica de Catalunya, ETSEIB - Departament de Matem\`atica Aplicada I,  Av. Diagonal 647,
08028 Barcelona, SPAIN. Email: mar.gonzalez@upc.edu}}
\date{}
\maketitle

\begin{abstract}
We prove a $\Gamma$-convergence result for an energy functional related to some fractional powers of the Laplacian operator, $(-\Delta)^s$ for $1/2<s<1$, with two singular perturbations, that leads to a two-phase problem. The case $(-\Delta)^{1/2}$ was considered by Alberti-Bouchitt\'e-Seppecher in relation to a model in capillarity with line tension effect. However, the proof in our setting requires some new ingredients such as the Caffarelli-Silvestre extension for the fractional Laplacian and  new trace inequalities for weighted Sobolev spaces.
\end{abstract}

\bigskip

\noindent\textbf{AMS classification:} 49J45, 35J20



\section{Introduction and statement of the theorem}

\setcounter{equation}{00}

Let $\Omega$ be a bounded domain in $\R^3$ with smooth $\mathcal C^2$ boundary $\partial\Omega$ and let  $h:\Omega\to\R$ be the distance function to the boundary. Fix a real number $-1<a<0$. Let $\alpha,\beta,\alpha',\beta'\in\R$ such that $\alpha<\beta$, $\alpha'<\beta'$ and consider two double-well potentials $W, V:\R\to [0,\infty)$ such that $W$ only vanishes at $\alpha,\beta$, and $V$ only vanishes at $\alpha',\beta'$. For a function $u$ defined in $\Omega$, denote its trace on $\partial\Omega$ by $Tu$. Given $\epsilon>0$,  we study the following energy functional
\be
\label{functional-introduction}F^a_\epsilon[u]:=\ep^{1-a}\int_{\Omega}\grad u 2 h^a +\frac{1}{\ep^{1-a}}\int_{\Omega}W(u)h^{-a}+\lambda_\epsilon\int_{\partial\Omega}V(Tu).\ee
The aim of the present paper is to understand the $\Gamma$-convergence of this functional when $\ep\to 0$ and $\lambda_\ep\to \infty$.

 Note that \eqref{functional-introduction} generalizes the two-phase model  of Alberti-Bouchitt\'e-Seppecher considered in \cite{Alberti-Bouchitte-Seppecher:Phase-transition} in relation to capillarity energy with line tension.  They studied the
 $\Gamma$-convergence of
\be \label{functional-ABS}F_\epsilon[u]:=\ep\int_{\Omega}\grad u 2+\frac{1}{\ep}\int_{\Omega}W(u)+\lambda^0_\epsilon\int_{\partial\Omega}V(Tu)\ee
where $\lambda^0_\ep\to\infty$ is a sequence with some specific behavior as $\ep\to 0$.\\

Historically, this type of models appeared when studying phase transitions. Consider a container $\Omega\subset\R^3$ which is filled with two immiscible and incompressible fluids, or two different phases of the same fluid; equilibrium is achieved when we minimize
\be\label{functional-Modica-limit}E[u]=\sigma\HH^{2}(S_u)\ee
among all the admissible configurations $u\in BV(\Omega, \{\alpha,\beta\})$ with $\int_\Omega u=M_0$.
Here $\HH^2$ is the two-dimensional Hausdorff measure of $S_u$, the singular set of $u$, and $\sigma$ is a constant parameter.
The classical work by Modica \cite{Modica:phase-transitions} established that the following variational model for $u:\Omega\to[\alpha,\beta]$, $\int_\Omega u=M_0$, given by
\be\label{functional-Modica}E_\ep[u]=\epsilon\int_\Omega \grad u 2+\frac{1}{\ep}\int_\Omega W(u),\ee
 $\Gamma$-converges to $E$, for $\sigma:=2\int_\alpha^\beta \sqrt{W(t)}dt$.\\

Here we consider the generalization given by \eqref{functional-introduction}, where the weight $h^a$ is singular at the boundary $\partial\Omega$ since $-1<a<0$. With this modification, $F^a_\ep$ becomes the energy functional related to some fractional powers of the Laplacian, plus two double-well potential terms. Indeed, if  $s=\frac{1-a}{2}$,
then the Euler-Lagrange equation of the functional
\be\label{functional-J}
J[u]=\int_{x\in\mathbb R^n, y\in\mathbb R^+} \abs{\nabla u }^2 y^a \;dxdy
\ee
is just $(-\Delta)^s (Tu)=0$ in $\mathbb R^n$, as it has been shown in the recent work by Caffarelli-Silvestre \cite{Caffarelli-Silvestre}.

On the other hand, the boundary potential term in \eqref{functional-introduction} constitutes a very interesting modification from \eqref{functional-Modica}, and adds new terms in the $\Gamma$-limit. The first result involving boundary integrals was obtained by Modica \cite{Modica:phase-transitions-2}, when $\lambda_\epsilon=1$. Other works can be found in Cabr\'e and Sol\'a-Morales \cite{Cabre-SolaMorales:layer-solutions}, where they look at layer solutions for boundary reactions of the half-Laplacian ($s=1/2$). A refinement of the $\Gamma$-convergence result for $s=1/2$ is being studied by Cabr\'e-C\'onsul \cite{Cabre-Consul:boundary-reactions}. In addition, in the current work \cite{Monneau} by Monneau and the present author, we look at an homogenization problem for a reaction-diffusion equation with half-Laplacian, where we try to understand the interaction energy that appears in the problem.\\

The main theorem in the present paper states that the $\Gamma$-convergence of the sequence $F^a_\ep$, for some suitable scaling $\lambda_\epsilon$, has a similar behavior to the case $s=1/2$ of Alberti-Bouchitt\'e-Seppecher considered in \cite{Alberti-Bouchitte-Seppecher:Phase-transition}. However, the proof needs some new results on the fractional Laplacian: section \ref{section-perturbation} contains a result on singular perturbations of the norm $H^s(\partial\Omega)$, that is indeed the norm of traces of functions in a weighted Sobolev space $W^{1,2}(\Omega,w_a)$ for a suitable weight $w_a$. The second main ingredient, in section \ref{section-traces}, deals with this trace embedding, and gives a more precise control of the Sobolev constant. In particular, in theorem \ref{thm-trace-domain} we understand the relation between the functional \eqref{functional-J} in $\R^2_+$ and the non-local energy of the trace $Tu$ in $H^s(\mathbb R)$ given by
$$\int_{\R\times \R} \frac{\abs{Tu(x')-Tu(x)}^2}{\abs{x'-x}^{1+2s}}\;dx'dx.$$
The proof uses the interesting characterization of the fractional Laplacian in $\R$ as an extension problem to $\R\times \R_+$, given by Caffarelli-Silvestre in \cite{Caffarelli-Silvestre}.\\

Let us fix some notation. Set $-1<a<0$. Let $I:=\{\alpha,\beta\}$, $I':=\{\alpha',\beta'\}$. Assume that there exists $m$ such that $-m\leq \alpha,\alpha',\beta,\beta'\leq m$. Assume that both $W$ and $V$ are positive, increasing in $[m,+\infty)$ and decreasing in $(-\infty,-m]$, with growth at least linear at infinity. As we have mentioned, $W$ only vanishes at the two wells $\alpha<\beta$, and $V$ only at $\alpha'<\beta'$.

Consider the background space $X:=L^1(\Omega)\times L^1(\partial\Omega)$. Fix  $h:\Omega\to [0,+\infty)$ be the distance to the boundary of $\Omega$. Fix the singular weight in $\Omega$ given by $w_a:=h^a$, and consider the weighted Sobolev space $W^{1,2}(\Omega,w_a)$ with norm
$$\norm{u}_{W^{1,2}(\Omega,w_a)}^2:=\int_{\Omega}\abs{u}^2 h^a+\int_{\Omega}\grad u 2 h^a.$$
Note that the trace on $\partial\Omega$ of a function $u\in W^{1,2}(\Omega,w_a)$ is well defined (see theorem \ref{thm-Nekvinda}); denote it by $Tu$.

We define the functional $F^a_\epsilon$ as
\be\label{functional} F^a_\ep[u]:=\left\{
             \begin{split}
               &\epsilon^{1-a}\int_\Omega\grad u 2 h^a+\tfrac{1}{\ep^{1-a}}\int_\Omega W(u)h^{-a}+\lambda_\ep\int_{\partial\Omega} V(Tu), u\in W^{1,2}(\Omega,w_a)\cap X, \\
               &+\infty,  \hbox{ elsewhere in }X.
             \end{split}
           \right.
\ee
On the other hand, given $u\in BV(\Omega,I)$ and $v\in BV(\partial\Omega,I')$, set
\be\label{functional-limit-Phi}\Phi(u,v):=\sigma\HH^2(S_u)+\int_{\partial\Omega} \abs{\mathcal W(Tu)-\mathcal W(v)}+\kappa_s\HH^1(S_v),\ee
and
\be\label{functional-limit}
 F^a(u):=\left\{
             \begin{split}
               & \inf\{\Phi(u,v) : v\in BV(\partial\Omega,I')\},  & \mbox{ if }u\in BV(\Omega,I), \\
               &+\infty,  & \mbox{ elsewhere in }X.
             \end{split}
           \right.
\ee
Here $\HH^i$ denotes the $i$-dimensional Hausdorff measure, that in this case is well defined because of the hypothesis on the bounded variation of $u$, $v$. Also, $S_u$ is the set of all points where $u$ is essentially discontinuous, and the same for $S_v$. The main result of the present paper states that $F^a$ is the $\Gamma$-limit of the functionals $F^a_\epsilon$, for suitable positive constants $\sigma$, $\kappa_s$ and a function $\mathcal W$.

Finally, we set
\be\label{scaling-lambda}
\Lambda_\ep:=\ep^{\frac{1-a}{-a}}.\ee
It will become clear in sections \ref{section-perturbation} and \ref{subsection-boundary-effect} that this is the natural scaling for the problem.\\

We have that, under the above conditions:

\begin{thm}\label{main-theorem} Fix $-1<a<0$, $s=\frac{1-a}{2}$, and assume that \be\label{condition-lambda}\lambda_\epsilon=\frac{1}{\Lambda_\ep}.\ee
Then there exist constants $\sigma,\kappa_s>0$ such that
the functionals $F^a_\ep$ defined in \eqref{functional} $\Gamma$-converge to the functional $F^a$ given in \eqref{functional-limit} and any sequence $(u_\ep)$ with bounded energy is precompact in $X$.
Moreover, $\sigma:=\mathcal W(\beta)-\mathcal W(\alpha)$ where $\mathcal W$ is a primitive of $2\sqrt W$, and $\kappa_s$ is a constant depending only on $s,V$ whose exact value is given in \eqref{kappa}.
\end{thm}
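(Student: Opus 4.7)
The plan is to establish the three ingredients of a $\Gamma$-convergence statement---compactness of bounded-energy sequences, the $\liminf$ inequality, and a recovery sequence---in the spirit of \cite{Alberti-Bouchitte-Seppecher:Phase-transition}, but with every estimate lifted to the weighted space $W^{1,2}(\Omega, w_a)$. The two tools prepared in sections \ref{section-perturbation} and \ref{section-traces} enter precisely where the classical $H^{1/2}$ argument for $s=1/2$ breaks down.

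\textbf{Compactness and the interior bound.} The weights in $F^a_\epsilon$ are arranged so that the pointwise AM-GM inequality
$$\epsilon^{1-a}\abs{\nabla u}^2 h^a + \tfrac{1}{\epsilon^{1-a}} W(u) h^{-a} \geq 2 \sqrt{W(u)}\,\abs{\nabla u}$$
makes the singular weight cancel completely. Via the coarea formula one recovers the classical Modica-Mortola bound with constant $\sigma = \mathcal{W}(\beta) - \mathcal{W}(\alpha)$, which together with truncation (using the at least linear growth of $W,V$ at infinity) and the trace theorem for $W^{1,2}(\Omega, w_a)$ furnishes $L^1(\Omega)$-compactness of $u_\epsilon$, $L^1(\partial\Omega)$-compactness of $Tu_\epsilon$, and the partial lower bound $\sigma \HH^2(S_u) \leq \liminf$ from the interior piece alone.

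\textbf{Boundary contribution (liminf).} Split $\Omega$ into a tubular neighborhood $\Omega_\epsilon := \{h < \Lambda_\epsilon\}$ of $\partial\Omega$ and its complement. On the complement, the bulk estimate already accounts for $\sigma\HH^2(S_u)$. Inside $\Omega_\epsilon$ the energy must be unfolded in two directions: integrating AM-GM in the transversal direction produces the mismatch term $\int_{\partial\Omega}\abs{\mathcal{W}(Tu)-\mathcal{W}(v)}$ (the cost of traversing the wells from $Tu$ down to $v$), while the tangential Dirichlet piece, combined with theorem \ref{thm-trace-domain}, is bounded below by a multiple of the non-local $H^s(\partial\Omega)$ seminorm of $Tu_\epsilon$. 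The choice $\lambda_\epsilon=1/\Lambda_\epsilon$ in \eqref{condition-lambda} calibrates the resulting functional on $\partial\Omega$ so that it falls exactly into the singular-perturbation framework of section \ref{section-perturbation}, whose $\Gamma$-limit contributes $\kappa_s \HH^1(S_v)$. Summing the three contributions gives $\Phi(u,v) \leq \liminf F^a_\epsilon(u_\epsilon)$ for any subsequential limit $v$ of $Tu_\epsilon$, and passing to the infimum over $v$ yields $F^a(u) \leq \liminf F^a_\epsilon(u_\epsilon)$.

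\textbf{Recovery sequence and main obstacle.} Given $u \in BV(\Omega, I)$ and a near-optimal $v$ for $F^a(u)$, we build $u_\epsilon$ in three stacked scales. Away from $\partial\Omega$, the standard Modica-Mortola construction using the one-dimensional optimal profile of $W$ across $S_u$ delivers the bulk cost $\sigma\HH^2(S_u)$. In a strip of thickness $\sim\Lambda_\epsilon$ we interpolate in the normal direction between $Tu$ and $v$, and the integrated AM-GM is saturated so as to pay exactly $\int_{\partial\Omega}\abs{\mathcal W(Tu)-\mathcal W(v)}$. Finally, on $\partial\Omega$ we paste in the optimal sequence of the singular-perturbation problem of section \ref{section-perturbation}, extended into $\Omega_\epsilon$ via the Caffarelli-Silvestre extension, which realizes $\kappa_s \HH^1(S_v)$. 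The principal difficulty is that the three scales must interact without producing parasitic cross-terms: the balance of $\int_0^{\Lambda_\epsilon} t^a\,dt$ against the prefactors $\epsilon^{1-a}$ and $\epsilon^{-(1-a)}$ is what forces the choice $\Lambda_\epsilon = \epsilon^{(1-a)/(-a)}$, and the Caffarelli-Silvestre extension near the singular set $S_v$ must be truncated and glued carefully so that its weighted Dirichlet energy does not exceed the $H^s$-cost. Verifying these two compatibilities rigorously is where the fractional trace machinery of section \ref{section-traces} is indispensable.
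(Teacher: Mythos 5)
Your overall architecture (compactness, $\liminf$, recovery) and your identification of the two key inputs --- the trace inequality of theorem \ref{thm-trace-domain} and the one-dimensional singular perturbation result of section \ref{section-perturbation} --- match the paper. But the way you propose to combine the three contributions in the $\liminf$ inequality contains a genuine gap, and it is precisely the gap the paper's measure-theoretic blow-up is designed to close.

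You split $\Omega$ into the strip $\Omega_\epsilon:=\{h<\Lambda_\epsilon\}$ and its complement, and then inside the strip you want to give the ``transversal'' part of $|\nabla u|^2$ to the wall mismatch term and the ``tangential Dirichlet piece'' to the line tension. Two things go wrong. First, the trace inequality \eqref{trace-inequality-domain} that you invoke bounds the $H^s$ seminorm of the trace by the \emph{full} weighted Dirichlet energy $\int|\nabla u|^2 w_a$ on a (sliced) cylinder of macroscopic size; the tangential derivatives alone do not control the trace (they do not even determine it --- think of $u$ depending only on the normal variable), and a strip of thickness $\Lambda_\epsilon$ is the wrong domain: the inequality is scale invariant and applied over a fixed-size slice $D_r$, not over a vanishing neighborhood. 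Second, the wall transition does not live inside $\{h<\Lambda_\epsilon\}$: in the optimal construction of proposition \ref{prop-wall} it occurs at depth $h\sim\epsilon$, while $\Lambda_\epsilon=\epsilon^{(1-a)/(-a)}\ll\epsilon^{1-a}\ll\epsilon$, so your strip misses it entirely; and for an arbitrary bounded-energy sequence one cannot localize the transitions a priori. So your additive decomposition ``normal gradient $\to$ wall term, tangential gradient $\to$ line term'' cannot be made to run without using the normal derivative twice. The paper's route is different and avoids this entirely: it passes to the limit of the energy measures $\mu_\epsilon$, then proves $\mu\geq\mu^1$, $\mu\geq\mu^2$, $\mu\geq\mu^3$ \emph{separately} (each inequality is allowed to use all of the energy), and concludes $\mu\geq\mu^1+\mu^2+\mu^3=\Phi(u,v)$ only because $\mu^1$ (a $2$-dimensional measure on $S_u\subset\Omega$), $\mu^2$ (a $2$-dimensional measure on $\partial\Omega$) and $\mu^3$ (a $1$-dimensional measure on $S_v$) are mutually singular. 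This mutual-singularity argument is the missing structural device in your proposal.

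A secondary point: your compactness paragraph attributes $L^1(\partial\Omega)$-precompactness of $Tu_\epsilon$ to ``the trace theorem''. The BV bound on $\mathcal W(u_\epsilon)$ gives $L^1(\Omega)$-compactness of $u_\epsilon$ and boundedness of $Tu_\epsilon$ in $L^1(\partial\Omega)$, but boundedness in $L^1$ is not precompactness. The paper obtains compactness of the traces via the flattening of proposition \ref{prop-flatten}, the slicing of proposition \ref{prop-boundary-term}, the trace inequality applied slice by slice, the one-dimensional compactness of proposition \ref{prop-compactness}, and the family slicing criterion of theorem \ref{thm-family-compactness}; none of this is standard trace theory and it must be spelled out. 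Your recovery-sequence sketch is closer in spirit to the paper (Caffarelli--Silvestre extension of the optimal boundary profile, Modica--Mortola in the bulk, interpolation near the wall), but note the paper uses \emph{four} nested regions and the intermediate scales $\Lambda_\epsilon\ll\sigma_\epsilon\ll\rho_\epsilon\ll\epsilon^{1-a}$, not a single strip of width $\Lambda_\epsilon$, again because the three transitions happen at different depths.
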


\bigskip

Let us motivate the previous theorem. The potential in the interior, $W$, forces the minimizer $u_\ep$ to take values near the two wells $\alpha,\beta$, while the gradient term in the functional penalizes the jump of the function; thus we create two bulk phases in the interior of the container $\Omega$, namely, $\{u=\alpha\}$, $\{u=\beta\}$, with interphase $S_u$. When $\ep\to 0$, $u_\ep\to u$ and $Tu_\ep \to v$. On the other hand, the second double-well potential $V$ forces the trace to take values near $\alpha',\beta'$, and thus it creates two boundary phases $\{v=\alpha'\}$, $\{v=\beta'\}$, separated by $S_v\subset \partial\Omega$. However, we usually have $Tu\neq v$, and thus additional terms appear in the limit functional \eqref{functional-limit-Phi}. Note that, although many of the arguments would work for a domain $\Omega$ contained in $\mathbb R^n$ for any $n\geq 3$, we restrict ourselves to dimension three so that the energy on $\partial\Omega$ occurring from the boundary phases concentrates over a one-dimensional set $S_v$ of $\partial\Omega$.

We remark here that the proof of the Gamma-convergence result in general follows some well established steps (see Alberti-Bouchitt\'e-Seppecher \cite{Alberti-Bouchitte-Seppecher:Phase-transition} or Palatucci \cite{Palatucci:Gamma-convergence}). However, in our case the shape of the functional requires a deep understanding of the singular factor in the energy and its relation to fractional Laplacian operator - this is precisely the main new idea of the present article.\\

The result is true if either $W$ or $V$ are identically zero. Indeed, if $V$ is zero, we are in the situation of section \ref{section-interior}, while if $W$ is zero, then we can ignore the first two terms in the $\Gamma$-limit \eqref{functional-limit-Phi}.\\

Palatucci (cf. \cite{Palatucci:Gamma-convergence}, or his PhD thesis \cite{Palatucci:thesis}), has considered the $\Gamma$-convergence of the super-quadratic functional
$$P^p_\epsilon[u]:=\ep^{p-2}\int_{\Omega}\grad u p +\frac{1}{\ep^{\frac{p-2}{p-1}}}\int_{\Omega}W(u)+\frac{1}{\ep}\int_{\partial\Omega}V(Tu)$$
when $p>2$. The main difference with ours is the lack of trace inequalities as in theorem \ref{thm-trace-domain} for the case $p>2$.

It has come to our attention that the generalization of the super-quadratic functional $P^p_\ep$ including a singular weight at the boundary $\partial\Omega$ is being completed by Palatucci-Sire \cite{Palatucci-Sire}. It uses some of the results of the present article.\\

Several open questions arise: first, when $0<s<1/2$ we do not know yet how to formulate a Gamma-convergence result due to the lack of layer solutions for the functional \eqref{functional-G}. It may even happen that some other new non-local quantities appear. Also, not much is known in the anisotropic case.\\

The outline of the paper is the following: in section 2 we give some general background on $\Gamma$-convergence and on the fractional Laplacian. In section 3 we study the problem in the interior ignoring the boundary interaction. The following two sections contain the main ingredients: in  section 4 we look at a singular perturbation result for the norm $H^s(E)$ where $E$ is an interval in $\R$, while  the next section deals with some new trace embeddings for weighted Sobolev embeddings. Section 6 contains several technical results that are needed in the main proof, including a dimension reduction argument. Finally, we prove theorem \ref{main-theorem} in the last section.


\section{Some background}

\setcounter{equation}{00}

In order to make this paper self-contained, we present to the reader some standard background. We will denote by $\epsilon\to 0$ any countable sequence converging to zero. The integrals in a domain $\Omega$ are taken with respect the standard Lebesgue measure, while the integrals on $\partial\Omega$ are with respect to the standard Hausdorff measure on the boundary. First, let us give the definition of $\Gamma$-convergence.

\begin{defi}
Let $X$ be a metric space, and for $\epsilon>0$, consider the functional $F_\epsilon:X\to [0,+\infty]$. We say that the sequence $F_\epsilon$ Gamma-converges to $F$ on $X$ as $\epsilon\to 0$ if the following conditions hold:
\begin{itemize}
\item[i.] Lower bound inequality: for every $u\in X$ and every sequence $(u_\epsilon)$ such that $u_\ep \to u$ in $X$, there holds
$$\liminf_{\epsilon\to 0} F_\ep[u_\ep]\geq F[u].$$
\item[ii.] Upper bound inequality: for every $u\in X$ there exists $(u_\ep)$ such that $u_\ep \to u$ in $X$ and
$$\lim_{\epsilon\to 0} F_\ep[u_\ep]= F[u].$$
\end{itemize}
\end{defi}

Together with conditions \emph{i.} and \emph{ii.} a compactness condition is usually proved:
\begin{itemize}
\item[\emph{iii.}] Given a sequence  $(u_\ep)$ when $\ep\to 0$ such that $F_{\ep}[u_\ep]$ is bounded, then $(u_\ep)$ is pre-compact in $X$.
\end{itemize}

 Consider a sequence of functionals $F_\ep$ that $\Gamma$-converges to $F$. If $u_\epsilon$ is a minimizer for $F_\epsilon$, then conditions \emph{i} and \emph{ii.} imply that any limit point of the sequence $(u_\ep)$ is a minimizer for $F$. Condition $\emph{iii.}$ assures that this limit point exists in $X$.\\

The definition and properties of bounded variation functions can be found in \cite{Evans-Gariepy} or \cite{Giusti}. Let $\Omega$ be an open subset of $\R^n$. Note that if $\partial\Omega$ is Lipschitz, the trace of a bounded variation function on $\partial\Omega$ is well defined and it belongs to $L^1(\partial\Omega)$.
Given $f\in BV(\Omega,I)$, we define $S_u$ to be the set of all points where $u$ is essentially discontinuous, that is, it has no approximate limit, and it agrees with the measure theoretic boundary of the set $\{u=\alpha\}$ in $\Omega$.\\

Now we give the relation between weighted Sobolev spaces and their traces.
Let $n>0$, $k\geq 0$ be integers, and $a$, $p$ real numbers, $1<p <\infty$. Let $\Omega$ be a non-empty, open, bounded subset of $\R^n$. Let $M$ be a closed
subset of $\partial\Omega$­ and let $d_M(x)$ be the distance function, $d_M(x) := \dist(x,M)$. For simplicity we shall write $d(x)$ instead of $d_M(x)$. For an integer $m$, $1\leq m \leq n$, we set $Q_m = (0, 1)^m$.

We shall write $(\Omega,M) \in B(k,n)$ for $1 \leq k \leq n - 1$, $n \geq  2$ if
and only if there exists a bi-Lipschitz mapping $B:Q_n  \to \Omega$ such that $B(\bar Q_k) = M$.

By $C^\infty(\bar\Omega)$ we denote the set of real functions $u$ defined on $\bar\Omega$  such that the derivatives $D^\alpha u$ can be continuously extended to $\bar\Omega$ for all multi-indexes $\alpha$. Consider the weight $w=d^a$.
Define the weighted Sobolev space $W^{1,p}(\Omega, w)$ as the closure of $\mathcal C^\infty(\bar\Omega)$ with
respect to the norm
$$\norm{u}^p_{W^{1,p}(\Omega,w)}:=\int_\Omega \abs{u}^p w\; dx+\int_\Omega \abs{Du}^p w\;dx.$$
These norms have been well studied for a certain class of weights, called $A_p$ weights (see \cite{Garcia-Cuerva-Rubio}, \cite{Stein:Harmonic-analysis}). We will also need some other norms for functions $v$ defined on an interval $E\subset\R$ given by
$$\norm{v}_{H^s(E)}^2=\norm{v}_{L^2(E)}^2+\int_{E^2} \frac{\abs{v(x')-v(x)}^2}{\abs{x'-x}^{1+2s}}\;dx'dx$$
for $0<s<1$.

Although it can be stated more generally, we are just interested in the case  $k=n-1$:
\begin{thm}[theorem 2.8 in \cite{Nekvinda:characterization-tracesW}]\label{thm-Nekvinda}
Let $n \geq 2$, $-1 < a < p-1$ and $(\Omega,M) \in B(n -1,n)$. Then there
exists a unique bounded linear operator
\be\label{trace-operator}T: W^{1,p}(\Omega, w) \to  W^{1-\frac{1+a}{p},p}(M)\ee
such that $T u = u|_M$ for all $u \in \mathcal C^\infty(\bar Q_n)$.
\end{thm}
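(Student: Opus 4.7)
The plan is to reduce to a reference configuration via the given bilipschitz chart, estimate the Gagliardo seminorm of the trace by a weighted integral of the gradient through a fundamental theorem of calculus argument, and then extend by density from $\mathcal C^\infty(\bar\Omega)$. First, I would use the bilipschitz map $B:Q_n \to \Omega$ with $B(\bar Q_{n-1}) = M$ to reduce to the model case $\Omega = Q_n$, $M = Q_{n-1} \times \{0\}$. Because $B$ and $B^{-1}$ are Lipschitz, $d_M(B(y))$ is comparable to $\dist(y, \bar Q_{n-1})$, hence to $y_n$ for $y \in Q_n$. The chain rule and the bounded Jacobian imply that $u \circ B \in W^{1,p}(Q_n, y_n^a)$ with equivalent norm, and its trace on $Q_{n-1}\times\{0\}$ corresponds to $u|_M$ with equivalent Gagliardo norm. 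So it suffices to show that, for every $u \in \mathcal C^\infty(\bar Q_n)$,
$$\norm{u(\cdot,0)}_{L^p(Q_{n-1})}^p + [u(\cdot,0)]_{W^{s,p}(Q_{n-1})}^p \le C \int_{Q_n} (\abs{u}^p + \grad{u}{p}) \, y_n^a\, dy,$$
with $s = 1 - (1+a)/p$.

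Next, for smooth $u$ and $x',y' \in Q_{n-1}$ let $h = \abs{x'-y'}$. Using the fundamental theorem of calculus along a polygonal path that climbs from $(x',0)$ up to height $\tau$, moves horizontally across to $y'$, and descends to $(y',0)$, and then averaging $\tau$ over $(0,h)$, I would obtain a pointwise bound of the schematic form
$$\abs{u(x',0) - u(y',0)} \le \frac{C}{h}\int_0^h\!\Big[\int_0^\tau\!\!\big(\abs{\partial_n u(x',r)} + \abs{\partial_n u(y',r)}\big) dr + h \!\int_0^1\!\!\abs{\nabla u(m_\theta,\tau)}\,d\theta \Big] d\tau,$$
where $m_\theta$ interpolates between $x'$ and $y'$. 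The key weighted step is the one-dimensional Hölder estimate
$$\int_0^h \abs{\partial_n u(x',r)}\, dr \le \left(\int_0^h \abs{\partial_n u(x',r)}^p r^a\, dr\right)^{1/p}\left(\int_0^h r^{-a/(p-1)}\,dr\right)^{(p-1)/p},$$
whose second factor is finite precisely because $a < p-1$ and equals a constant times $h^{(p-1-a)/p}$. Raising the pointwise bound to the $p$-th power, dividing by $h^{n-1+sp}$, integrating over $Q_{n-1}\times Q_{n-1}$, and swapping the order of integration by Fubini collapses the geometric factors with the exponent $s = 1-(1+a)/p$ and produces exactly $\int |\nabla u|^p y_n^a\, dy$ on the right. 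The $L^p$ part of the trace norm is handled by a simpler one-variable version of the same estimate, and it is there that the hypothesis $a > -1$ enters, as otherwise $r^{-a/(p-1)}$ would fail to be integrable near $0$.

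Finally, the resulting operator $T$ is bounded on $\mathcal C^\infty(\bar\Omega)$, and since this space is dense in $W^{1,p}(\Omega, w)$ by the very definition used in the paper, a standard Cauchy-sequence argument extends $T$ uniquely to a bounded linear operator on $W^{1,p}(\Omega, w)$; its values on smooth functions coincide with $u|_M$ by continuity, which gives uniqueness of the extension. The main obstacle is the bookkeeping of weighted Hölder exponents and geometric factors so that the Sobolev-Slobodeckij exponent $s = 1 - (1+a)/p$ drops out cleanly; both endpoint conditions $-1 < a$ and $a < p-1$ enter in an essential way through the convergence of the auxiliary one-dimensional weighted integrals, and neither can be relaxed within this scheme.
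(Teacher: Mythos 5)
First, a remark on provenance: the paper does not prove this statement at all; it is imported verbatim as Theorem 2.8 of Nekvinda's paper, so there is no internal argument to compare yours with. Judged on its own, your outline has the right skeleton (bilipschitz reduction to $Q_n$ with weight comparable to $y_n^a$, a fundamental-theorem-of-calculus estimate, density of $\mathcal C^\infty(\bar\Omega)$ in $W^{1,p}(\Omega,w)$ by definition), but the central seminorm estimate fails as written, by a logarithm. After your H\"older step you have $\bigl(\int_0^h\abs{\partial_n u(x',r)}\,dr\bigr)^p\le C\,h^{\,p-1-a}\int_0^h\abs{\partial_n u(x',r)}^p r^a\,dr$, and since $sp=p-1-a$, dividing by $h^{\,n-1+sp}$ leaves exactly the kernel $h^{-(n-1)}$ with $h=\abs{x'-y'}$. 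Carrying out the Fubini you invoke, the $y'$-integration for fixed $x'$ and $r$ runs over $\{\abs{x'-y'}>r\}$ and gives $\int_{r<\abs{x'-y'}<C}\abs{x'-y'}^{-(n-1)}dy'\approx\log(C/r)$ in the $(n-1)$-dimensional variable $y'$. So this route does not "produce exactly" $\int\grad u p\,y_n^a$; it produces $\int\grad u p\,y_n^a\log(C/y_n)$, and the loss is genuine: for a profile $\abs{\partial_n u(x',r)}\sim r^{-(1+a)/p}\abs{\log r}^{-\gamma}$ with $1/p<\gamma\le 2/p$ the right-hand side of the theorem is finite while your chain of estimates diverges. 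Pointwise H\"older at the exactly conjugate weighted exponent, followed by integration in $h$, is lossy at this borderline.

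The missing idea is a Hardy-type summation over scales (this is the actual heart of the trace estimate). Integrate in $y'$ first, over the spheres $\abs{x'-y'}=h$, which turns the vertical contributions into the one-dimensional inequality
\begin{equation*}
\int_0^1\Bigl(\int_0^h g(r)\,dr\Bigr)^p h^{-1-sp}\,dh\;\le\;C\int_0^1 g(r)^p\,r^a\,dr,\qquad g=\abs{\partial_n u(x',\cdot)},
\end{equation*}
which is precisely a weighted Hardy inequality, valid because $sp=p-1-a>0$, i.e.\ because $a<p-1$; equivalently, compare $u(x',0)$ and $u(y',0)$ with the mean of $u$ over a cube of side $\sim h$ at height $\sim h$ above the midpoint, so that the vertical pieces carry a gain $(r/h)^\delta$ that removes the logarithm. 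With that substitution (and the analogous care for your horizontal term), the remaining ingredients of your outline---the comparability $d_M(B(y))\sim y_n$ on the model cube, equivalence of the Gagliardo norms under the bilipschitz chart since $0<s<1$, the easier $L^p$ bound using $a>-1$, and the extension by density with uniqueness---do go through and yield the stated bounded trace operator.
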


In the special case that $p=2$, $s=\frac{1-a}{2}$, the trace operator \eqref{trace-operator} reads precisely
$$T: W^{1,2}(\Omega, w) \to  H^s(M).$$

\bigskip

The classical reference for the fractional Laplacian $(-\Delta)^s$ is the book by Landkof \cite{Landkof}, although a good reference is the PhD thesis by Luis Silvestre \cite{Silvestre:thesis}. Given $s\in(0,1)$ we define the fractional Laplacian of a function $f:\R^n\to \R$, as a pseudo-differential operator by
$$\widehat{(-\Delta)^s} f(\xi)=\abs{\xi}^{2s}\hat f(\xi),$$
i.e, its principal symbol is $\abs{\xi}^{2s}$. It can also be written as the singular integral
$$(-\Delta)^s f(x)=C_{n,s}\int_{\R^n}\frac{f(x)-f(\xi)-\nabla f(x)\cdot(x-\xi)\chi_{\{\abs{x-\xi}<1\}}}{\abs{x-\xi}^{n+2s}}\;d\xi.$$\\

Caffarelli-Silvestre have developed in \cite{Caffarelli-Silvestre} an equivalent definition using an extension problem, that is crucial in the present work. For a function $f:\R^n\to \R$, we construct the extension $u:\R^n\times [0,+\infty)\to \R$, $u=u(x,y)$, as the solution of the equation
\be\left\{\begin{split}
\Delta_x u+\frac{a}{y}\partial_y u+\partial_{yy} u=& \;0 \quad \mbox{for }x\in\R^n,\; y\in[0,+\infty),\label{equation-extension}\\
u(x,0)=& f(x),
\end{split}\right.\ee
for $s=\frac{1-a}{2}$. This type of degenerate elliptic equations have been studied in \cite{Fabes-Kenig-Serapioni:local-regularity-degenerate}.  Then the fractional Laplacian of $f$ can be recovered as
$$(-\Delta)^s f=c_{n,s}\lim_{y\to 0} y^a \partial_y u,$$
i.e., we are looking at a non-local Dirichlet-to-Neumann operator.
Note that equation \eqref{equation-extension} can be written in divergence form as
$$\divergence(y^a\nabla u)=0,$$
which is the Euler-Lagrange equation for the functional
$$J[u]=\int_{x\in\R^n, y>0} \grad u 2 y^a \,dxdy.$$
To finish, just mention that the Poisson kernel for the fractional Laplacian $(-\Delta)^s$ is given by
\be\label{Poisson-kernel}P(x,y)=c_{n,s}\frac{y^{1-a}}{\lp\abs{x}^2+\abs{y}^2\rp^{\frac{n+1-a}{2}}},\ee
and thus $u=P*_x f$.


\section{$\Gamma$-convergence in the interior}\label{section-interior}

\setcounter{equation}{00}

The theory of phase transitions (see Modica \cite{Modica:phase-transitions}, Modica-Mortola \cite{Modica-Mortola} for the classical references, or Alberti \cite{Alberti:survey}, for a very well written survey), studies the interface between two fluids in a container $\Omega$ neglecting the interaction with the boundary. In particular, it is proven that the functionals $E_\ep$ defined in \eqref{functional-Modica}, $\Gamma$-converge to $E$ given in \eqref{functional-Modica-limit}. In this section we consider the generalization to $E^a_\ep$ given in \eqref{functional-interior}, that involves a singular weight $w_a:=h^a$. However, since the interaction with the boundary is neglected and the weight is regular in the interior, the behavior of $E^a_\ep$ is going to be very similar to $E_\ep$. Indeed, we can easily modify the argument of Modica to prove:

\begin{prop}\label{prop-interior}
Let $A$ be a domain contained in $\Omega$, $h:=\dist(\cdot,\partial\Omega)$, $w_a:=h^a$. Assume, in addition, that
$\dist(A,\partial\Omega)\geq r$ for some $r>0$ fixed. Set
\be\label{functional-interior}
E^a_\ep[u,A]:=\epsilon^{1-a}\int_A\grad u 2 h^a+\frac{1}{\ep^{1-a}}\int_A W(u)h^{-a}\ee
and
\be\label{functional-interior-limit}E^a[u,A]:= \sigma\HH^2(S_u\cap A) \ee
for $\sigma:=2\int_\alpha^\beta \sqrt W$. Then
\begin{itemize}
\item[i.] $\forall u\in BV(A,I)$, $\forall (u_\ep)\subset W^{1,2}(A,w_a)$ such that $u_\ep\to u$ in $L^1(A)$, we have
$$\liminf_{\ep\to 0} E^a_\ep [u_\ep,A]\geq \sigma E^a[u,A].$$
\item[ii.] $\forall u\in BV(A,I)$, there exists $(u_\ep)\subset W^{1,2}(A,w_a)$ such that $u_\ep \to u$ in $L^1(A)$ and
$$\limsup_{\ep\to 0} E^a_\ep [u_\ep,A]\leq \sigma E^a [u,A].$$
Moreover, when $S_u$ is a closed Lipschitz surface in $A$, the functions $u_\epsilon$ may be required to be Lipschitz continuous with constant $\frac{C}{\ep^{1-a} r^a}$,
and to converge uniformly to $u$ in every set with positive distance from $S_u$ and away from the boundary.
\item[iii.] Any sequence $(u_\ep)\subset W^{1,2}(A,w_a)$ with uniformly bounded energies $E_\epsilon^a[u_\ep,A]$ is pre-compact in $X$ and every cluster point belongs to $BV(A,I)$.
\end{itemize}
\end{prop}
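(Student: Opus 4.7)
The plan is to adapt the classical Modica--Mortola $\Gamma$-convergence argument to the weighted setting. The key structural observation is that although the weight $w_a = h^a$ is nontrivial, the pairing of the two bulk coefficients $h^a$ and $h^{-a}$ conspires to cancel exactly in the AM--GM step underlying the Modica argument; moreover, on $A$ the weight is smooth and uniformly bounded between positive constants since $r\le h\le \diam\Omega$.

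For the liminf (part i), Young's inequality applied pointwise gives
$$\ep^{1-a}h^a\abs{\nabla u}^2+\frac{W(u)h^{-a}}{\ep^{1-a}}\ge 2\sqrt{W(u)}\,\abs{\nabla u}.$$
Setting $\mathcal W(t):=2\int_\alpha^t\sqrt{W(s)}\,ds$, the right-hand side equals $\abs{\nabla\mathcal W(u)}$, whence $E^a_\ep[u,A]\ge\int_A\abs{\nabla\mathcal W(u)}\,dx$. Given $u_\ep\to u$ in $L^1(A)$ with bounded energy, continuity of $\mathcal W$ plus the superlinear growth of $W$ at infinity yield $\mathcal W(u_\ep)\to\mathcal W(u)$ in $L^1(A)$, and lower semicontinuity of the total variation gives
$$\liminf_{\ep\to 0}E^a_\ep[u_\ep,A]\ge\abs{D\mathcal W(u)}(A)=(\mathcal W(\beta)-\mathcal W(\alpha))\,\HH^2(S_u\cap A)=\sigma\,\HH^2(S_u\cap A).$$
The same AM--GM bound controls $\mathcal W(u_\ep)$ in $BV(A)$, giving $L^1$ precompactness of $u_\ep$ by strict monotonicity of $\mathcal W$ on $[-m,m]$; combined with $\int_A W(u_\ep)h^{-a}\le C\ep^{1-a}$ and the uniform lower bound $h^{-a}\ge(\diam\Omega)^{-a}$, any $L^1$ limit point must take values in $I=\{\alpha,\beta\}$, establishing (iii).

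For the limsup (part ii), a standard density argument reduces matters to $u\in BV(A,I)$ with $S_u$ a closed Lipschitz surface. Let $d(x)$ denote the signed distance to $S_u$, and let $q\colon\R\to[\alpha,\beta]$ be the optimal profile $q'=\sqrt{W(q)}$ with $q(\pm\infty)=\alpha,\beta$. Because the AM--GM equality condition $\abs{\nabla u}=h^{-a}\sqrt{W(u)}/\ep^{1-a}$ is position dependent, the right recovery ansatz uses a position-dependent rescaling,
$$u_\ep(x):=q\!\left(\frac{d(x)\,h(x)^{-a}}{\ep^{1-a}}\right),$$
suitably truncated so that $u_\ep$ equals $\alpha$ or $\beta$ outside a tubular neighborhood of $S_u$. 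Working in tubular coordinates near $S_u$ and substituting $\tau=t\,h(y)^{-a}/\ep^{1-a}$ in the normal variable, the integrand in the bulk energy collapses at each $y\in S_u$ to
$$\int_\R\bigl[\abs{q'(\tau)}^2+W(q(\tau))\bigr]\,d\tau=2\int_\alpha^\beta\sqrt{W(t)}\,dt=\sigma,$$
yielding $E^a_\ep[u_\ep,A]\to\sigma\,\HH^2(S_u\cap A)$. The stated Lipschitz bound for $u_\ep$ is a direct consequence of the chain rule applied to this ansatz and the boundedness of $h^{-a}$ on $A$.

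The principal obstacle is the position-dependent rescaling inside $q$: the chain rule produces cross terms of the form $q'\cdot d\,\nabla(h^{-a})/\ep^{1-a}$ whose contribution must be controlled. Since $d=O(\ep^{1-a}h^a)$ inside the transition layer and $h^{-a}$ is Lipschitz on $A$, these cross terms contribute at lower order, but a careful freezing argument --- covering $S_u\cap A$ by small balls on which $h$ is nearly constant, running the one-dimensional calculation locally, and patching via a subordinate partition of unity --- is needed to turn this heuristic into the clean energy estimate $\sigma\,\HH^2(S_u\cap A)$ in the limit. Once this is done, the remaining details are standard Modica--Mortola technology.
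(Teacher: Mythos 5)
Your proposal is essentially correct and follows the same route as the paper. Part i uses the same pointwise Young/AM--GM reduction to $\int_A\abs{\nabla\mathcal W(u)}$ and lower semicontinuity of the total variation, and part iii is derived from the same $BV$ bound on $\mathcal W(u_\ep)$ together with the vanishing of $\int_A W(u_\ep)$; the paper invokes a truncation to $[\alpha,\beta]$ rather than your equi-integrability appeal, but both standard routes work. For part ii the paper's recovery sequence is $u_\ep(x)=\theta\bigl(e(x)\,h^{-a}(0,\omega)/\ep^{1-a}\bigr)$, i.e.\ the weight is frozen at the projection of $x$ onto $S_u$; your ansatz $q\bigl(d(x)\,h(x)^{-a}/\ep^{1-a}\bigr)$ evaluates $h$ at $x$ itself, which is a cosmetic difference, and both produce cross-term corrections that are of lower order (the paper writes this as an $R(\omega,t)\,o(1)$ remainder in $\abs{\nabla u_\ep}^2$, while you invoke a local-freezing/partition-of-unity argument). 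In both cases the $h^a$ and $h^{-a}$ factors cancel in the normal-variable substitution, reproducing the unweighted optimal-profile energy $\sigma$ per unit area of $S_u$.
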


\begin{proof}
The proof is essentially the one of Modica and can be found in Alberti \cite{Alberti:survey}.
By a well known truncation argument (\cite{Alberti-Bellettini:anisotropic-model}, lemma 1.14), we can assume that $u:A\to[\alpha,\beta]$. Now, use the inequality $x_1^2+x_2^2\geq 2 x_1 x_2$ with
$x_1=\ep^{(1-a)/2}\grad u {} h^{a/2}$ and $x_2=\ep^{-(1-a)/2} W^{1/2}(u) h^{-a/2} $, then
\be\label{estimate1}E^a_\epsilon[u,A]\geq 2\int_A \sqrt{W(u)}\grad u {}=\int_A \abs{\nabla (\mathcal W(u))}\ee
where $\mathcal W:[\alpha,\beta]\to \R$ is a primitive of $2\sqrt {W}$. This gives \emph{iii.} and \emph{i.} using standard arguments, and can be found exactly in \cite{Alberti:survey}, paragraph 4.5.\\

For \emph{ii.} we need to take care of the weight $w_a$ in the construction. Let $u\in BV(A,I)$. Without loss of generality, we can assume that its singular set $S_u$ is a Lipschitz surface in $A$, even a polyhedral surface of dimension 2 (see \cite{Giusti}, theorem 1.24). We would like to construct a sequence of functions $u_\ep$ that converges to $u$ in $L^1(A)$. First, it is possible to give coordinates $(e(x),\omega)$ in $\Omega$ such that $\omega$ parameterizes $S_u$, and $e(x)\in\R$ is just the signed distance to $S_u$ (positive where $u=\beta$ and negative where $u=\alpha$).
Next, solve the ODE
\be\label{eq1}\theta'=\sqrt{W(\theta)}\ee
with initial condition $\theta (0)=\frac{\alpha+\beta}{2}$. This $\theta$ is the well known optimum profile for the case $a=0$. In order to take into account the weight $w_a$ we set, for every fixed $\omega$, (note that $(0,\omega)$ is any point in $S_u$),
\be\label{eq2}\phi_\omega(t):=\phi(t,\omega)=\theta\lp\frac{t}{h^{a}(0,\omega)}\rp.\ee
Now, for each $\ep>0$, let $t=e(x)/\ep^{1-a}$ and
\be\label{eq3}
u_\ep(x):=\phi_\omega\lp\frac{e(x)}{\ep^{1-a}}\rp.\ee
When $\ep\to 0$ we can compute that
$$\grad {u_\ep} 2(x)= \frac{1}{\epsilon^{2(1-a)}}\left[\phi_\omega'(t)^2+R(\omega,t)o(1)\right].$$
Then we can use the coarea formula to calculate the energy of this function:
\bee \begin{split}
E^a_\ep[u_\ep,A]  = & \;\ep^{1-a}\int_A \grad {u_\ep} 2 h^a +\frac{1}{\ep^{1-a}}\int_A W(u_\ep)h^{-a} \\
=&\frac{1}{\ep^{1-a}}\int_A \left[\lp\phi_\omega'(t)\rp^2h^a \;dx+ W(\phi_\omega(t))h^{-a} + o(1)\right]\;dx \\
=&\int_{-\infty}^{+\infty} \int_{\Sigma_{\ep^{1-a}t}} \left [ \lp\phi_\omega'(t)\rp^2 h^a+ W(\phi_\omega(t))h^{-a}+o(1)\right]\; d\omega \;dt.
\end{split}\eee
When $\ep\to 0$, the level set $\Sigma_{\ep^{1-a}t}$ converges to $S_u\cap A$, and
if $x$ is written in the new coordinates $(e(x),\omega)$, then $h(t,\omega)$ converges to $\dist((0,\omega),\partial \Omega)=h(0,\omega)$. Taking the limit we have that
\bee\limsup_{\ep\to 0} E^a_\ep[u_\ep,A]=\int_{-\infty}^{+\infty}\int_{S_u\cap A} \left [ \lp\phi_\omega'(t)\rp^2 h^a(0,\omega)+ W(\phi_\omega(t))h^{-a}(0,\omega)\right]\;d\omega dt.\eee
But because of \eqref{eq1} and \eqref{eq2}, both terms in the above integration are equal. Then the inequality  $x_1^2+x_2^2\geq 2x_1 x_2$ for two positive numbers $x_1=x_2$ becomes an equality and thus
\bee\begin{split}
\limsup_{\ep\to 0} E^a_\ep[u_\ep,A]&=   \int_{S_u\cap A} \int_{-\infty}^{+\infty} 2 \sqrt{W(\phi_\omega(t))}\phi_{\omega}'(t)\;dtd\omega \\
& =\int_{S_u\cap A} \int_{\alpha}^{\beta} 2\sqrt {W(r)}\; dr d\omega=\sigma \HH^2(S_u\cap A)
\end{split}\eee
as we wished. The Lipschitz constant of $u_\ep$ is computed from \eqref{eq2} and \eqref{eq3}.
\end{proof}


\section{Perturbation of the norm $H^s$}\label{section-perturbation}

\setcounter{equation}{00}

Let $1/2<s<1$. In this section we consider a singular perturbation of the norm $H^s(E)$ when $E$ is a bounded interval in $\R$. As usual, denote  $s=\frac{1-a}{2}$, so $-1<a<0$, and $I':=\left\{\alpha',\beta'\right\}$.
Consider a double-well potential $V:\R\to [0,\infty)$ vanishing only at $I'$, with the same hypothesis as in the introduction.

More precisely, we will study the $\Gamma$-convergence of the functional
\be\label{functional-G}
G^a_\epsilon[v,E]:=\frac{\epsilon^{1-a}}{D_s}\int_{E^2} \frac{\abs{v(x')-v(x)}^2}{\abs{x'-x}^{1+2s}}\;dx'dx +\lambda_\epsilon\int_E V(v)dx,
\ee
when $\ep\to 0$. Although $D_s$ could be any positive constant in this section, we will fix its value as given in \eqref{D_s}.\\

The functional $G^a_\epsilon$ presents the following scaling property that justifies the election of $\lambda_\epsilon$ in \eqref{condition-lambda}. Indeed, if we set $u^\epsilon(x):=u(\Lambda_\epsilon x)$, $E_\epsilon:=\{x: \Lambda_\epsilon x\in E\}$,  then we immediately see that
$$G^a_\epsilon[u,E]=G^a_1[u^\epsilon,E_\epsilon].$$
It is interesting to observe the deterioration of $\lambda_\ep$ when $a\to 0$. In particular, the functional for $s=1/2$, $a=0$ studied in \cite{Alberti-Bouchitte-Seppecher:singular-perturbations} reads
$$G_\epsilon[v,E]:=\frac{\ep}{2\pi}\int_{E^2} \frac{\abs{v(x')-v(x)}^2}{\abs{x'-x}^{2}}\;dx'dx +\lambda^0_\ep\int_E V(v)dx,$$
for $\lim_{\ep\to 0} \ep\log \lambda^0_\ep=K$, $0<K<\infty$.\\

It is natural then consider profiles on the whole real line that minimize the energy, i.e, we consider the following optimal profile problem
\begin{equation}\label{kappa}\kappa_s:=\inf\left\{ \frac{1}{D_s}\int_{\mathbb R^2}\frac{\abs{v(x)-v(x')}^2}{\abs{x-x'}^{1+2s}}\;dxdx'+\int_{\mathbb R} V(v)\;dx\right\},\end{equation}
where the infimum is taken among all the functions in the set
$$\left\{v\in H^s(\mathbb R) \mbox{ : }\lim_{x\to-\infty} v(x)=\alpha',\quad\lim_{x\to+\infty} v(x)=\beta'\right\}.$$
Then we can prove:

\begin{prop}\label{theorem-convergence1}
Let $E$ be an interval in $\R$. The functional defined on $L^1(E)$ given by
\bee\bar G^a_\epsilon [v,E]:=\left\{
                          \begin{array}{ll}
                            G^a_\epsilon[v,E], & \hbox{if} \quad v\in H^s(E),\\
                            +\infty, & \hbox{otherwise,}
                          \end{array}
                        \right.
\eee
$\Gamma$-converges in $L^1(E)$ to
\bee \bar G^a_0 [v,E]:=\left\{
                          \begin{array}{ll}
                            \kappa_s\HH^0(S_v) & \hbox{if} \quad v\in BV(E,I'),\\
                            +\infty, & \hbox{otherwise,}
                          \end{array}
                        \right.
\eee
where  $\HH^0(S_v)$ represents number of points in the singular set of $v\in BV(E,I')$. The constant $\kappa_s>0$ is given by the optimal profile problem \eqref{kappa}. Moreover,
every sequence $(v_\ep)\subset L^1(E)$ with uniformly bounded energies $G^a_\ep[v_\ep,E]$ is precompact in $L^1(E)$ and every cluster point belongs to $BV(E,I')$.
\end{prop}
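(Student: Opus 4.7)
The plan is to follow the three-step Gamma-convergence scheme—compactness, liminf, limsup—by blowing up the problem to a fixed one on $\R$ using the scaling identity $G^a_\ep[v,E]=G^a_1[v^\ep,E_\ep]$ recorded just above, and then reducing each transition to the optimal profile problem \eqref{kappa}. The nonlocality of the Gagliardo seminorm is the main novelty compared to the classical Modica proof: its positivity lets us discard cross-interaction terms in the liminf, whereas for the limsup we need the tail integrability $\int_1^\infty y^{-1-2s}\,dy<\infty$, i.e.\ precisely $s>1/2$, to show that cross interactions between distinct transition windows are $o(1)$.

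\smallskip\noindent\textbf{Compactness and liminf.} Given $G^a_\ep[v_\ep,E]\leq C$, first truncate at level $\pm m$ as in Proposition \ref{prop-interior}: this decreases both terms by the monotonicity assumptions on $V$. Then $\lambda_\ep\to\infty$ together with $\lambda_\ep\int_E V(v_\ep)\leq C$ forces $V(v_\ep)\to 0$ in $L^1(E)$, so any $L^1$-cluster point takes values in $\{\alpha',\beta'\}$ a.e. A fractional version of Modica's primitive trick—writing $\mathcal V$ for a primitive of $2\sqrt V$ and applying Cauchy--Schwarz to the two factors of $G^a_\ep$—yields a uniform $BV$-bound for $\mathcal V(v_\ep)$, hence $L^1$-precompactness and membership in $BV(E,I')$. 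For the liminf, let $v_\ep\to v$ in $L^1$ with jump set $\{x_1,\dots,x_N\}$ and pick pairwise disjoint intervals $J_i=(x_i-\delta,x_i+\delta)\subset E$. Discarding the nonnegative kernel cross-terms gives $G^a_\ep[v_\ep,E]\geq\sum_i G^a_\ep[v_\ep,J_i]$. Rescaling each piece centered at $x_i$ by $\Lambda_\ep$ turns it into $G^a_1[w^i_\ep,J_i^\ep]$ on the expanding interval $J_i^\ep=(-\delta/\Lambda_\ep,\delta/\Lambda_\ep)$, with $w^i_\ep(y):=v_\ep(x_i+\Lambda_\ep y)$. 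A subsequence of $w^i_\ep$ converges a.e.\ to a limit $w^i\in H^s_{\mathrm{loc}}(\R)$ taking the two wells on the two half-lines according to the sign of the jump of $v$ at $x_i$; Fatou and the definition \eqref{kappa} give $\liminf_\ep G^a_1[w^i_\ep,J_i^\ep]\geq\kappa_s$, and summing over $i$ completes the bound.

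\smallskip\noindent\textbf{Limsup.} Given $v\in BV(E,I')$ with $S_v=\{x_1,\dots,x_N\}$, fix $\eta>0$ and a competitor $\psi_\eta\in H^s(\R)$ in \eqref{kappa} with $G^a_1[\psi_\eta,\R]<\kappa_s+\eta$; after truncation we may assume $\psi_\eta$ equals the correct well outside $[-R_\eta,R_\eta]$. Set $v_\ep(x):=\psi_\eta\bigl(\pm(x-x_i)/\Lambda_\ep\bigr)$ on pairwise disjoint windows $\Omega^\ep_i=(x_i-R_\eta\Lambda_\ep,x_i+R_\eta\Lambda_\ep)$, the sign matching the jump direction, and extend by the appropriate well value elsewhere in $E$. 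For $\ep$ small the $\Omega^\ep_i$ are disjoint and contained in $E$; by the scaling identity each contributes at most $\kappa_s+\eta$. The cross contributions, both between distinct windows and between a window and its complement, after the change of variables $y=(x-x_i)/\Lambda_\ep$ reduce to integrals of $|y-y'|^{-1-2s}$ over pairs separated by distance $\gtrsim\Lambda_\ep^{-1}\to\infty$, hence are $o(1)$ by $s>1/2$. A diagonal argument in $\eta\to 0$ yields the recovery sequence.

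\smallskip\noindent\textbf{Main obstacle.} The most delicate step is the limsup: one must check that superposing several rescaled optimal profiles does not generate extra nonlocal energy in the limit. This is precisely where the hypothesis $s>1/2$ enters quantitatively, via the integrable tail of the Gagliardo kernel. Establishing positivity and the existence of (nearly) optimal profiles for $\kappa_s$ is a secondary technical point, handled by the direct method in $H^s(\R)$ together with the coercivity of $V$ away from the two wells.
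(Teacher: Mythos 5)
Your compactness argument has a genuine gap. You invoke a ``fractional version of Modica's primitive trick'' — Cauchy--Schwarz on the two terms of $G^a_\ep$ producing a uniform $BV$ bound on $\mathcal V(v_\ep)$ — but no such pointwise trick exists here. The classical Modica inequality $\ep|\nabla u|^2+\ep^{-1}W(u)\ge|\nabla(\mathcal W(u))|$ works because both summands are \emph{local}: you pair them at each point $x$. In $G^a_\ep$ the first term is a double integral (a Gagliardo seminorm) and the second a single integral; applying Cauchy--Schwarz to these two scalars yields only a number, not an integrand of a total-variation type. Consequently you cannot conclude $L^1$-precompactness and $BV$ membership this way. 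The paper avoids this entirely: it proves a quantitative lower bound on subintervals (its Lemma \ref{lemma-bound-from-below}), whose proof uses the Garsia--Rodemich monotone rearrangement inequality for the nonlocal kernel, and then combines this with Young measures and subadditivity to show that the set where the approximate limit is neither $\alpha'$ nor $\beta'$ is finite. That counting argument is what replaces the $BV$ bound.

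Your liminf has a related gap. After rescaling on $J_i$ you assert that a subsequence of $w^i_\ep$ converges a.e.\ to a profile $w^i$ that takes the two wells on the two half-lines, and then apply Fatou against $\kappa_s$. But $w^i_\ep$ could oscillate at scale $O(1)$ (in rescaled variables), splitting its transition into several partial transitions and losing energy against $\kappa_s$; nothing in the hypothesis $v_\ep\to v$ in $L^1(E)$ forces the rescaled blow-up to have the admissible behavior at $\pm\infty$ required in \eqref{kappa}. The paper again uses rearrangement: it replaces $v_\ep$ on $J_j$ by its monotone rearrangement $v^*_{\ep,j}$, which only decreases the energy, and the monotone sequence then satisfies the hypotheses of a localized lower bound (the paper's Proposition \ref{prop-lower-bound}) whose conclusion is $\ge\kappa_s$. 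Without this, ``Fatou plus the definition of $\kappa_s$'' is not justified.

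Your limsup is essentially the right construction and matches the paper's: build the recovery sequence by gluing rescaled, compactly truncated near-optimal profiles, and use $s>1/2$ for the integrable tail of the kernel to kill cross terms. The one point you gloss over is the truncation itself: replacing the profile by a function equal to the wells outside $[-R_\eta,R_\eta]$ does not obviously preserve near-optimality of the Gagliardo energy. The paper handles this cleanly by introducing the auxiliary problems $\kappa_s^T$ (minimization over profiles forced to equal $\alpha'$, $\beta'$ outside $[-T,T]$), showing they are non-increasing in $T$ and $\kappa_s^T\to\kappa_s$ (its Proposition \ref{prop-optimal-modified}), and using the corresponding minimizer $\phi^T$ directly in the recovery sequence followed by diagonalization in $T$. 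With that repair the limsup step would be complete.
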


\bigskip

The proof of this proposition is similar to the work of Garroni-Palatucci \cite{Garroni-Palatucci:singular-perturbation-fractional-norm}, theorem 2.1. Indeed, they considered the functional
\be\label{functional-Palatucci}
\tilde G_{\tilde \epsilon}^p[v,E]:=\tilde \epsilon^{p-2}\int_{E^2}\frac{\abs{v(x)-v(x')}^p}{\abs{x-x'}^p}\;dxdx'+\frac{1}{\tilde \epsilon}\int_{E} V(v)\;dx\ee
for $p>2$. Our case is analogous if we take $p=2s+1$ and $ \tilde\ep=\Lambda_\ep$ since the exponent of the term $\abs{v(x)-v(x')}$ does not play any special role in their proof.

\bigskip

We remark that the case $s=1/2$ was considered in \cite{Alberti-Bouchitte-Seppecher:singular-perturbations}. The main difference with respect to the case $1/2<s<1$, is that here the optimal profile is characterized by the equipartition of the energy between the two terms in the functional. Instead, the logarithmic scaling for $s=1/2$ produces no equipartition of the energy: the limit comes only from the non-local part and does not depend on $V$, i.e., any profile is optimal as far as the transition occurs on a layer of order $\Lambda_\epsilon$. This does not happen in our case; both terms of the energy \eqref{functional-G} are equally important.

A related result on Gamma-convergence and optimal profiles was obtained by Alberti-Bellettini \cite{Alberti-Bellettini:anisotropic-model} for anisotropic singularities that are integrable, unlike our $\frac{1}{\abs{x-x'}^{1+2s}}$.\\

The proof of proposition \ref{theorem-convergence1} is given in the following, and it is divided into three parts: first we show compactness, then we give some preliminary results on the optimal profile problem \eqref{kappa}, and in the last part we show the upper and lower bounds of the $\Gamma$-convergence.


\subsection{Compactness}

We start with a (non optimal) bound from below:

\begin{lemma}\label{lemma-bound-from-below}
Let $\delta$ be given such that $0<\delta<(\beta'-\delta')/2$. For every interval $J\subset E$, $\epsilon>0$, and $(v_\ep)\subset L^1(E)$, let $A_\ep$ and $B_\ep$ be the sets of all points $x\in J$ such that $v_\ep(x)\leq \alpha'+\delta$ and $v_\ep(x)\geq \beta'-\delta$, respectively. Set
\be\label{define-quantities}a_\ep:=\frac{\abs{A_\ep\cap J}}{\abs{J}}, \quad b_\ep:=\frac{\abs{B_\ep\cap J}}{\abs{J}}.\ee
Then
\be\label{inequality-bound-below}
G^a_\ep[v_\ep,J]\geq \ep^{1-a}C_s\frac{(\beta'-\alpha'-2\delta)^2}{\abs{J}^{2s-1}}
\left\{1-\frac{1}{(1-a_\epsilon)^{2s-1}}-\frac{1}{(1-b_\ep)^{2s-1}}\right\}  +C_\delta,\ee
where the constant $C_\delta$, does not depend on $\ep$, and $C_s$ only depends on $s$.
\end{lemma}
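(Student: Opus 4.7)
The plan is to throw away the nonnegative potential piece, restrict the Gagliardo double integral to $A_\ep\times B_\ep$ (where the oscillation of $v_\ep$ is bounded below by $\beta'-\alpha'-2\delta$), and then reduce the estimate to a purely geometric lower bound for $\int_A\int_B\abs{x-x'}^{-(1+2s)}\,dx\,dx'$. Concretely, since $V\geq 0$ and $\lambda_\ep>0$, I discard the potential term (taking $C_\delta=0$, or absorbing into it any positive contribution one wishes to retain). By the definitions of $A_\ep$ and $B_\ep$, on $A_\ep\times B_\ep$ and on $B_\ep\times A_\ep$ the integrand of the Gagliardo term is pointwise bounded below by $(\beta'-\alpha'-2\delta)^2\abs{x-x'}^{-(1+2s)}$, so
\bee
G^a_\ep[v_\ep,J]\;\geq\;\frac{2\,\ep^{1-a}(\beta'-\alpha'-2\delta)^2}{D_s}\,K(A_\ep,B_\ep),
\eee
where $K(A,B):=\int_A\int_B\abs{x-x'}^{-(1+2s)}\,dx\,dx'$ and the factor $2$ comes from symmetry in $(x,x')$.

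The heart of the proof is then the purely geometric inequality: for any disjoint measurable $A,B\subset J$ with $\abs{A}/\abs{J}=a$ and $\abs{B}/\abs{J}=b$,
\bee
K(A,B)\;\geq\;\frac{\abs{J}^{1-2s}}{2s(2s-1)}\lp 1-\tfrac{1}{(1-a)^{2s-1}}-\tfrac{1}{(1-b)^{2s-1}}\rp.
\eee
I would prove this in two steps. First, over configurations with fixed measures the infimum of $K$ is attained at the extremal pair $A^\ast=[0,a\abs{J}]$, $B^\ast=[(1-b)\abs{J},\abs{J}]$. This is a rearrangement statement: since the kernel $\abs{x-x'}^{-(1+2s)}$ is strictly decreasing in $\abs{x-x'}$, any swap of a piece of $A$ with a piece of the gap $J\setminus(A\cup B)$ lying farther from $B$ strictly decreases $K$; a symmetric swap in $B$ completes the extremizing step, and lower semicontinuity/compactness over pairs of fixed measure inside $J$ ensures the infimum is achieved.

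Second, for $A^\ast, B^\ast$ a direct computation using the antiderivative $\int(c-x)^{-2s}\,dx=(c-x)^{1-2s}/(2s-1)$, iterated once in $x'$ and once in $x$, yields
\bee
K(A^\ast,B^\ast)=\frac{\abs{J}^{1-2s}}{2s(2s-1)}\lp 1+(1-a-b)^{1-2s}-(1-a)^{1-2s}-(1-b)^{1-2s}\rp.
\eee
Since disjointness forces $a+b\leq 1$ and $1-2s<0$, the term $(1-a-b)^{1-2s}\geq 1$; discarding this nonnegative contribution delivers the geometric inequality, and combining with the initial reduction gives the lemma with the explicit constant $C_s=1/(s(2s-1)D_s)$.

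The main obstacle I expect is the rearrangement step: it is the reverse direction of the classical Riesz rearrangement inequality, which bounds such bilinear integrals from \emph{above} when sets are brought together; here the dual monotonicity — strict decrease as sets are pulled apart — must be established for disjoint $A,B$ that are not a priori intervals. The swap/competitor argument above is the cleanest way to handle this, but one must be careful to exchange mass between $A$ (or $B$) and $J\setminus(A\cup B)$ in a measure-preserving way that respects the left-right ordering relative to the opposite set. Once the extremal placement is in hand, everything else is a Fubini computation.
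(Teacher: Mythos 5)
Your proof discards precisely the two pieces from which the positive constant $C_\delta$ is extracted: the potential term and the $(1-a_\ep-b_\ep)^{1-2s}$ contribution in the Gagliardo bracket. Since $1-2s<0$, both $(1-a_\ep)^{1-2s}$ and $(1-b_\ep)^{1-2s}$ are at least $1$, so the bracket you retain, $1-(1-a_\ep)^{1-2s}-(1-b_\ep)^{1-2s}$, is nonpositive; with $C_\delta=0$ the inequality \eqref{inequality-bound-below} collapses to $G^a_\ep[v_\ep,J]\geq(\mbox{nonpositive quantity})$, which is vacuously true because $G^a_\ep\geq 0$. The compactness proof then applies the lemma on $N$ disjoint subintervals around jump points of the limit, concluding $\liminf_\ep G^a_\ep[v_\ep,E]\geq NC_\delta$; this controls the jump count only when $C_\delta>0$. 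So your version, while literally compatible with the phrasing ``$C_\delta$ does not depend on $\ep$,'' is not the statement the paper actually uses.

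The fix, and what the paper does, is to keep the $(1-a_\ep-b_\ep)^{1-2s}$ term and pair it with the potential term. Write $L:=\abs{J}(1-a_\ep-b_\ep)$ for the length of the transition region $J\setminus(A_\ep\cup B_\ep)$. That piece of the Gagliardo bound contributes $\ep^{1-a}\,c_s(\beta'-\alpha'-2\delta)^2\,L^{1-2s}$ with $c_s>0$, while the potential gives $\lambda_\ep\int_J V(v_\ep)\geq\lambda_\ep m_\delta L$ with $m_\delta:=\inf\{V(t):\alpha'+\delta\leq t\leq\beta'-\delta\}>0$. Minimizing the sum over $L>0$ yields, precisely because of the scaling $\lambda_\ep=\ep^{-(1-a)/(-a)}$, a strictly positive $\ep$-independent constant, and that is $C_\delta$. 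Your parenthetical ``absorbing into it any positive contribution one wishes to retain'' cannot save the argument: without both $L$-dependent pieces there is nothing to minimize and no positive constant to retain. A secondary, minor point is that the paper obtains the extremal configuration directly from the Garsia--Rodemich monotone rearrangement lemma, under which $A_\ep$ and $B_\ep$ become end-intervals of $J$ automatically; your ad hoc swap argument is a plausible but unproved anti-Riesz monotonicity claim and needs more care than you give it. That step is repairable, though; the fatal gap is the discarded balancing, not the route to the bracket.
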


\begin{proof}
First note that the non-local part of the energy decreases under monotone rearrangements (see lemma \ref{lemma-rearrangement} below). Set $J=(a_0,b_0)$. Then, if we define $v^*$ to be the non-decreasing rearrangement of $v$, we have that
\bee\begin{split}
\int_{J^2} &\frac{\abs{v(x')-v(x)}^2}{\abs{x'-x}^{1+2s}}\;dx'dx\\
&\geq\int_{J^2} \frac{\abs{v^*(x')-v^*(x)}^2}{\abs{x'-x}^{1+2s}}\;dx'dx\\
&\geq 2(\beta'-\alpha'-2\delta)^2 \int_{a_0}^{a_0+a_\ep\abs{J}}\int_{b_0-b_\ep\abs{J}}^{b_0}\frac{1}{|x-x'|^{1+2s}}dx'dx\\
& = \frac{2(\beta'-\alpha'-2\delta)^2}{2s(2s-1)\abs{J}^{2s-1}} \left[ 1-\frac{1}{(1-a_\ep)^{2s-1}}-\frac{1}{(1-b_\ep)^{2s-1}}+\frac{1}{(1-a_\ep-b_\ep)^{2s-1}}\right].
\end{split}\eee
On the other hand, let $m_\delta:=\inf\left\{V(t) : \alpha'+\delta\leq t\leq \beta'-\delta\right\}$, we obtain
$$\int_J V(v)dx\geq m_\delta \abs{J}\lp 1-a_\ep-b_\ep\rp,$$
so then
$$G^a_\ep[v_\ep,J]\geq \ep^{1-a}\frac{2(\beta'-\alpha'-2\delta)^2}{2s(2s-1)D_s\abs{J}^{2s-1}}+\lambda_\ep m_\delta \abs{J}\lp 1-a_\ep-b_\ep\rp.$$
Minimizing with respect to $\abs{J}(1-a_\ep-b_\ep)$, and taking into account that $s=\frac{1-a}{2}$ we get
\bee\begin{split}
G^a_\ep[v_\ep,J] & \geq \ep^{1-a}\frac{2(\beta'-\alpha'-2\delta)^2}{2s(2s-1)D_s\abs{J}^{2s-1}}\left[ 1-\frac{1}{(1-a_\ep)^{2s-1}}-\frac{1}{(1-b_\ep)^{2s-1}}\right]\\
& + \frac{2^{\frac{1}{2s}} (2s)^{\frac{2s-1}{2s}} (\beta'-\alpha'-2\delta)^{\frac{2}{2s}} m_\delta^{\frac{2s-1}{2s}}}{2s-1}
\end{split}\eee
for every $0<\delta<(\beta'-\alpha')/2$, and the lemma is proved.
\end{proof}

As we have mentioned, the main ingredient in the proof above is the following rearrangement result. In particular, it  tells us that the infimum of the functional must be attained at a non-decreasing function.

\begin{lemma}[\cite{Garsia-Rodemich:rearrangement}]\label{lemma-rearrangement}
Let
$$I_{\Psi,p}(v)=\int_{0}^1\int_0^1 \Psi\lp\frac{v(x)-v(x')}{p(x-x')}\rp\;dxdx'$$
where $\Psi$ and $p$ are restricted as follows:
\begin{itemize}
\item $\Psi(t)$ is defined and continuous on $\mathbb R$ and $\Psi(t)=\Psi(-t)$ is strictly increasing as $\abs{t}\to\infty$.
\item $p(t)$ is defined an continuous on $(-1,1)$ and $p(u)=p(-u)$ is strictly decreasing as $\abs{t}\to 0$.
\item $\Psi(e^x)$ is convex.
\end{itemize}
We also define the non-decreasing rearrangement of $f$ as
$$f^*(x)=\inf\left\{\lambda : m\{t:f(t)\geq\lambda\}\leq x\right\}.$$
Then
$$I_{\Psi,p}(f^*)\leq I_{\Psi,p}(f).$$
\end{lemma}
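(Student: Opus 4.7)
My plan is to reduce the inequality to the case of step functions by density and then to the case of a single adjacent transposition of cell values by a bubble-sort argument. First I would approximate $f \in L^1(0,1)$ by step functions $f_N$ constant on the equal intervals $A_k = ((k-1)/N, k/N)$, with cell values $c_k = N \int_{A_k} f$. Since $f_N$ and $f_N^*$ share the same distribution of values, and $f_N \to f$, $f_N^* \to f^*$ in $L^1$, a dominated convergence argument (using the evenness and monotonicity of $\Psi$ together with the continuity of $p$ off the diagonal) gives $I_{\Psi,p}(f_N) \to I_{\Psi,p}(f)$ and the same for the rearrangements, so it suffices to prove the inequality for uniform step functions.

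For such a step function, $f^*$ is obtained from $f$ by a permutation of the cell values, and any permutation decomposes into a finite sequence of adjacent transpositions, each of which corrects a local inversion. By induction on the number of inversions, it is enough to prove the following \emph{swap lemma}: if $v$ is a uniform step function with $v|_A = \alpha > \beta = v|_B$ on adjacent equal cells $A$ (left) and $B$ (right), and $\tilde v$ is obtained by swapping these two values, then $I_{\Psi,p}(\tilde v) \leq I_{\Psi,p}(v)$. The contributions from $A\times A$, $B\times B$ and $C\times C$ (with $C=[0,1]\setminus(A\cup B)$) are invariant under the swap, and the $A\times B$ contribution is invariant by the evenness of $\Psi$. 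The remaining change is $2\int_C \mathcal{D}(y)\,dy$, where $\mathcal{D}(y)$ collects the $A$- and $B$-integrals against the fixed exterior value $v(y)$. To control this, I would pair each $y \in L := C \cap [0,\inf A)$ with its mirror image $y^*$ through the midpoint of $A \cup B$ (which lies in $R := C \cap (\sup B, 1]$). Using that $p$ is even and decreasing in $|\cdot|$, the integrand for the pair $(y, y^*)$, after taking logarithms, becomes a four-point expression for $\phi := \Psi \circ \exp$ of the form $\phi(u_1) + \phi(u_4) - \phi(u_2) - \phi(u_3)$ with $u_1 + u_4 = u_2 + u_3$ and the inner pair $(u_1, u_4)$ bracketed by the outer pair $(u_3, u_2)$; this is $\leq 0$ by convexity of $\phi$.

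The main obstacle is the swap lemma itself. The pointwise contribution $\mathcal{D}(y)$ need not have a consistent sign: a direct calculation shows that when $y$ lies close to $A$ on the left the local integrand can actually \emph{increase} under the swap, while for $y$ far to the right of $B$ it decreases. The mirror involution $y \mapsto y^*$ pairs these positive contributions with larger negative ones in a way governed precisely by the log-convexity of $\Psi$ — this is exactly the role of the third hypothesis, which cannot be dropped. Boundary cases in which $y^*$ falls outside $[0,1]$ have to be treated separately using only the evenness and monotonicity of $p$ on the single surviving term, and the discretization step requires enough tail control on $\Psi$ (inherited from its growth hypotheses) to pass to the limit as $N \to \infty$.
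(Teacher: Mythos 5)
The paper offers no proof of this lemma at all---it is imported wholesale from the cited Garsia--Rodemich reference---so your argument has to stand on its own, and it does not: the key step, the swap lemma, is false. Take $\Psi(t)=t^{2}$ and $p(t)=\abs{t}^{(1+2s)/2}$ (exactly the case the paper needs), and the step function on four equal cells with values $(5,2,1,0)$. For equal cells the functional equals $2\sum_{i<j}(c_i-c_j)^{2}K_{\abs{i-j}}$ plus same-cell terms that are unchanged under any permutation of the cell values, where $K_d$ denotes the integral of $p(x-y)^{-2}$ over a pair of cells at distance $d$ and satisfies $K_1>K_2>K_3>0$. Swapping the two middle cells, which carry the adjacent inversion $2>1$, produces $(5,1,2,0)$ and changes the energy by $2\bigl[(16+1+4)-(9+1+1)\bigr]K_1+2\bigl[(9+1)-(16+4)\bigr]K_2=20\,(K_1-K_2)>0$: the inversion-correcting adjacent transposition strictly \emph{increases} $I_{\Psi,p}$. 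In fact, from $(5,2,1,0)$ each of the three possible inversion-correcting adjacent swaps strictly increases the energy, while the monotone rearrangement $(0,1,2,5)$ is just the reflection of the original configuration and has the same energy; so no bubble-sort path can be monotone, and the failure lies in the strategy of reducing to adjacent transpositions, not in any bookkeeping detail.

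The counterexample also pinpoints where the mirror-pairing step breaks down: the paired exterior points carry different values $v(y)\neq v(y^{*})$. In the quadratic case the paired contribution at matched distances $d$ and $d+1$ equals $2\,(K_d-K_{d+1})(\alpha-\beta)\bigl(v(y)-v(y^{*})\bigr)$ per unordered pair, whose sign depends on the surrounding data (here $v(y)=5$, $v(y^{*})=0$ gives a positive contribution), and the four-point inequality for $\phi=\Psi\circ\exp$ that you invoke needs the relation $u_1+u_4=u_2+u_3$ among the logarithmic arguments, which only holds when $v(y)=v(y^{*})$. Two secondary remarks: the hypothesis on $p$, read consistently with the application $p(t)\sim\abs{t}^{(1+2s)/2}$, makes $p$ \emph{increase} with $\abs{t}$ (it decreases as $\abs{t}\to 0$), not decrease in $\abs{\cdot}$ as you state; and the discretization step is not automatic either, since $\Psi$ need not be convex (only $\Psi\circ\exp$ is), so $I_{\Psi,p}(f_N)\to I_{\Psi,p}(f)$ requires justification when the energy is merely finite. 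The statement itself is of course true, but the proof in Garsia--Rodemich is a genuinely global argument about the full rearrangement; any correct proof must be, since, as the example shows, local inversion-correcting exchanges can raise the energy.
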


Now we are ready to prove compactness:

\begin{prop}\label{prop-compactness}
Let $(v_\ep)$ be a sequence in $H^s(E)$ with equibounded energy $G^a_\ep[v_\ep,E]\leq C$. Then $(v_\ep)$ is precompact in $L^1(E)$ and every cluster point belongs to $BV(E,I')$.
\end{prop}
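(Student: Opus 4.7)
My plan follows the Modica--Mortola template adapted to the non-local setting, analogous to Garroni--Palatucci's treatment of \eqref{functional-Palatucci}. The key input is Lemma \ref{lemma-bound-from-below}, which assigns a fixed positive cost $C_\delta$ per transition between the two wells. I begin by using the at-least-linear growth of $V$ at infinity together with the potential bound $\lambda_\ep \int_E V(v_\ep) \leq C$ to obtain $\int_{\{|v_\ep| \geq m\}}|v_\ep|\,dx \leq C'/\lambda_\ep \to 0$; replacing $v_\ep$ by its truncation to $[-m,m]$ alters the $L^1$-distance by $o_\ep(1)$, so I may assume $v_\ep$ is uniformly bounded. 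Fix $\delta \in (0,(\beta'-\alpha')/4)$ and let $A_\ep, B_\ep$ be as in Lemma \ref{lemma-bound-from-below} applied to $J = E$. The same potential bound yields $|E \setminus (A_\ep \cup B_\ep)| \leq C/(\lambda_\ep m_\delta) \to 0$, so $v_\ep$ concentrates near the wells. Define the piecewise constant approximant $\tilde v_\ep := \alpha' \chi_{A_\ep} + \beta' \chi_{E \setminus A_\ep}$; it takes values in $I'$ everywhere and
\bee \|v_\ep - \tilde v_\ep\|_{L^1(E)} \leq \delta|E| + o_\ep(1). \eee

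The core step is to bound the number $N_\ep$ of essential jumps of $\tilde v_\ep$ uniformly in $\ep$. I would select, by a greedy procedure sweeping left to right, a pairwise disjoint family of open intervals $\{J_i\}_{i=1}^{N_\ep} \subset E$, one centered near each jump of $\tilde v_\ep$, with radii chosen so that the local densities
\bee a_\ep^i := \frac{|A_\ep \cap J_i|}{|J_i|}, \qquad b_\ep^i := \frac{|B_\ep \cap J_i|}{|J_i|} \eee
both lie in $[1/4, 3/4]$. On each such $J_i$ the geometric bracket in Lemma \ref{lemma-bound-from-below} is uniformly bounded, while the prefactor $\ep^{1-a}|J_i|^{-(2s-1)}$ is $o_\ep(1)$ since the energy bound forces $|J_i| \gg \Lambda_\ep$ (transitions on a finer scale would make the non-local term blow up, contradicting $G^a_\ep[v_\ep, E] \leq C$). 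Hence $G^a_\ep[v_\ep, J_i] \geq C_\delta/2$ for $\ep$ small, and by disjointness
\bee C \geq G^a_\ep[v_\ep, E] \geq \sum_{i=1}^{N_\ep} G^a_\ep[v_\ep, J_i] \geq \frac{N_\ep C_\delta}{2}, \eee
giving $N_\ep \leq 2C/C_\delta$ and a uniform BV bound on $\tilde v_\ep$.

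Standard BV compactness then extracts a subsequence $\tilde v_\ep \to \tilde v^\delta$ in $L^1(E)$ with $\tilde v^\delta \in BV(E, I')$, and a diagonal argument sending $\delta \to 0$, combined with the $L^1$-approximation above, produces an $L^1$-convergent subsequence of $v_\ep$ whose limit lies in $BV(E, I')$. The main obstacle will be the construction of the transition intervals $J_i$, especially when the jumps of $\tilde v_\ep$ cluster on scales comparable to $\Lambda_\ep$: the greedy algorithm must simultaneously guarantee disjointness and balanced densities on each $J_i$, which in turn requires an a priori lower bound on the separation of transitions derived from the non-local energy cost of rapid oscillations.
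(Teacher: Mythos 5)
You take a genuinely different route from the paper. The paper first extracts a weak $L^1$ limit via the theory of Young measures, deduces the existence of $\theta:E\to[0,1]$ with $\nu_x=\theta(x)\delta_{\alpha'}+(1-\theta(x))\delta_{\beta'}$, and then applies Lemma \ref{lemma-bound-from-below} on a family of \emph{fixed}, $\ep$-independent intervals $J_j$ surrounding points where the approximate limit of $\theta$ lies strictly between $0$ and $1$; since those intervals are fixed and $\ep^{1-a}\to 0$, the first term of \eqref{inequality-bound-below} vanishes automatically and one keeps only $C_\delta>0$. You instead build a piecewise-constant approximant $\tilde v_\ep$ and try to count its jumps directly, via $\ep$-dependent intervals $J_i$ of balanced density.

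This is exactly where the gap is. The first term in Lemma \ref{lemma-bound-from-below} is
$$\ep^{1-a}C_s\frac{(\beta'-\alpha'-2\delta)^2}{\abs{J}^{2s-1}}\left\{1-\frac{1}{(1-a_\ep)^{2s-1}}-\frac{1}{(1-b_\ep)^{2s-1}}\right\},$$
and the curly bracket is \emph{negative} whenever $a_\ep,b_\ep\in(0,1)$ (it already equals $-1$ at $a_\ep=b_\ep=0$ and decreases further). To conclude $G^a_\ep[v_\ep,J_i]\geq C_\delta/2$ you need the prefactor $\ep^{1-a}\abs{J_i}^{-(2s-1)}$ to be $o_\ep(1)$, i.e.\ $\abs{J_i}\gg\Lambda_\ep$ (since $\Lambda_\ep^{2s-1}=\ep^{1-a}$). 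Your justification that ``transitions on a finer scale would make the non-local term blow up'' gives at best $\abs{J_i}\gtrsim\Lambda_\ep$, not $\abs{J_i}\gg\Lambda_\ep$: $\Lambda_\ep$ is precisely the optimal-profile scale, at which a single transition costs $O(1)$ (indeed about $\kappa_s$) without violating $G^a_\ep[v_\ep,E]\leq C$. When $\abs{J_i}\sim\Lambda_\ep$, the prefactor is of order one, the first term is a fixed negative quantity, and the lemma yields only $G^a_\ep[v_\ep,J_i]\geq C_\delta-\text{const}$, which may well be negative. Thus the per-transition cost $C_\delta/2$ is not established on the intervals you actually need, and the bound on $N_\ep$ does not follow. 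The obstacle you flag — clustering of jumps on scales $\sim\Lambda_\ep$ — is not a technical nuisance in your scheme but the exact point where it breaks.

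A secondary issue: your jump bound is $N_\ep\leq 2C/C_\delta$, and $C_\delta\to 0$ as $\delta\to 0$ (since $m_\delta=\inf_{[\alpha'+\delta,\beta'-\delta]}V\to 0$), so the BV norms of the limits $\tilde v^\delta$ are not uniform in $\delta$ and the diagonal limit is not evidently in $BV(E,I')$. The paper sidesteps this too: once the Young measure is supported on $I'$, a \emph{single} fixed $\delta>0$ and a finite family of fixed intervals already force the set $S$ where $\theta$ is neither $0$ nor $1$ to be finite, hence $\theta\in BV(E,\{0,1\})$ and the cluster point is in $BV(E,I')$.
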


\begin{proof}
Once we have the estimate from lemma \ref{lemma-bound-from-below}, it is a standard argument that we rewrite here for completeness (see theorem 4.4, part (i), in \cite{Alberti-Bouchitte-Seppecher:Phase-transition}), and to show how the estimate \eqref{inequality-bound-below} is used. First, the condition $G^a_\ep[v_\ep,E]\leq C$ implies that
\be\label{term-vanishing}
\int_{E} V(v_\ep)dx\leq C\lambda_\ep^{-1},\ee
and we obtain that $V(v_\ep)\to 0$ in $L^1(E)$ when $\ep \to 0$.
Thanks to the growth assumption on $V$, $(v_\ep)$ is weakly relatively compact in $L^1(E)$, and some subsequence, still denoted by $(v_\ep)$, converges weakly in $L^1(E)$ to some $v$.

In order to prove that this convergence is strong in $L^1(E)$ and that $v\in BV(E,I')$, we need to use the properties of Young measures (see the notes  \cite{Valadier:Young-measures}). Let $\nu_x$ be the Young measure associated with $(v_\ep)$. Since $V$ is a non-negative continuous function in $\mathbb R$, then
$$\int_E \int_{\mathbb R} V(t)d\nu_x(t)\leq \liminf_{\ep\to 0} \int_E V(v_\ep)\; dx.$$
Hence, by \eqref{term-vanishing} we have that
$$\int_{\mathbb R} V(t)d\nu_x(t)=0,\quad \mbox{a.e. }x\in E.$$
Since $V(t)=0$ if and only if $t=\alpha'$ or $t=\beta'$, the probability measure $\nu_x$ is supported on $I'=\{\alpha',\beta'\}$ for a.e. $x$. In other words, there exists a function $\theta:E\to[0,1]$ such that
$$\nu_x(dt)=\theta(x)\delta_{\alpha'}(dt)+(1-\theta)(x)\delta_{\beta'}(dt), \quad x\in E,$$
and
$$v(x)=\theta(x)\alpha'+(1-\theta(x))\beta', \quad x\in E.$$

It remains to prove that $\theta$ belongs to $BV(E,\{0,1\})$. Let us consider the set $S$ of the points where the approximate limits of $\theta$ is neither $0$ nor $1$. For every $N\leq \mathcal H^0(S)$, we can find $N$ disjoint intervals $\{J_j\}_{j=1,\ldots,N}$ such that $J_j\cap S\neq 0$ and such that the quantities $a_\ep^j$ and $b^j_\ep$ defined by \eqref{define-quantities} replacing $J$ by $J_j$ satisfy
$$a_\ep^j \to a_j\in(0,1)\quad\mbox{and}\quad b_\ep^j \to b_j\in(0,1)\quad \mbox{as }\ep\to 0.$$
Then we can apply lemma \ref{lemma-bound-from-below} in the interval $J_j$ and, taking the limit as $\ep \to 0$ in the inequality \eqref{inequality-bound-below} we obtain that
$$\liminf_{\ep\to 0} G^a_\ep[v_\ep,J_j]\geq C_\delta.$$
Finally, we use the sub-additivity of $G^a_\ep$, and we get
$$\liminf_{\ep \to 0} G^a_\ep[v_\ep,E]\geq \sum_{j=1}^N \liminf_{\ep \to 0} G^a_\ep[v_\ep,J_j]\geq N C_\delta.$$
Since $(v_\ep)$ has equi-bounded energy, this implies that $S$ is a finite set. Hence, $\theta\in BV(E,\{0,1\})$ and the proof of the compactness for $G^a_\ep$ is complete.
\end{proof}

\subsection{The optimal profile}\label{subsection-profile}

We remind the reader that we have set the following optimal profile problem
\be\label{optimal-profile-problem}
\kappa_s:=\inf \left\{ G_1^a[v,\mathbb R] :
v\in H^s(\mathbb R),\; \lim_{x\to -\infty} v(x)=\alpha',\; \lim_{x\to +\infty} v(x)=\beta'\right\}.\ee
We would like to show that this infimum is attained.

\begin{lemma}
Let $0<\delta<\frac{1}{2}(\beta'-\alpha')$ and set
$$m_\delta:=\min\{ V(t) : \alpha'+\delta\leq t \leq \beta'-\delta\}.$$
Then, for any $v\in H^s(\R)$ with  $\lim_{x\to -\infty} v(x)=\alpha'$ and $\lim_{x\to +\infty} v(x)=\beta'$, it holds that
$$G^a_1[v,\R]\geq \frac{(2s)^{\frac{2s-1}{2s}} }{2s-1}(\beta'-\alpha'-2\delta)^{\frac{2}{2s}}m_\delta^{\frac{2s-1}{2s}}>0.$$
\end{lemma}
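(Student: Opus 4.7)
My plan is to follow exactly the scheme of Lemma \ref{lemma-bound-from-below} but adapted to the whole real line, exploiting the fact that the transition region between the two wells necessarily has some positive length $L$, and then balancing the non-local term (which prefers small $L$) against the potential term (which prefers small $L$ in the sense that only the middle region contributes).

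First, I would reduce to non-decreasing profiles. Since $s>1/2$, Sobolev embedding gives that $v$ is continuous, so the (super)level sets are open. Applying the non-decreasing rearrangement of Lemma \ref{lemma-rearrangement} (in a version suited to $\R$, obtained by rearranging on $[-R,R]$ keeping boundary values and letting $R\to\infty$, using that $v\to\alpha',\beta'$ at $\mp\infty$), both terms of $G^a_1$ only decrease: the non-local one by the lemma, and the potential one is actually invariant since rearrangement preserves the distribution of $v$. So WLOG $v$ is non-decreasing with the stated limits.

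Next, set
\bee
a_0:=\sup\{x\in\R:v(x)\le\alpha'+\delta\},\qquad b_0:=\inf\{x\in\R:v(x)\ge\beta'-\delta\},
\eee
and $L:=b_0-a_0$. Monotonicity and the limits force $-\infty<a_0\le b_0<+\infty$, and moreover $v(x)\in[\alpha'+\delta,\beta'-\delta]$ on $[a_0,b_0]$. Consequently,
\bee
\int_\R V(v)\,dx\ge m_\delta L.
\eee
For the non-local part, on $(-\infty,a_0]\times[b_0,+\infty)$ we have $v(x')-v(x)\ge \beta'-\alpha'-2\delta$, so using symmetry in $(x,x')$ and the explicit computation
\bee
\int_{-\infty}^{a_0}\!\int_{b_0}^{+\infty}\frac{dx'\,dx}{(x'-x)^{1+2s}}=\frac{1}{2s(2s-1)L^{2s-1}},
\eee
we obtain
\bee
\frac{1}{D_s}\int_{\R^2}\frac{\abs{v(x')-v(x)}^2}{\abs{x'-x}^{1+2s}}\,dx'dx\ge \frac{(\beta'-\alpha'-2\delta)^2}{D_s\,s(2s-1)L^{2s-1}}.
\eee
Combining and writing $C_1:=(\beta'-\alpha'-2\delta)^2/[D_s s(2s-1)]$ and $C_2:=m_\delta$, we have $G^a_1[v,\R]\ge C_1 L^{-(2s-1)}+C_2 L$. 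Minimizing over $L>0$ gives $L^*=[(2s-1)C_1/C_2]^{1/(2s)}$ and, after a short simplification using $1+(2s-1)=2s$, the minimum value is $2s(2s-1)^{-(2s-1)/(2s)}C_1^{1/(2s)}C_2^{(2s-1)/(2s)}$; inserting $C_1,C_2$ (with the normalization chosen for $D_s$ in \eqref{D_s}) reproduces the stated constant.

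The main obstacle I anticipate is the rearrangement step, since Lemma \ref{lemma-rearrangement} is stated on $(0,1)$ and we need the analogous statement on $\R$ with prescribed distinct limits at $\pm\infty$. This is standard but must be justified (e.g.\ by a truncation argument, rearranging on $[-R,R]$ with boundary values $v(\pm R)$ frozen and then letting $R\to\infty$, using that the tails contribute negligibly to the non-local seminorm). An alternative that avoids rearrangement is to pick any $T$ with $v<\alpha'+\delta$ on $(-\infty,-T)$ and $v>\beta'-\delta$ on $(T,+\infty)$ and repeat the estimate with $L=2T$ in place of $b_0-a_0$; this still yields a positive lower bound, but without rearrangement the link between this $T$ and the quantity $|M|$ controlled by the potential term is weaker, so the optimal constant is harder to extract.
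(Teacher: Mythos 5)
Your plan follows essentially the same line of argument as the paper's proof: rearrange to a non-decreasing profile, bound $\int V$ below by $m_\delta$ times the length of the transition region, bound the nonlocal term below by its contribution from the two far-apart half-lines, and minimize over the free length. Two points are worth flagging.

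First, on the rearrangement step: the paper avoids the delicacy you identify (rearrangement on the unbounded line) by first \emph{truncating} $v$ to $v_\delta := (v\vee(\alpha'+\delta))\wedge(\beta'-\delta)$. Since $v\to\alpha',\beta'$ at $\mp\infty$, the truncated $v_\delta$ is constant outside a compact interval $[x_{\alpha'},x_{\beta'}]$, so its non-decreasing rearrangement is well defined and stable with respect to the interval used, and Lemma \ref{lemma-rearrangement} applies directly. Truncation only decreases the nonlocal seminorm and leaves $\int V(v)$ untouched, so one never needs to rearrange $v$ itself nor argue that $\int V$ is preserved by rearrangement. Your $[-R,R]$ limiting argument is workable but more fragile, and the paper's order of operations (truncate, then rearrange the truncation) is the cleaner way to make the step rigorous.

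Second, on the constant: your bookkeeping keeps the $1/D_s$ normalization and the symmetry factor $2$ in the double integral, whereas the paper's estimate (see \eqref{profile1}) drops the $1/D_s$ and integrates only over $(-\infty,x^*_{\alpha'}]\times[x^*_{\beta'},+\infty)$ rather than its symmetrization. As a result your $C_1$ differs from the paper's by a factor $2/D_s$, and the optimized lower bound comes out as $(2/D_s)^{1/(2s)}$ times the one stated in the lemma. Both are valid strictly positive lower bounds, but the claim that inserting $C_1,C_2$ ``reproduces the stated constant'' is not accurate as written unless $D_s=2$. Since the only thing actually used downstream is strict positivity (Corollary \ref{cor-sharp-halfplane} onward uses the fact $\kappa_s>0$, not its exact value through this lemma), this discrepancy is harmless, but it should be acknowledged rather than glossed over.
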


\begin{proof}
It is analogous to proposition 3.1. in \cite{Garroni-Palatucci:singular-perturbation-fractional-norm} for the super-quadratic case, and follows the lines of lemma \ref{lemma-bound-from-below}. Fix $\delta>0$ and fix $v\in H_{loc}^s(\mathbb R)$ such that $\lim_{x\to -\infty} v(x)=\alpha'$, $\lim_{x\to +\infty} v(x)=\beta'$ and $G^a_1[v,\mathbb R]<\infty$. Define
$$I_{\alpha'}:=\{ x\in\mathbb R : v(x)\leq \alpha'+\delta\}\quad \mbox{and}\quad I_{\beta'}:=\{ x\in\mathbb R : v(x)\geq \beta'-\delta\},$$
 and also $J_\delta:=\mathbb R\backslash (I_{\alpha'}\cup I_{\beta'})$. Notice that $I_{\alpha'}$, $I_{\beta'}$ and $J_\delta$ are non-empty, and that $J_\delta$ is bounded, for every fixed $\delta\in (0,(\beta'-\alpha')/2)$. Consider the truncated function
$$v_\delta(x):=(v(x)\vee (\alpha'+\delta))\wedge (\beta'-\delta)\quad\mbox{for every }x\in\mathbb R.$$
It is easy to see that the non local energy decreases under truncation and then it follows that
\be\label{profile1}
\begin{split}
G^a_1[v,\mathbb R]
& \geq \int_{\R\times \R}\frac{\abs{v_\delta(x)-v_\delta(x')}^2}{\abs{x-x'}^{1+2s}}dx dx'+\int_\R V(v)dx \\
& \geq \int_{\R\times \R}\frac{\abs{v_\delta(x)-v_\delta(x')}^2}{\abs{x-x'}^{1+2s}}dx dx'+m_\delta \abs{J_\delta}.
\end{split}\ee
We set
$$x_{\alpha'}:=\min\{ x: v(x)>\alpha'+\delta\},\quad x_{\beta'}:=\max\{ x: v(x)<\beta'-\delta\}$$
Since $v_\delta(x)=\alpha'+\delta$ for every $x<x_{\alpha'}$ and $v_\delta(x)=\beta'-\delta$ for every $x>x_{\beta'}$, for any interval $J\supset [x_{\alpha'},x_{\beta'}]$ the non-decreasing rearrangement $v_\delta^*$ of $v_\delta$ in $J$ does not depend on $J$. Because the rearrangement decreases the non-local energy (lemma \ref{lemma-rearrangement}), we have that
\bee\begin{split}
\int_{\R\times \R}\frac{\abs{v_\delta(x)-v_\delta(x')}^2}{\abs{x-x'}^{1+2s}}dx dx'
&\geq\int_{\R\times R}\frac{\abs{v^*_\delta(x)-v^*_\delta(x')}^2}{\abs{x-x'}^{1+2s}}dx dx' \\
&\geq\int_{-\infty}^{x_{\alpha'}^*} \int_{x_{\beta'}^*}^{+\infty}\frac{\abs{v^*_\delta(x)-v^*_\delta(x')}^2}{\abs{x-x'}^{1+2s}}dx dx',
\end{split}\eee
where $x_{\alpha'}^*:=\sup\{x : v_\delta^*(x)=\alpha'+\delta\}$ and $x_{\beta'}^*:=\inf\{x : v_\delta^*(x)=\beta'-\delta\}$.
So from \eqref{profile1} we deduce that
\bee\begin{split}
G^a_1[v,\mathbb R] &
\geq (\beta'-\alpha'-2\delta)^2 \int_{-\infty}^{x_{\alpha'}^*}\int_{x_{\beta'}^*}^{+\infty}\frac{1}{\abs{x-x'}^{1+2s}}dx dx'+m_\delta\abs{J_\delta}\\
& =\frac{(\beta'-\alpha'-2\delta)^2}{2s(2s-1)\abs{J_\delta}^{2s-1}}+m_\delta\abs{J_\delta}.
\end{split}\eee
Minimizing with respect to $\abs{J_\delta}$ we obtain
$$G^a_1[v,\R]\geq \frac{(2s)^{\frac{2s-1}{2s}} }{2s-1}(\beta'-\alpha'-2\delta)^{\frac{2}{2s}}m_\delta^{\frac{2s-1}{2s}}>0,$$
and the lemma is proved.
\end{proof}

\begin{cor}
The constant $\kappa_s$ is strictly positive.
\end{cor}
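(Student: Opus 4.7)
The corollary is an immediate consequence of the previous lemma. The plan is as follows: since $\alpha' < \beta'$, I can fix any $\delta \in (0, (\beta'-\alpha')/2)$. For this $\delta$, by the hypotheses on $V$ (continuous, non-negative, vanishing only at $\alpha'$ and $\beta'$), the quantity
$$m_\delta = \min\{V(t) : \alpha' + \delta \leq t \leq \beta' - \delta\}$$
is a strictly positive constant, because the minimum is attained on the compact interval $[\alpha'+\delta, \beta'-\delta]$ which is disjoint from the zero set $\{\alpha',\beta'\}$ of $V$.

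Now, the previous lemma asserts that for every admissible profile $v \in H^s(\mathbb{R})$ with $\lim_{x\to-\infty} v = \alpha'$ and $\lim_{x\to+\infty} v = \beta'$, one has
$$G^a_1[v,\mathbb{R}] \geq \frac{(2s)^{(2s-1)/(2s)}}{2s-1}(\beta'-\alpha'-2\delta)^{2/(2s)} m_\delta^{(2s-1)/(2s)}.$$
The right-hand side does not depend on $v$, and it is strictly positive: all three factors $(2s)^{(2s-1)/(2s)}/(2s-1)$, $(\beta'-\alpha'-2\delta)^{2/(2s)}$ and $m_\delta^{(2s-1)/(2s)}$ are strictly positive (the exponent $(2s-1)/(2s)$ is positive because $s > 1/2$, and $\beta'-\alpha'-2\delta > 0$ by our choice of $\delta$).

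Taking the infimum over all admissible $v$ therefore yields
$$\kappa_s \geq \frac{(2s)^{(2s-1)/(2s)}}{2s-1}(\beta'-\alpha'-2\delta)^{2/(2s)} m_\delta^{(2s-1)/(2s)} > 0,$$
which is the desired conclusion. There is no real obstacle here; the only thing to check carefully is that $m_\delta > 0$, which follows from the assumed zero-set structure of $V$, and the sign of the exponents, which is controlled by the standing assumption $1/2 < s < 1$ (equivalently $-1 < a < 0$).
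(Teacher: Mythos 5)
Your argument is correct and is precisely what the paper intends: the corollary is stated without proof as an immediate consequence of the preceding lemma, whose $v$-independent positive lower bound you correctly pass to the infimum, after verifying $m_\delta>0$ via compactness and the zero set of $V$.
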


We also need an auxiliary optimal profile problem:
for every $T>0$, we minimize
$$\kappa_s^T:=\inf\left\{ G_1^a[v,\mathbb R] : \quad v\in H^s(\mathbb R), \quad v(x)=\alpha'\;\forall x\leq T,\quad v(x)=\beta' \;\forall x\geq T\right\}.$$

\begin{prop}\label{prop-optimal-modified}
The minimum is achieved by a function $\phi^T\in H^s(\R)$ which is non-decreasing and satisfies $\alpha'\leq \phi^T\leq \beta'$. Moreover the sequence $\kappa_s^T$ is non-increasing in $T$ and $\lim_{T \to +\infty} \kappa_s^T=\kappa_s$.
\end{prop}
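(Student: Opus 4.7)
The plan is to apply the direct method for existence, an inclusion of admissible classes for monotonicity, and a cut-off construction for the limit. Throughout I interpret the admissibility conditions as $v(x)=\alpha'$ for $x\le -T$ and $v(x)=\beta'$ for $x\ge T$.

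For existence, I would take a minimizing sequence $(v_n)$ for $\kappa_s^T$ and first truncate it to $[\alpha',\beta']$ via $v_n\mapsto(v_n\vee\alpha')\wedge\beta'$. This preserves the constant tails, reduces the nonlocal integrand pointwise (projection onto an interval is $1$-Lipschitz), and reduces the potential term since $V(\alpha')=V(\beta')=0\le V(v_n)$. I would then apply the non-decreasing rearrangement of Lemma \ref{lemma-rearrangement} on each interval $[-R,R]$ with $R>T$. Because $v_n\equiv\alpha'$ on $[-R,-T]$ (a set of measure $R-T$), the rearranged function still equals $\alpha'$ on $[-R,-T]$, and symmetrically equals $\beta'$ on $[T,R]$; letting $R\to\infty$ produces non-decreasing minimizing functions still in the admissible class, with no larger energy. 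Helly's selection theorem then extracts a subsequence converging pointwise to a non-decreasing limit $\phi^T\in[\alpha',\beta']$ with the correct constant tails, and Fatou's lemma applied both to the nonlocal integrand and to $V\ge 0$ yields $G_1^a[\phi^T,\R]\le\liminf_n G_1^a[v_n,\R]=\kappa_s^T$, so $\phi^T$ is a minimizer.

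Monotonicity is immediate: for $T_1<T_2$, any competitor $v$ for $\kappa_s^{T_1}$ satisfies $v=\alpha'$ on $(-\infty,-T_1]\supset(-\infty,-T_2]$ and analogously on the right, so it is also a competitor for $\kappa_s^{T_2}$; hence $\kappa_s^{T_2}\le\kappa_s^{T_1}$. For the limit, the inequality $\kappa_s^T\ge\kappa_s$ is trivial because the constant tails force the correct limits at $\pm\infty$. For the reverse, I would fix $\eta>0$ and choose a non-decreasing quasi-minimizer $w$ for $\kappa_s$ with $G_1^a[w,\R]\le\kappa_s+\eta$; since $w$ is non-decreasing with limits $\alpha',\beta'$ at $\mp\infty$, we have $w(-T)\to\alpha'$ and $w(T)\to\beta'$ as $T\to\infty$. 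Define $w_T$ to equal $\alpha'$ on $(-\infty,-T-1]$, $\beta'$ on $[T+1,\infty)$, $w$ on $[-T,T]$, and linearly interpolate across $[-T-1,-T]$ and $[T,T+1]$, so that $w_T$ is admissible for $\kappa_s^{T+1}$. The potential contribution converges to $\int V(w)$ since $V(w_T)=0$ off $[-T-1,T+1]$, while $V(w_T)$ and $V(w)$ both tend uniformly to zero on the two transition strips by continuity of $V$ with $V(\alpha')=V(\beta')=0$.

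The main obstacle is the convergence of the nonlocal part. I would polarize
\[
I(w_T)-I(w)\;=\;[w_T-w]_{H^s(\R)}^2\;+\;2\,\langle w_T-w,\,w\rangle_{H^s(\R)}
\]
in the Gagliardo inner product and reduce the claim to showing $[w_T-w]_{H^s(\R)}\to 0$ as $T\to\infty$. The function $w_T-w$ vanishes on $[-T,T]$, is uniformly bounded on the tails by $\max(|w(-T)-\alpha'|,|\beta'-w(T)|)\to 0$, and is supported in a set retreating to $\pm\infty$. Splitting the Gagliardo double integral into near-diagonal and off-diagonal contributions and applying dominated convergence against an integrable majorant extracted from $w\in H^s(\R)$ gives the desired vanishing. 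This passage from pointwise smallness of $w_T-w$ on a shifted tail to smallness of its $H^s$ seminorm is the technical crux of the argument; once it is secured, the polarization identity delivers $G_1^a[w_T,\R]\to G_1^a[w,\R]\le\kappa_s+\eta$, and since $\eta>0$ is arbitrary, combined with monotonicity this gives $\lim_{T\to\infty}\kappa_s^T=\kappa_s$.
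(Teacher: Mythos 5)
Your proposal reconstructs the standard argument that the paper simply delegates to Garroni--Palatucci (their Proposition~3.2): direct method plus truncation and monotone rearrangement for existence and monotonicity of the minimizer; nested admissible classes for the monotonicity of $\kappa_s^T$ in $T$; and a tail cut-off of a quasi-minimizer for $\kappa_s$ to close the limit. This is the right route and the skeleton is sound. Two points deserve more care, though, because they are exactly where the difficulty sits. First, the appeal to ``dominated convergence against an integrable majorant extracted from $w\in H^s(\R)$'' for $[w_T-w]_{H^s}\to 0$ glosses over the transition strips $[T,T+1]$ and $[-T-1,-T]$: there the integrand is not dominated by a $T$-independent majorant, and you genuinely need the monotonicity of $w$ to bound $|w(x')-w(T)|\le|w(x')-w(x)|$ for $x\le T\le x'$, which lets you absorb those near-diagonal contributions into the vanishing tail $\int_{\{|x|>T\ \text{or}\ |x'|>T\}}\tfrac{|w(x)-w(x')|^2}{|x-x'|^{1+2s}}\to 0$; the remaining off-diagonal and interpolation pieces are $O(\delta_T^2)$ as you indicate. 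Second, the ``WLOG $w$ non-decreasing'' for a quasi-minimizer of $\kappa_s$ cannot cite Proposition~\ref{prop-optimum-profile} (which comes later and depends on this one); it needs its own small argument via Lemma~\ref{lemma-rearrangement} applied on $[-R,R]$ with $R\to\infty$, using that $v\to\alpha',\beta'$ at $\mp\infty$ so the mismatch at $\pm R$ tends to zero. With those two points filled in, the proof is complete and matches the cited argument in spirit.
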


\begin{proof}
This is essentially proposition 3.2 in \cite{Garroni-Palatucci:singular-perturbation-fractional-norm}.
\end{proof}

Then we have that

\begin{prop}\label{prop-optimum-profile}
The infimum for the optimum profile problem \eqref{optimal-profile-problem} is achieved by a non-decreasing function $\phi$ satisfying $\alpha'\leq \phi\leq \beta'$.
\end{prop}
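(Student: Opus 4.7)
The strategy is to obtain $\phi$ as the limit, up to translations, of the constrained minimizers $\phi^{T_n}$ from Proposition \ref{prop-optimal-modified} as $T_n\to+\infty$. First I would record the basic features of this sequence: each $\phi^{T_n}$ is non-decreasing with $\alpha'\leq \phi^{T_n}\leq \beta'$, and $G_1^a[\phi^{T_n},\mathbb R]=\kappa_s^{T_n}$, which by Proposition \ref{prop-optimal-modified} is bounded and converges to $\kappa_s$. Since $s>1/2$, the Sobolev embedding $H^s(\R)\hookrightarrow C^{0,s-1/2}(\R)$ makes every $\phi^{T_n}$ continuous, so by monotonicity and the prescribed boundary values $\alpha',\beta'$ it attains the intermediate value $(\alpha'+\beta')/2$ at some point $x_n\in\R$. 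Because both terms of $G_1^a$ are translation invariant, I may replace $\phi^{T_n}$ by its translate $\tilde\phi^{T_n}(\cdot):=\phi^{T_n}(\cdot+x_n)$ without changing the energy, so that $\tilde\phi^{T_n}(0)=(\alpha'+\beta')/2$.

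The next step is compactness. Each $\tilde\phi^{T_n}$ is non-decreasing and takes values in the fixed compact interval $[\alpha',\beta']$, so by Helly's selection theorem there is a subsequence (not relabelled) converging pointwise at every continuity point of the limit to a non-decreasing function $\phi:\R\to[\alpha',\beta']$ with $\phi(0)=(\alpha'+\beta')/2$. Applying Fatou's lemma to the non-negative integrand of the non-local term and, separately, to $V(\tilde\phi^{T_n})$, one obtains
\bee
G_1^a[\phi,\R]\;\leq\;\liminf_{n\to\infty} G_1^a[\tilde\phi^{T_n},\R]\;=\;\lim_{n\to\infty}\kappa_s^{T_n}\;=\;\kappa_s.
\eee

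It remains to verify that $\phi$ is admissible in \eqref{optimal-profile-problem}, i.e.\ that $\phi\in H^s(\R)$ with $\phi(-\infty)=\alpha'$ and $\phi(+\infty)=\beta'$. The bound $G_1^a[\phi,\R]\leq \kappa_s<\infty$ already yields $\phi\in H^s(\R)$ (together with the $L^\infty$ bound). Because $\phi$ is non-decreasing and bounded, the one-sided limits $\phi(\pm\infty)$ exist. Since $V$ is continuous, strictly positive outside $\{\alpha',\beta'\}$, and $\int_\R V(\phi)<\infty$, if $\phi(-\infty)\notin\{\alpha',\beta'\}$ then $V(\phi)$ would be bounded below by a positive constant on a half-line, contradicting integrability; hence $\phi(-\infty)\in\{\alpha',\beta'\}$ and similarly $\phi(+\infty)\in\{\alpha',\beta'\}$. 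The normalization $\phi(0)=(\alpha'+\beta')/2$, together with monotonicity, then forces $\phi(-\infty)=\alpha'$ and $\phi(+\infty)=\beta'$. Thus $\phi$ is a competitor for \eqref{optimal-profile-problem}, so $G_1^a[\phi,\R]\geq \kappa_s$, and combining this with the Fatou bound gives equality. The limit function $\phi$ is the required non-decreasing minimizer.

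The main obstacle is the lack of compactness of $\R$: without normalization the minimizing sequence $\phi^{T_n}$ could ``escape to infinity'', with the transition region drifting off. This is handled precisely by the translation invariance of $G_1^a$ in Step 2, which pins the half-height of each profile at the origin and lets Helly's theorem deliver a limit whose boundary values at $\pm\infty$ can be identified. A secondary technical point is that one cannot pass to the limit in the non-local energy by dominated convergence, but since every integrand is non-negative, Fatou's lemma suffices.
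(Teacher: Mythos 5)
Your proof is correct and takes essentially the same route as the paper: approximate by the constrained minimizers $\phi^T$ of Proposition \ref{prop-optimal-modified}, extract a pointwise limit by Helly's selection theorem, and pass to the limit in the energy by Fatou's lemma. The paper presents this in two sentences and defers the details to Garroni--Palatucci's Proposition 3.3. Your write-up is more careful on two points the paper glosses over. First, you normalize $\phi^{T_n}$ by the translation that pins its half-height at the origin; this is genuinely needed, since the constraint defining $\kappa_s^T$ only requires the transition to lie somewhere in $[-T,T]$, and without recentring the transition layer could drift to $\pm\infty$ and the Helly limit could be one of the constants $\alpha'$ or $\beta'$, which is not admissible. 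The translation invariance of $G_1^a$ is exactly what makes this normalization free. Second, you give the argument (finiteness of $\int_\R V(\phi)$, monotonicity, and the normalization) that forces $\phi(-\infty)=\alpha'$ and $\phi(+\infty)=\beta'$, so that $\phi$ is a bona fide competitor in \eqref{optimal-profile-problem} and the reverse inequality $G_1^a[\phi,\R]\geq\kappa_s$ closes the argument. These are precisely the steps the paper's telegraphic proof leaves implicit, so your version is a completed form of the same argument rather than a different approach.

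One small cautionary remark: Helly's theorem gives pointwise convergence at the continuity points of the limit, not everywhere, so when you invoke $\phi(0)=(\alpha'+\beta')/2$ you should argue via monotonicity at nearby continuity points (as you do implicitly): for any continuity point $x<0$ of $\phi$ one has $\phi(x)\leq(\alpha'+\beta')/2$, and for $x>0$ one has $\phi(x)\geq(\alpha'+\beta')/2$, which is all that is needed to conclude $\phi(-\infty)<\phi(+\infty)$ and hence to identify the two limits. Since the exceptional set is at most countable, Fatou's lemma still applies. This is a presentational point, not a gap.
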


\begin{proof}
It is a straightforward modification of proposition 3.3 in \cite{Garroni-Palatucci:singular-perturbation-fractional-norm}.
Let $T>0$ and let $\phi^T$
be a non-decreasing minimizer for $\kappa_s^T$. Since the
functions $\phi^T$ are monotone and bounded, by Helly's theorem, there exist a subsequence
$\phi^{T_k}$ of $\phi^T$ and a non-decreasing function $\phi$, bounded by $\alpha'$ and $\beta'$, such
that $\phi^{T_k}$ converges pointwise in $\R$ to $\phi$. By Fatou's lemma and proposition \ref{prop-optimal-modified} we
also have
$$\int_{\R\times\R} \frac{\abs{\phi(x)-\phi(x')}^2}{\abs{x-x'}^{1+2s}}\;dxdx'+\int_\R V(\phi)\;dx\leq \lim_{k\to \infty} \kappa_s^{T_k}=\kappa_s.$$
This $\phi$ is the minimizer we seek.
\end{proof}


\subsection{Lower and upper bound inequalities}

The proof of the upper and lower bound inequalities of proposition \ref{theorem-convergence1} use the optimal profile obtained in proposition \ref{prop-optimum-profile}.
The lower bound inequality is a consequence of the following:

\begin{prop}\label{prop-lower-bound}
Let $J$ be an open interval of $\mathbb R$. Let $(v_\ep)$ be a non-decreasing sequence in $H^s(J)$ and assume that there exist $\bar a,\bar b\in J$, $\bar a<\bar b$, such that for every $\delta>0$, there exists $\ep_\delta$ such that
$$v_\ep(\bar a)\leq \alpha'+\delta\quad\mbox{and}\quad v_\ep(\bar b)\geq \beta'-\delta,\quad \forall \ep\leq \ep_\delta.$$
Then
$$\liminf_{\ep \to 0} G_\ep^a[v_\ep,J]\geq \kappa_s.$$
\end{prop}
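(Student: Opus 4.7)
The plan is to reduce the claimed bound to the variational definition of $\kappa_s$ by rescaling so that the transition region of $v_\ep$ is stabilized and $J$ exhausts $\mathbb R$. Without loss of generality assume $\liminf_{\ep\to 0}G^a_\ep[v_\ep,J]<\infty$, and pass to a subsequence realizing this liminf as a limit. Replacing $v_\ep$ by its truncation $(v_\ep\vee\alpha')\wedge\beta'$ to $[\alpha',\beta']$ decreases both terms of $G^a_\ep$ (truncation is a $1$-Lipschitz contraction, and $V\geq 0$ vanishes at $\alpha',\beta'$) and preserves the boundary conditions on $v_\ep(\bar a)$ and $v_\ep(\bar b)$, so I may assume $v_\ep\in[\alpha',\beta']$. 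Fix $\gamma:=(\alpha'+\beta')/2$ and, for $\ep$ small enough that $v_\ep(\bar a)<\gamma<v_\ep(\bar b)$, choose a shift $t_\ep\in[\bar a,\bar b]$ with $v_\ep(y)<\gamma$ for $y<t_\ep$ and $v_\ep(y)\geq\gamma$ for $y>t_\ep$; such $t_\ep$ exists by monotonicity.

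Next, define $w_\ep(y):=v_\ep(t_\ep+\Lambda_\ep y)$ on the expanding interval $J_\ep:=(J-t_\ep)/\Lambda_\ep$. The scaling property for $G^a_\ep$ recalled at the start of this section yields $G^a_\ep[v_\ep,J]=G^a_1[w_\ep,J_\ep]$. Since $t_\ep$ remains in the compact subset $[\bar a,\bar b]$ of the open interval $J$, the intervals $J_\ep$ exhaust $\mathbb R$ as $\ep\to 0$. The functions $w_\ep$ are non-decreasing with values in $[\alpha',\beta']$, so Helly's selection theorem (applied on $[-n,n]$ and extracted diagonally in $n$) gives a pointwise limit $w:\mathbb R\to[\alpha',\beta']$ that is non-decreasing; the definition of $t_\ep$ forces $w(y)\leq\gamma$ for $y<0$ and $w(y)\geq\gamma$ for $y>0$ (at continuity points, hence a.e.).

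Now apply Fatou's lemma, both to $V(w_\ep)\mathbf{1}_{J_\ep}$ and to the non-negative non-local integrand $\abs{w_\ep(y)-w_\ep(y')}^2\abs{y-y'}^{-1-2s}\mathbf{1}_{J_\ep\times J_\ep}$, using that $w_\ep\to w$ pointwise off the (at most countable) set of discontinuities of $w$. This yields
$$G^a_1[w,\mathbb R]\leq\liminf_{\ep\to 0}G^a_1[w_\ep,J_\ep]=\liminf_{\ep\to 0}G^a_\ep[v_\ep,J]<\infty.$$
In particular $\int_{\mathbb R}V(w)\,dy<\infty$. By monotonicity the limits $L^\pm:=\lim_{y\to\pm\infty}w(y)$ exist, and finiteness of $\int V(w)$ forces $V(L^\pm)=0$, i.e.\ $L^\pm\in\{\alpha',\beta'\}$. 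Since $L^-\leq\gamma<\beta'$ and $L^+\geq\gamma>\alpha'$, I conclude $L^-=\alpha'$ and $L^+=\beta'$, so $w$ is admissible in \eqref{optimal-profile-problem}. Therefore $G^a_1[w,\mathbb R]\geq\kappa_s$, which proves the inequality.

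The main delicate point is the choice of shift: $t_\ep$ must simultaneously stay in a compact subset strictly inside $J$ (so that $J_\ep$ exhausts $\mathbb R$) and correspond to an intermediate level of $v_\ep$ (so that the Helly limit is non-constant, which is what lets the finite-energy argument promote $L^\pm$ from the two-point set $\{\alpha',\beta'\}$ to $L^-=\alpha'$, $L^+=\beta'$). Selecting $t_\ep$ at the level $\gamma\in(\alpha',\beta')$ inside $[\bar a,\bar b]$, after first truncating $v_\ep$ to $[\alpha',\beta']$, handles both requirements.
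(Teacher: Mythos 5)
Your proof is correct. A few remarks on how it sits against the paper.

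The paper's own ``proof'' of Proposition~\ref{prop-lower-bound} is a one-line pointer to Proposition~5.1 of Garroni--Palatucci, with the observation that the exponent and scaling play no special role. You have supplied the explicit argument, and it works: truncation to $[\alpha',\beta']$ is a $1$-Lipschitz contraction so it decreases the non-local seminorm, and since $V\geq 0$ vanishes exactly at $\alpha',\beta'$ it also decreases the potential term; the boundary conditions at $\bar a,\bar b$ and monotonicity survive, so this step is harmless. Choosing the shift $t_\ep$ at the fixed level $\gamma=(\alpha'+\beta')/2$ inside $[\bar a,\bar b]$ is the key move: it keeps $t_\ep$ uniformly away from $\partial J$, so after the rescaling $y\mapsto t_\ep+\Lambda_\ep y$ the intervals $J_\ep$ exhaust $\mathbb R$, and simultaneously it forces the Helly limit $w$ to satisfy $w\leq\gamma$ on $(-\infty,0)$ and $w\geq\gamma$ on $(0,\infty)$. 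The scaling identity $G^a_\ep[v_\ep,J]=G^a_1[w_\ep,J_\ep]$ is exactly the one noted at the start of Section~\ref{section-perturbation}. Helly plus diagonal extraction gives an everywhere pointwise limit of genuine monotone representatives, which is what makes the Fatou step on the nonnegative integrand on $\mathbb R^2$ legitimate. The finiteness of $\int_\R V(w)$ then pins the asymptotic values $L^\pm$ to the zero set of $V$, and $L^-\leq\gamma<\beta'$, $L^+\geq\gamma>\alpha'$ rule out the wrong assignment, so $w$ is admissible in \eqref{optimal-profile-problem} (interpreted, as in Subsection~\ref{subsection-profile}, with $H^s_{loc}$ regularity plus finite energy) and $G^a_1[w,\mathbb R]\geq\kappa_s$.

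In short: correct, and essentially the standard blow-up/Helly/Fatou route that the cited Garroni--Palatucci proposition uses (they also rely on Helly in the companion Proposition~\ref{prop-optimum-profile}); the one genuinely delicate point, the choice of shift, you identified and handled correctly.
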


\begin{proof}
It is exactly the same as proposition 5.1 in \cite{Garroni-Palatucci:singular-perturbation-fractional-norm} with the modifications indicated after \eqref{functional-Palatucci} and we do not find it necessary to include it here.
\end{proof}

Clearly, an analogue proposition holds in the case of $u_\ep$ non-increasing satisfying the hypothesis with $\bar a>\bar b$. Next, thanks to the compactness result for $G^a_\ep$ from proposition \ref{prop-compactness}, we may assume that the sequence $(v_\ep)$ converges in $L^1(E)$ to some $v\in BV(E,I')$. Hence, the jump set $S_v$ is finite and we can find $N:=\HH^0(S_u)$ disjoint subintervals $\{J_j\}_{j=1,\ldots,N}$  such that $S_v\cap J_j\neq \emptyset$ for every $j=1,\ldots,N$.

Now, let us consider the monotone rearrangement $v^*_{\ep,j}$ of $v_\ep$ in $J_j$. This rearrangement is non-decreasing if $v$ is non-decreasing in $J_j$, and non-increasing otherwise. With this choice, clearly $v_{\ep,j}^*$ converges to $v$ in $L^1(J_j)$ and thus it satisfies the assumptions of proposition \ref{prop-lower-bound} with $J$ replaced by $J_j$. Then, for every $j=1,\ldots,N$, we may conclude that
$$\liminf_{\ep \to 0} G^a_\ep[v_\ep,J_j]\geq \liminf_{\ep \to 0} G_\ep^a[v_{\ep,j}^*, J_j]\geq \kappa_s.$$
Finally, using the subadditivity of $G^a_\ep[v_\ep,\cdot]$, we get
$$\liminf_{\ep \to 0} G^a_\ep[v_\ep,J]\geq \sum_{j=1}^N G^a_\ep[v_\ep,J_j]\geq N\kappa_s=\kappa_s \HH^0(S_v).$$
The lower bound is shown.\\

We prove first the upper bound inequality for a $v$ of the form
\bee
v(x)=\left\{
      \begin{array}{ll}
        \alpha', & \hbox{if }x\leq x_0, \\
        \beta', & \hbox{if } x>x_0.
      \end{array}
    \right.
\eee
Let $T>0$ be fixed and let $\phi^T\in H_{loc}^s(\mathbb R)$ be the minimizer for $\kappa_s^T$ found in  proposition \ref{prop-optimal-modified}. It satisfies $\phi^T(x)=\alpha'$ for all $x\leq -T$, $\phi^T(x)=\beta'$ for all $x\geq T$ and $G^a_1[\phi^T,\R]=\kappa_s^T$.

Let us define, for every $\ep>0$, $v_\ep(x):=\phi^T\lp \frac{x-x_0}{\Lambda_\ep}\rp$, for every $x\in E$. We have that $v_\ep \to v$ in $L^1(E)$ and
$$G_\ep^a[v_\ep,E]=G^a_1[\phi^T, (E-x_0)/ \Lambda_\ep]\leq G_1^a[\phi^T,\R]=\kappa_s^T.$$
By proposition \ref{prop-optimal-modified} again, we get
$$\lim_{T\to +\infty} \limsup_{\ep\to 0} G_\ep^a[v_\ep,E]\leq \kappa_s.$$
Then, by a diagonalization argument, we can construct a sequence $\tilde v_\ep$ converging to $v$ in $L^1(E)$, which satisfies,
$$\limsup_{\ep\to 0} G_\ep^a[\tilde v_\ep, E]\leq \kappa_s.$$
The optimal sequence for an arbitrary $v\in BV(E,I')$ can be easily obtained gluing the sequences constructed above for each single jump of $v$ and taking into account that the long range interactions decay as $\ep \to 0$.


\section{Some new trace inequalities}\label{section-traces}

\setcounter{equation}{00}

It is well known that traces of functions in some Sobolev spaces are represented by functions in Besov spaces (see the book \cite{Adams}, chapters 4 and 7). Here we would like to consider extensions of this result to weighted Sobolev spaces, for the case of domains in $\R^2$.  Let $-1<a<0$, and $\Omega\subset \R^2_+$ a bounded domain with Lipschitz boundary. Theorem \ref{thm-Nekvinda} gives that the class of traces on $M$, a piece of $\partial\Omega$, of the space $W^{1,2}(\Omega,w_a)$ is just the space $H^s(M)$.
However, Nekvinda's proof  does not give an exact value of the constant in the embedding. We seek to give a precise inequality between functions in $W^{1,2}(\Omega,w_a)$ and their traces.

This section is divided into four parts. First we summarize some lemmas that will be needed in the proof. In the second part, we prove the embedding inequality when $\Omega$ is the half-plane $\mathbb R^2_+$ (lemma \ref{lemma-trace-halfplane}). Next, we localize the inequality for a domain $\Omega=(-1,1)\times (0,1)$; this is the main result of the section and it is contained in theorem \ref{thm-trace-domain}. Last, we  show that the inequality is optimal in some sense.


\subsection{Technical lemmas}

In order to understand the energy term related to the fractional Laplacian, first we need to study equation \eqref{equation-extension}. We prove a basic auxiliary result:

\begin{lemma}\label{lemma-Bessel}
Consider the following ODE defined for $y\in\mathbb R^+$:
$$- \varphi(y)+\frac{a}{y}\varphi_y(y)+\varphi_{yy}(y)=0.$$
It has two linearly independent solutions. In particular,
\be\label{solution-ODE}\varphi(y)=y^{s}\left[c_1I_s(y)+c_2K_s(y)\right]\ee
for constants $c_1,c_2\in\R$. Here $I_s,K_s$ are the modified Bessel functions; their asymptotic behavior is given precisely in \eqref{asymptotic1} and \eqref{asymptotic2}.
\end{lemma}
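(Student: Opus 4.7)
The plan is a direct reduction to the modified Bessel equation via the substitution $\varphi(y) = y^s \psi(y)$, with $s = \tfrac{1-a}{2}$ (equivalently $a = 1-2s$). The ODE is linear, second order, with regular coefficients on $\R^+$, so general ODE theory already guarantees a two-dimensional solution space; the content of the lemma is the explicit form \eqref{solution-ODE}.

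First I would compute $\varphi' = s y^{s-1}\psi + y^s \psi'$ and $\varphi'' = s(s-1)y^{s-2}\psi + 2s y^{s-1}\psi' + y^s\psi''$, plug these into $-\varphi + \tfrac{a}{y}\varphi' + \varphi'' = 0$, and multiply through by $y^{2-s}$. Using $a+2s = 1$ and $as + s(s-1) = s(a+s-1) = -s^2$, the equation collapses to
\begin{equation*}
y^2 \psi'' + y\psi' - (y^2 + s^2)\psi = 0,
\end{equation*}
which is precisely the modified Bessel equation of order $s$. Its standard two linearly independent solutions are the modified Bessel functions $I_s(y)$ and $K_s(y)$, so $\psi(y) = c_1 I_s(y) + c_2 K_s(y)$ and \eqref{solution-ODE} follows. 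Linear independence of $I_s$ and $K_s$ (via their differing behaviour at $0$ and $\infty$) transfers to linear independence of the two corresponding solutions of the original ODE.

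The only step that requires any care is bookkeeping of the constants: one must verify that the coefficients in front of $y\psi'$ and $\psi$ arrange themselves into the Bessel form, and this is exactly where the choice $s = (1-a)/2$ is used in a nontrivial way. There is no real obstacle; I would simply record the asymptotic behaviour of $I_s$ and $K_s$ at $0$ and at $\infty$, which is the content of the forthcoming equations \eqref{asymptotic1}-\eqref{asymptotic2} that will be needed later to select which linear combination of $I_s$, $K_s$ is admissible in applications (for instance, to impose decay of $\varphi$ as $y\to\infty$ or boundedness at $y = 0$).
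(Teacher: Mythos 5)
Your proposal is correct and uses essentially the same approach as the paper: the substitution $\varphi(y)=y^s\psi(y)$ followed by the observation that $a+2s=1$ and $s(a+s-1)=-s^2$ collapses the equation to the modified Bessel equation of order $s$. Incidentally, your computation also silently fixes a typo in the paper, which records the intermediate equation as $y\psi''+y\psi'-(y^2+s^2)\psi=0$ when the leading coefficient should of course be $y^2$, as you have it.
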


\begin{proof}
We actually have explicit formulas for the solution. Indeed, the change $\varphi(y)=y^s \psi(y)$
gives that $\psi$ must be a solution of the modified Bessel equation
\be\label{Bessel-equation2}
y\psi''+y\psi'-(y^2+s^2)\psi=0.\ee
The books \cite{Watson:Bessel} and \cite{Abramowitz-Stegun} are classical references for this equation. There are two independent solutions of \eqref{Bessel-equation2}: $I_s$, the modified Bessel function of the first kind, and $K_s$, the modified Bessel function of second kind (see 9.6.1. in \cite{Abramowitz-Stegun}). In particular, they have the following asymptotic behavior (cf. 9.6.7 and 9.7.1 in \cite{Abramowitz-Stegun}):
\be\label{asymptotic1}I_s(y)\sim \frac{1}{\Gamma(s+1)}\lp\frac{y}{2}\rp^s,\quad
K_s(y)\sim \frac{\Gamma(s)}{2}\lp\frac{2}{y}\rp^{s}\quad\mbox{when }y\to 0^+.\ee
And when $y\to +\infty$,
\be\label{asymptotic2}I_s(y)\sim \frac{1}{\sqrt{2\pi y}}e^y,\quad
K_s(y)\sim \sqrt{\frac{\pi}{2y}}e^{-y}.\ee
\end{proof}

We show now a result from \cite{Caffarelli-Silvestre} that characterizes the minimizers of the energy in $\R^2_+$:

\begin{lemma}\label{lemma-fourier-characterization}
Given $v\in H^s(\R)$ for $1/2<s<1$, there exists a minimizer $u_0$ of the following variational problem
\be\label{functional-halfspace}J(u)=\int_{\halfplane} \grad u 2 w_a\ee
in the space $Y:=\{u\in L^2_{loc}(\R^2_+,w_a) : \nabla u\in L^2(\R^2_+,w_a)\}$, subject to the constraint $u(x,0)=v(x)$ for all $x\in\R$, and the weight is taken as $w_a:=y^a$.
Moreover,
$$u_0=P *_x v$$
where $P$ is the Poisson kernel defined in \eqref{Poisson-kernel} and
$$\int_{\halfplane}\grad {u_0} 2 w_a\;dxdy=e_s\int_{\R}\abs{\hat v(\xi)}^2 \abs{\xi}^{2s}\;d\xi$$
for some positive constant $e_s$, depending only on $s$, whose precise value is given in \eqref{e_s}.
\end{lemma}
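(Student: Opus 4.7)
The plan is to work on the Fourier side in the $x$-variable. First I would establish existence via the direct method: fix any admissible extension $\tilde u$ of $v$ (which exists by the trace theorem \ref{thm-Nekvinda} applied to a bounded piece and then extended), and minimize $J$ over $u$ with $u-\tilde u$ having zero trace on $\{y=0\}$. Since $J$ is quadratic, strictly convex, coercive on the affine class, and weakly lower-semicontinuous on the weighted Hilbert space $L^2(\halfplane,w_a)$-gradient, there is a unique minimizer $u_0\in Y$, which satisfies the Euler-Lagrange equation $\divergence(y^a\nabla u_0)=0$ in $\halfplane$ with $u_0(x,0)=v(x)$.

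Next I would characterize $u_0$ by partial Fourier transform in $x$. Writing $\hat u_0(\xi,y)$, the Euler-Lagrange equation becomes
$$-\abs{\xi}^2\hat u_0+\frac{a}{y}\partial_y\hat u_0+\partial_{yy}\hat u_0=0,\qquad \hat u_0(\xi,0)=\hat v(\xi).$$
After the scaling $t=\abs{\xi}y$, this is exactly the ODE of Lemma \ref{lemma-Bessel}, so its solutions are of the form \eqref{solution-ODE}. The finite-energy requirement $\nabla u_0\in L^2(\halfplane,w_a)$, combined with the asymptotics \eqref{asymptotic2}, rules out the $I_s$-piece (which grows exponentially), while the behavior \eqref{asymptotic1} at $t=0$ forces the coefficient of $K_s$ so that $\varphi(0)=1$. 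Hence
$$\hat u_0(\xi,y)=\hat v(\xi)\,\varphi(\abs{\xi}y),\qquad \varphi(t):=\frac{2}{\Gamma(s)}\lp\frac{t}{2}\rp^{s}K_s(t).$$
Inverting the Fourier transform gives $u_0=P*_x v$: indeed, $P*_x v$ already solves the extension problem \eqref{equation-extension} and lies in $Y$ by the Caffarelli-Silvestre work \cite{Caffarelli-Silvestre}, so by uniqueness of the minimizer the two must coincide.

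For the energy identity I would apply Plancherel in $x$ on each term of $\grad{u_0}{2}=\abs{\partial_x u_0}^2+\abs{\partial_y u_0}^2$, which yields
$$J(u_0)=\int_0^\infty\int_\R\lp\abs{\xi}^2\abs{\hat u_0}^2+\abs{\partial_y\hat u_0}^2\rp y^a\,d\xi\,dy.$$
Substituting $\hat u_0(\xi,y)=\hat v(\xi)\,\varphi(\abs{\xi}y)$ and changing variables $t=\abs{\xi}y$ separates the integrals and produces the advertised identity with
\be\label{e_s}
e_s:=\int_0^\infty\lp\varphi(t)^2+\varphi'(t)^2\rp t^a\,dt,
\ee
a finite positive constant by the asymptotics of $K_s$ (the integrand is $O(t^{a+2s})$ at $0$ and exponentially decaying at $\infty$, both integrable since $-1<a<0$ and $a+2s=1$).

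The main obstacle is to justify the Fourier manipulations rigorously in the weighted space $Y$, where elements need not be globally in $L^2(\R^2_+)$; I would handle this by first assuming $\hat v\in C_c^\infty(\R\setminus\{0\})$, performing the argument cleanly, and then passing to the limit using the closure of $C_c^\infty$ in $H^s(\R)$ and the weak continuity of $J^{1/2}$. The constant $e_s$ can be rewritten in closed form through the Bessel identity $t\varphi''+a\varphi'=t\varphi$ together with an integration by parts on \eqref{e_s}; the boundary terms vanish due to \eqref{asymptotic1} and \eqref{asymptotic2}.
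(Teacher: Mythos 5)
Your proof is correct and follows the same route as the paper: partial Fourier transform in $x$, reduction to the modified Bessel ODE of Lemma \ref{lemma-Bessel}, selection of the decaying $K_s$-branch normalized to $1$ at the origin so that $\hat u_0(\xi,y)=\hat v(\xi)\varphi_0(|\xi|y)$, and a Plancherel-plus-scaling computation that produces $e_s$ as a one-dimensional weighted integral of the profile (the paper writes it as $\bar J[\varphi_0]$, which is exactly your expression). The extra scaffolding you supply (direct method for existence, density of nice data, identification with $P*_x v$) is consistent with what the paper leaves implicit; the only slip is the claimed $O(t^{a+2s})$ rate for the integrand of $e_s$ near $t=0$ --- the correct orders are $O(t^{a})$ for $\varphi^2 t^a$ and $O(t^{2s-1})$ for $(\varphi')^2 t^a$, both still integrable since $-1<a<0<2s-1$.
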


\begin{proof}
The Euler-Lagrange equation of $J$ is
\begin{equation}\left\{\begin{split}\label{equation111}\divergence(y^a\nabla u)& =0 \mbox{ in }\R^2_+ \\
u& =v \mbox{ on }\R\times \{0\}
\end{split}\right.\end{equation}
The main idea is to reduce \eqref{equation111} to an ODE by taking Fourier transform in $x$. We obtain
$$-\abs{\xi}^2\hat u(\xi,y)+\frac{a}{y}\hat u_y(\xi,y)+\hat u_{yy}(\xi,y)=0,$$
that is ODE for each fixed value of $\xi$.

On the other hand, consider the one-dimensional functional for each $\varphi:[0,+\infty)\to \R$ given by
$$\bar J[\varphi]=\int_{\R_+}\left\{ (\varphi')^2(t)+\varphi^2(t)\right\} t^a \,dt$$
subject to the conditions $\varphi(0)=1$ and $\lim_{y\to +\infty} \varphi (t)= 0$. Its Euler-Lagrange equation is given by
$$-\varphi(y)+\frac{a}{y} \varphi_y(y)+ \varphi_{yy}(y)=0,$$
that has been well understood in lemma \ref{lemma-Bessel}. Indeed, if $\varphi_0$ is the minimizer of $\bar J$, its precise formula is given by \eqref{solution-ODE} with $c_1=0$, $c_2=cst(s)$.

Thus we conclude that the minimizer we seek in the present lemma satisfies $\hat u_0(\xi,y)=\hat v(\xi)\varphi_0(\abs{\xi}y)$.
Next, just compute
\begin{equation}\label{eq2000}
\begin{split}
\int_{\R^2_+}\grad {u_0} 2 y^a &=\int_{\R}\int_0^\infty \lp\abs{\xi}^2\abs{\hat u_0}^2+\abs{\partial_y \hat u_0}^2\rp y^a\,dy d\xi \\
& =\int_{\R}\int_0^{\infty}\abs{\hat v(\xi)}^2\abs{\xi}^2 \lp\abs{\varphi_0(\abs{\xi}y)}^2+\abs{\varphi_0'(\abs{\xi}y)}^2\rp y^a\,dyd\xi \\
& = \int_{\R} \abs{\hat v (\xi)}^2 \abs{\xi}^{1-a}\int_0^\infty \lp\abs{\varphi_0(t)}^2+\abs{\varphi_0'(t)}^2\rp t^a \,dtd\xi \\
& =\bar J[\varphi_0]\int_{\R}\abs{\hat v(\xi)}^2 \abs{\xi}^{1-a} d\xi
\end{split}\end{equation}
The lemma is proved, for a constant
\be\label{e_s}e_s:=\bar J[\varphi_0].\ee
\end{proof}

The following result allows to compare the energies in $\R^2_+$ and in $\R\times(0,M)$ for big $M$.

\begin{lemma}\label{lemma-compare-minimizers}
Given $v\in H^s(\R)$, consider the functionals
$$J[u]=\int_{\R^2_+}\grad u 2 w_a \,dxdy \texto{and} J_M[u]=\int_{\R\times (0,M)} \grad u 2 w_a \,dxdy$$
subject to the constraint $Tu=v$ in $\mathbb R$.
Let $u_0$ be the minimizer of $J$ and $u_M$ be the minimizer of $J_M$, in their corresponding weighted function spaces. Given $\ep>0$, there exists $M>0$ depending only on $\epsilon$ and not on $v$ such that
$$\int_{\R\times (0,M)}\grad {u_M} 2 w_a \,dxdy\geq (1-\ep)\int_{\R^2_+}\grad {u_0} 2w_a\,dxdy$$
\end{lemma}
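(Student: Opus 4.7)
The plan is to reduce the two-dimensional comparison to a family of explicit one-dimensional ODE problems by Fourier decomposition in $x$, paralleling the computation of Lemma~\ref{lemma-fourier-characterization}. Since $u_M$ minimizes $J_M$ under $Tu=v$, taking the Fourier transform in $x$ factorizes $\hat u_M(\xi,y)=\hat v(\xi)\,\psi^\xi(y)$, where for each $\xi$ the profile $\psi^\xi$ minimizes $\int_0^M[(\psi')^2+|\xi|^2\psi^2]y^a\,dy$ among $\psi$ with $\psi(0)=1$ and natural boundary condition $\psi'(M)=0$ at the top of the strip. The rescaling $t=|\xi|y$ reduces this to minimizing
$$\bar J_R[\tilde\psi]:=\int_0^R[(\tilde\psi')^2+\tilde\psi^2]\,t^a\,dt$$
over $\{\tilde\psi(0)=1,\ \tilde\psi'(R)=0\}$ with $R=|\xi|M$; call this minimum $m(R)$. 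Plancherel together with the computation carried out in \eqref{eq2000} then gives
$$J_M[u_M]=\int_{\R}|\hat v(\xi)|^2|\xi|^{1-a}\,m(|\xi|M)\,d\xi,\qquad J[u_0]=e_s\int_{\R}|\hat v(\xi)|^2|\xi|^{1-a}\,d\xi,$$
where $e_s=\lim_{R\to\infty}m(R)$, consistently with Lemma~\ref{lemma-fourier-characterization}.

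The core step is an explicit closed-form expression for $m(R)$. By Lemma~\ref{lemma-Bessel} the one-dimensional minimizer is $\tilde\psi_R(t)=c_2\,t^s K_s(t)+c_1(R)\,t^s I_s(t)$; the asymptotics \eqref{asymptotic1} determine $c_2=2^{1-s}/\Gamma(s)$ via $\tilde\psi_R(0)=1$, and the Bessel identities $(t^s K_s)'=-t^s K_{s-1}$ and $(t^s I_s)'=t^s I_{s-1}$ combined with the natural BC $\tilde\psi_R'(R)=0$ force $c_1(R)=c_2\,K_{s-1}(R)/I_{s-1}(R)$. Multiplying the ODE $(t^a\tilde\psi')'=t^a\tilde\psi$ by $\tilde\psi$ and integrating yields the Green's identity $\bar J_R[\tilde\psi_R]=[\tilde\psi_R\,\tilde\psi_R'\,t^a]_0^R$. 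The boundary term at $t=R$ vanishes by the natural BC, while the limit at $t=0$ is computed from the leading behaviour of $t^s K_s$ and $t^s I_s$ together with the crucial algebraic identity $2s-1+a=0$, which places both Bessel branches at exactly the same order $t^{2s-1}$ so that $\tilde\psi_R'(t)\,t^a$ has a finite nonzero limit. The outcome is
$$m(R)=e_s-C_s\,\frac{K_{s-1}(R)}{I_{s-1}(R)},\qquad C_s=\frac{2^{2-2s}}{\Gamma(s)^2}>0,$$
and the asymptotics \eqref{asymptotic2} yield $e_s-m(R)=O(e^{-2R})$ as $R\to\infty$.

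With the exponential bound $e_s-m(R)\leq\epsilon\,e_s$ valid for $R\geq R_0(\epsilon)$ in hand, I split the Fourier integral at $|\xi|=R_0/M$:
$$J[u_0]-J_M[u_M]=\int_{\R}|\hat v(\xi)|^2|\xi|^{1-a}\bigl(e_s-m(|\xi|M)\bigr)d\xi.$$
On the high-frequency part $|\xi|\geq R_0/M$ the integrand is pointwise at most $\epsilon\,e_s|\hat v|^2|\xi|^{1-a}$, hence bounded by $\epsilon\,J[u_0]$. The remaining low-frequency contribution $\int_{|\xi|<R_0/M}|\hat v|^2|\xi|^{1-a}(e_s-m)\,d\xi$ is dominated by $e_s\int_{|\xi|<R_0/M}|\hat v|^2|\xi|^{1-a}\,d\xi$, which tends to zero as $M\to\infty$ by absolute continuity of the integral in the measure $|\hat v|^2|\xi|^{1-a}d\xi$; choosing $M$ large enough to make it smaller than $\epsilon\,J[u_0]$ and relabelling $\epsilon$ yields the stated inequality. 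The principal obstacle is the Green's-identity evaluation at the origin: without the precise balance $2s+a-1=0$ built into the weight and the Bessel index, the two branches of the solution would not combine into a finite contribution, and no clean formula for $m(R)$ would be available.
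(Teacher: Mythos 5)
Your Fourier reduction and the explicit Bessel computation are correct and in fact sharper than what the paper records. In particular you correctly identify that after the rescaling $t=|\xi|y$ the one-dimensional Neumann problem lives on $(0,|\xi|M)$, so that
$$J_M[u_M]=\int_{\R}|\hat v(\xi)|^2|\xi|^{1-a}\,m(|\xi|M)\,d\xi,$$
with the $\xi$-dependent multiplier $m(|\xi|M)$, whereas the paper's display \eqref{equation13} factors $\bar J_M[\varphi_M]$ out of the $\xi$-integral as though it were a constant — an imprecision your bookkeeping exposes. Your Green's-identity evaluation at the origin, exploiting $2s-1+a=0$ to make $t^a\tilde\psi_R'(t)$ finite, and the resulting closed form $m(R)=e_s-C_s\,K_{s-1}(R)/I_{s-1}(R)$ with exponential decay of the remainder, are a genuine improvement over the paper's qualitative ``$c_1^M\to0$ exponentially.''

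The gap is in the last step. The lemma asserts that $M$ can be chosen \emph{depending only on $\ep$ and not on $v$}. Your high-frequency estimate is pointwise and $v$-independent, which is fine. But the low-frequency term
$$e_s\int_{|\xi|<R_0/M}|\hat v(\xi)|^2|\xi|^{1-a}\,d\xi$$
is killed by invoking absolute continuity of $|\hat v|^2|\xi|^{1-a}\,d\xi$, and the $M$ you then pick inevitably depends on how much of the $\dot H^s$-mass of $v$ sits near $\xi=0$. This is not a cosmetic defect: since $m(R)\to0$ as $R\to0^+$ (take $\psi\equiv1$ to see $m(R)\lesssim R^{1+a}$), one has, for $v_\lambda(x):=v_1(x/\lambda)$,
$$\frac{J_M[u_M]}{J[u_0]}=\frac{\int|\hat v_1(\eta)|^2|\eta|^{1-a}\,m(|\eta|M/\lambda)\,d\eta}{e_s\int|\hat v_1(\eta)|^2|\eta|^{1-a}\,d\eta}\longrightarrow 0\quad\text{as }\lambda\to\infty,$$
so no fixed $M$ works for all $v$. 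Your proof therefore establishes only the weaker, $v$-dependent version of the comparison, and the uniformity claim cannot be recovered by the argument you give. You should flag this explicitly rather than ``relabelling $\ep$''; the same low-frequency degeneracy is present in the paper's argument, it is simply hidden by writing the energy as $\bar J_M[\varphi_M]\int|\hat v|^2|\xi|^{1-a}d\xi$.
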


\begin{proof}
We use the ideas of lemma  \ref{lemma-fourier-characterization} in order to reduce the two dimensional problem to an ordinary differential equation through Fourier transform.
The Euler-Lagrange equation of $J_M$ is
\be\left\{\begin{split}\label{equation11}\divergence(y^a\nabla u)& =0 \mbox{ in }\R^2_+, \\
\partial_\nu u &=0 \mbox{ on }\R\times\{M\}, \\
u& =v \mbox{ on }\R\times \{0\},
\end{split}\right.\ee
while the one for $J$ is given by \eqref{equation111}.
Taking Fourier transform in $x$, we immediately realize that both minimizers $u_0$, $u_M$ must satisfy the ODE  (for each value of $\xi$)
$$-\abs{\xi}^2\hat u(\xi,y)+\frac{a}{y}\hat u_y(\xi,y)+\hat u_{yy}(\xi,y)=0.$$
Now consider the two one-dimensional functionals for each $\varphi:[0,+\infty)\to \R$,
$$\bar J[\varphi]=\int_{\R_+}\lp \abs{\varphi'}^2+\varphi^2\rp y^a \,dy \texto{and} \bar J_M[\varphi]=\int_{(0,M)} \lp \abs{\varphi'}^2+\varphi^2\rp y^a \,dy$$
subject to the condition $\varphi(0)=1$.
Let $\varphi_0$ and $\varphi_M$ be the minimizers of $\bar J$ and $\bar J_M$, respectively. Then $\hat u_0(\xi,y)=\hat v(\xi)\varphi_0(\abs{\xi}y)$ and
$\hat u_0(\xi,y)=\hat v(\xi)\varphi_M(\abs{\xi}y)$ because both $\varphi_0$ and $\varphi_M$ satisfy the Euler equation
\be\label{Bessel-equation}- \varphi(y)+\frac{a}{y}\varphi_y(y)+\varphi_{yy}(y)=0.\ee
with boundary conditions
\be\label{boundary-conditions} \begin{split}
\varphi_0(0)=1,&\quad \varphi_0(y)\to 0 \mbox{ when }y\to+\infty,\\
\varphi_M(0)=1,&\quad \varphi'(M)=0.\end{split}\ee
Thus lemma \eqref{lemma-Bessel} gives that
$$\varphi_0(y)=y^{s}\left[c_1I_s(y)+c_2K_s(y)\right],\quad\varphi_M(y)=y^{s}\left[c^M_1I_s(y)+c^M_2K_s(y)\right].$$
If we impose the boundary conditions \eqref{boundary-conditions} at $y=0$, we obtain the value of $c_2=c_2^M=cst(s)$. Now, note that $\partial_y\varphi_M(M)=0$ while $\varphi$ decays at infinity, this fixes $c_1,c^M_1$. In particular, $c_1^M \to c_1=0$ exponentially. We have then that $\varphi_M$ converges to $\varphi_0$ and $\varphi_M'$ to $\varphi_0'$ when $M\to\infty$, and thus, also
\be\label{equation14}\bar J_M[\varphi_M]\to \bar J[\varphi_0].\ee
Then, the computation in \eqref{eq2000} gives that
\be\label{equation12}
\int_{\R^2_+}\grad {u_0} 2 w_a =\bar J[\varphi_0]\int_{\R}\abs{\hat v(\xi)}^2 \abs{\xi}^{1-a} d\xi\ee
while
\be\label{equation13}\int_{\R\times(0,M)}\grad {u_M} 2 w_a =\bar J_M[\varphi_M]\int_{\R}\abs{\hat v(\xi)}^2 \abs{\xi}^{1-a} d\xi.\ee
The lemma is proved by comparing \eqref{equation13} and \eqref{equation12}, because of \eqref{equation14}.
\end{proof}


\subsection{Inequality for a half-plane}

Denote by $\R^2_+$ the half-plane $\R\times(0,+\infty)$, and consider the weight in $\R^2_+$ given by $w_a(x,y):=y^a$ for $x\in\R$, $y>0$.
When $a=0$, Fourier transform methods quickly give that a function $u$ defined on $\R^2_+$ has a well defined trace on $\R\times\{0\}$ and moreover (see \cite{Alberti-Bouchitte-Seppecher:Phase-transition}, lemma 6.2)
\begin{equation}
\label{trace-half-laplacian}\int_{\R^2}\abs{\frac{v(x)-v(x')}{x-x'}}^2\;dxdx'\leq 2\pi \int_{\R^2_+} \grad u 2 \;dxdy.\end{equation}
This constant $2\pi$ is sharp.\\

Trace inequalities for the general case $-1<a<0$ require the characterization of the fractional Laplacian by Caffarelli-Silvestre \cite{Caffarelli-Silvestre} as a Dirichlet-to-Neumann operator. The first result of this subsection deals with the  generalization of \eqref{trace-half-laplacian}.

\begin{lemma}\label{lemma-trace-halfplane}
Let $u$ be a function in $L^2_{loc}(\R^2_+,w_a)$ with derivative in $L^2(\R^2_+,w_a)$. Then the trace of $u$ on $\R\times\{0\}$, call it $v$, is a well defined function $v\in L^2_{loc}(\R)$. Moreover,
\be\label{trace-inequality-halfplane}\int_{\R^2}\frac{\abs{v(x)-v(x')}^2}{\abs{x-x'}^{1+2s}}\;dxdx'\leq D_s \int_{\R^2_+} \grad u 2 w_a\;dxdy,\ee
where $D_s$ is a constant depending only on $s$ but not on $u$, and it is given precisely in \eqref{D_s}.
\end{lemma}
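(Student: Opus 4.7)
The plan is to identify both sides of \eqref{trace-inequality-halfplane} with the same Fourier integral $\int_\R |\hat v(\xi)|^2 |\xi|^{2s}\,d\xi$, up to explicit constants, and then take the ratio to obtain $D_s$. The right-hand side is controlled from below by Lemma \ref{lemma-fourier-characterization}, which identifies the minimum of $\int_{\R^2_+}|\nabla u|^2 w_a$ among all extensions of $v$ as the Poisson extension $u_0 = P *_x v$, with
\bee
\int_{\R^2_+}\abs{\nabla u_0}^2 w_a \,dx dy \;=\; e_s\int_\R \abs{\hat v(\xi)}^2 \abs{\xi}^{2s}\,d\xi .
\eee
Thus for \emph{any} $u$ in the relevant weighted space with trace $v$, one has the one-sided estimate
$\int_{\R^2_+}\abs{\nabla u}^2 w_a \ge e_s \int_\R |\hat v|^2 |\xi|^{2s}\,d\xi$.

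For the left-hand side, I would first reduce to $v\in \mathcal S(\R)$ by density (which also gives well-definedness of the trace together with Theorem \ref{thm-Nekvinda} applied on bounded pieces of the half-plane and standard cut-off arguments). For Schwartz $v$, Plancherel and the change of variable $t = h\xi$ yield
\bee
\int_{\R^2}\frac{\abs{v(x)-v(x')}^2}{\abs{x-x'}^{1+2s}}\,dx\,dx'
= \int_\R \abs{\hat v(\xi)}^2 \lp \int_\R \frac{4\sin^2(h\xi/2)}{\abs{h}^{1+2s}}\,dh\rp d\xi
= f_s \int_\R \abs{\hat v(\xi)}^2 \abs{\xi}^{2s}\,d\xi,
\eee
where $f_s := \int_\R \frac{4\sin^2(t/2)}{|t|^{1+2s}}\,dt$ (finite because $1/2<s<1$, modulo the usual normalization of Plancherel's identity which is absorbed into $f_s$).

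Combining the two displays gives \eqref{trace-inequality-halfplane} with
\be\label{D_s}
D_s := \frac{f_s}{e_s}.
\ee
The proof of the trace being well-defined as an $L^2_{loc}(\R)$ function is the only genuinely technical step: one can approximate $u$ by smooth functions using standard mollification respecting the $A_2$-weight $w_a$ (since $-1<a<0$ implies $y^a$ is Muckenhoupt), apply the inequality for smooth data, and then pass to the limit using the finiteness of the right-hand side plus Fatou on the left. The main obstacle is purely notational: identifying the Plancherel constant that appears in $f_s$ and ensuring consistency with the normalization implicitly fixed in Lemma \ref{lemma-fourier-characterization} when defining $e_s$; once the two Fourier representations are derived with the same convention, the inequality is immediate and in fact tight on the Poisson extension, which will be useful for the optimality remark announced at the end of the section.
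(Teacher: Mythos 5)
Your proposal is essentially identical to the paper's proof: both compute the Gagliardo seminorm on the left via Plancherel and the substitution $t=h\xi$ to obtain $f_s\int|\hat v|^2|\xi|^{2s}$ (your $f_s=\int_\R\frac{4\sin^2(t/2)}{|t|^{1+2s}}\,dt$ equals the paper's $d_s(2\pi)^{2s}$ once the same Fourier normalization is fixed, since $2-2\cos z = 4\sin^2(z/2)$), and both bound the right-hand side from below by the energy of the Poisson extension using Lemma \ref{lemma-fourier-characterization}, giving $D_s$ as the ratio of the two constants. The normalization caveat you flag is exactly right and is the only point of divergence from the paper's presentation, where the convention $e^{2\pi i h\xi}$ is used throughout so that $D_s=d_s(2\pi)^{2s}/e_s$.
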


\begin{proof}
First we assume that $u$ is smooth. The result will then follow by approximation. Next, if we apply Plancherel theorem for the Fourier transform we obtain
\be\label{transform1}\bsplit \int_{\R^2}\frac{\abs{v(x)-v(x')}^2}{\abs{x-x'}^{1+2s}}\;dxdx' & =
\int_{\R}\left[\int_{\R}{\abs{v(x+h)-v(x)}^2}\;dx\right]\frac{1}{h^{1+2s}}\;dh \\
&= \int_{\R} \left[\int_{\R}\abs{\hat v(\xi)(e^{2\pi i h \xi}-1)}^2 d\xi\right]\frac{1}{h^{1+2s}}\;dh \\
& =\int_{\R}\left[\int_{\R}\frac{2-2\cos(2\pi h \xi)}{h^{1+2s}}\;dh\right]\abs{\hat v(\xi)}^2 \;d\xi \\
& = d_s (2\pi)^{2s}\int_{\R} \abs{\hat v(\xi)}^2\abs{\xi}^{2s}d\xi,
\end{split}\ee
for
\be\label{d_s}
d_s=\int_{\R}\frac{2-2\cos z}{z^{1+2s}}\;dz.
\ee
On the other hand, let $u_0$ be the minimizer of the functional
$$J(u)=\int_{\halfplane} \grad u 2 w_a$$
found in lemma \ref{lemma-fourier-characterization}. Then
\be\label{transform2}\int_{\halfplane}\grad {u_0} 2 w_a\;dxdy=e_s\int_{\R}\abs{\hat v(\xi)}^2 \abs{\xi}^{2s}\;d\xi.\ee
We set
\be\label{D_s}
D_s:=d_s(2\pi)^{2s}/e_s,
\ee
where $e_s$ is given in  \eqref{e_s} and $d_s$ in \eqref{d_s}.

The proof of  \eqref{trace-inequality-halfplane} is completed from \eqref{transform1}, \eqref{transform2}, and using the fact that $u_0$ is a minimizer of $J$.
\end{proof}

\begin{cor}\label{cor-sharp-halfplane}
Inequality \eqref{trace-inequality-halfplane} is achieved when $u=P*_x v$.
\end{cor}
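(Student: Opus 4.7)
The plan is to observe that every step in the proof of Lemma \ref{lemma-trace-halfplane} is an equality \emph{except} for the single application of the minimizing property of $u_0$, and to exploit this. More precisely, I would trace back through \eqref{transform1} and \eqref{transform2}: the computation in \eqref{transform1} gives an exact identity
$$\int_{\R^2}\frac{\abs{v(x)-v(x')}^2}{\abs{x-x'}^{1+2s}}\,dxdx' = d_s(2\pi)^{2s}\int_{\R}\abs{\hat v(\xi)}^2\abs{\xi}^{2s}\,d\xi,$$
and \eqref{transform2}, which comes from Lemma \ref{lemma-fourier-characterization}, gives the exact identity
$$\int_{\halfplane}\grad{u_0}{2}w_a\,dxdy = e_s\int_{\R}\abs{\hat v(\xi)}^2\abs{\xi}^{2s}\,d\xi$$
for the specific extension $u_0 = P *_x v$. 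Combining these two identities with the definition $D_s := d_s(2\pi)^{2s}/e_s$ immediately yields
$$\int_{\R^2}\frac{\abs{v(x)-v(x')}^2}{\abs{x-x'}^{1+2s}}\,dxdx' = D_s\int_{\halfplane}\grad{u_0}{2}w_a\,dxdy,$$
so inequality \eqref{trace-inequality-halfplane} is saturated at $u = u_0$.

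The only place where Lemma \ref{lemma-trace-halfplane} \emph{lost} an equality was in the passage from a general competitor $u$ to the minimizer $u_0$ via $J(u) \geq J(u_0)$. Since $u_0 = P *_x v$ is by construction the minimizer in $Y$ identified in Lemma \ref{lemma-fourier-characterization}, this step becomes an equality precisely for that choice of extension. Hence no further work is needed beyond invoking the earlier results.

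The only mild subtlety to flag is that one should verify that $u_0 = P *_x v$ indeed lies in the admissible class for which \eqref{transform2} was derived (i.e.\ $u_0 \in L^2_{\text{loc}}(\halfplane, w_a)$ with $\nabla u_0 \in L^2(\halfplane, w_a)$, with trace $v$ in the sense of Theorem \ref{thm-Nekvinda}); but this is exactly the content of Lemma \ref{lemma-fourier-characterization}. Thus the corollary is immediate from the chain of identities above.
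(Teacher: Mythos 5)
Your proposal is correct and is essentially the paper's own (one-line) argument, just spelled out: the only inequality in the proof of Lemma \ref{lemma-trace-halfplane} comes from $J(u)\ge J(u_0)$, and by Lemma \ref{lemma-fourier-characterization} the minimizer $u_0$ is exactly $P*_x v$, so equality holds for that choice.
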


\begin{proof}
Indeed, the minimizer $u_0$ is given precisely by $u_0=P *_x v$.
\end{proof}

\bigskip

\subsection{Inequality of a bounded domain}

Now we would like to localize inequality \eqref{trace-inequality-halfplane} in a bounded domain $\Omega\subset\halfplane$ such that part of its boundary, say $M\subset \partial\Omega$ lies on $\R\times\{0\}$. Again, we fix the weight $w_a:=y^a$. Nekvinda's work, summarized in theorem \ref{thm-Nekvinda}, assures that the trace of a function $u\in W^{1,2}(\Omega,w_a)$ belongs to the Besov space $H^s(M)$, but it does not give the explicit value of the constant in the embedding.

Here we claim that this constant can be taken as the the constant in $\R^2_+$, the $D_s$ found in the previous subsection:

\begin{thm}\label{thm-trace-domain}
Let $\Omega:=(-1,1)\times (0,1)\subset \R^2_+$ and  $u\in W^{1,2}(\Omega,w_a)$. Then the trace of $u$ on $(-1,1)\times\{0\}$, call it $v$, is a well defined function $v\in H^s(-1,1)$ and, for the same constant $D_s$ as in the lemma \ref{lemma-trace-halfplane}, we have
\be\label{trace-inequality-domain}\int_{(-1,1)^2}\frac{\abs{v(x)-v(x')}^2}{\abs{x-x'}^{1+2s}}\;dxdx'\leq D_s \int_{\Omega} \grad u 2 w_a\;dxdy.\ee
\end{thm}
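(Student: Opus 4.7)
My strategy is to reduce Theorem \ref{thm-trace-domain} to the half-plane inequality of Lemma \ref{lemma-trace-halfplane} by extending $u$ from $\Omega$ to the full half-plane $\halfplane$. Given $u\in W^{1,2}(\Omega,w_a)$ with trace $v$ on $M:=(-1,1)\times\{0\}$, I would build an extension $\bar u\in W^{1,2}(\halfplane,w_a)$ with $\bar u|_\Omega=u$ whose trace $\bar v$ on $\R\times\{0\}$ agrees with $v$ on $(-1,1)$. Lemma \ref{lemma-trace-halfplane} applied to $\bar u$ would then give
\bee
\int_{\R^2}\frac{|\bar v(x)-\bar v(x')|^2}{|x-x'|^{1+2s}}\,dx\,dx'\;\leq\; D_s\int_{\halfplane}|\nabla\bar u|^2\,w_a\,dx\,dy,
\eee
and the LHS of the theorem is dominated by the LHS above since $\bar v=v$ on $(-1,1)$ and the integrand is nonnegative.

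To construct $\bar u$, I would first replace $u$ by the unique minimizer in $W^{1,2}(\Omega,w_a)$ of $w\mapsto\int_\Omega|\nabla w|^2 w_a$ subject only to the trace constraint $Tw=v$ on $M$. This replacement can only decrease the right-hand side of the theorem, and it produces a function satisfying $\divergence(w_a\nabla u)=0$ in $\Omega$ together with natural (Neumann) boundary conditions on $\partial\Omega\setminus M$. Next, I would continue $u$ to $\halfplane\setminus\bar\Omega$ by solving the same degenerate elliptic equation with Dirichlet data $u|_{\partial\Omega\cap\halfplane}$ on the lateral and top parts of $\partial\Omega$, free (Neumann) conditions on $(\R\setminus[-1,1])\times\{0\}$, and decay at infinity. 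The Neumann compatibility coming from the first reduction ensures that the resulting $\bar u$ lies in $W^{1,2}(\halfplane,w_a)$.

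The main obstacle is to establish
\bee
\int_{\halfplane}|\nabla\bar u|^2 w_a\;\leq\;\int_\Omega|\nabla u|^2 w_a,
\eee
which would close the argument with the constant exactly equal to $D_s$. Any naive extension (reflection across $\partial\Omega\setminus M$, cutoff, or Stein-type extension) pays a strictly positive additional energy on $\halfplane\setminus\bar\Omega$, and so yields a larger constant. To recover the sharp $D_s$ I would exploit the Fourier/ODE characterization of the Poisson extension from Lemma \ref{lemma-fourier-characterization} together with Lemma \ref{lemma-compare-minimizers}: approximate $\halfplane$ by the strips $\R\times(0,M)$, compare the $\Omega$-minimizer with the strip-minimizer on an enlarged strip, and pass to the limit $M\to\infty$, in which the strip-minimizer energy converges to the half-plane-minimizer energy. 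This limit procedure is the technical heart of the proof; it is what absorbs the geometric loss coming from restricting to $\Omega$ and leaves the same $D_s$ from Lemma \ref{lemma-trace-halfplane}.
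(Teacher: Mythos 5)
Your opening reductions are sound and match the paper's: it is legitimate to replace $u$ by the minimizer $u_1$ of the Dirichlet energy in $\Omega$ subject only to $Tu_1=v$ on $M$, which satisfies $\divergence(y^a\nabla u_1)=0$ with Neumann conditions on $\partial\Omega\setminus M$, and this can only decrease the right-hand side. You also correctly diagnose that any extension $\bar u$ of $u_1$ to $\halfplane$ with $\bar u|_\Omega=u_1$ necessarily has
\[
\int_{\halfplane}|\nabla \bar u|^2 w_a \;=\; \int_\Omega |\nabla u_1|^2 w_a + \int_{\halfplane\setminus\bar\Omega}|\nabla \bar u|^2 w_a \;\geq\; \int_\Omega |\nabla u_1|^2 w_a,
\]
so the inequality you call the "main obstacle" cannot hold; it would force $\bar u$ constant outside $\Omega$, hence $u_1$ constant on $\partial\Omega\setminus M$. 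That is not a small technical difficulty — it makes the route as stated impossible, and your sketch of how to "recover" the constant (approximate by strips, use Lemma \ref{lemma-compare-minimizers}, pass to the limit) does not address it, because the excess energy outside $\Omega$ is a fixed positive quantity, not something that vanishes as $M\to\infty$.

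The idea that actually closes this gap in the paper is an $N$-fold even reflection of $u_1$ across the lateral sides $\{\pm M\}\times(0,M)$ (permitted by the Neumann condition), producing a valid solution on a wide rectangle $A_{NM}\times(0,M)$ whose energy is exactly $N\int_{\Omega_M}|\nabla u_1|^2 w_a$. The trace is then extended to $\R$ and the function glued to the Poisson extension $u_2=P*_x v_2$ on the complement, giving a competitor $u_3$ on $\R\times(0,M)$. The crucial point is that the extra energy from the cutoff region and from the tails ($I_2$, $I_3$ in the paper's notation) is bounded independently of $N$, i.e.\ $o(N)$. Combining $\int_{\R\times(0,M)}|\nabla u_3|^2w_a\geq(1-\ep)\int_{\halfplane}|\nabla u_0|^2w_a$ (from Lemma \ref{lemma-compare-minimizers}), the half-plane identity from Corollary \ref{cor-sharp-halfplane}, and the decomposition of the energy of $u_3$, one divides by $N$ and lets $N\to\infty$; the extension overhead disappears and the sharp constant $D_s$ survives (up to the $\ep$, which is removed at the end, together with a rescaling from $\Omega_M$ back to $(-1,1)\times(0,1)$). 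Without the reflection-and-average mechanism that makes the useful part of the energy scale like $N$ while the loss stays $O(1)$, there is no way to absorb the geometric loss, and the argument you propose stalls exactly at the point you identified.
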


\begin{proof}
First note that theorem \ref{thm-Nekvinda} quickly gives that $v\in H^s(-1,1)$. Moreover, it is enough to prove inequality \eqref{trace-inequality-domain} for $u\in\mathcal C^\infty(\bar \Omega)$.

Fixed $\epsilon>0$, consider the constant $M>0$ given in lemma \ref{lemma-compare-minimizers}, that is independent of $u,v$. We will show first that
\be\label{inequality-domain-M}\int_{\Omega_M} \grad u 2 w_a\;dxdy \geq D_s^{-1}(1-\epsilon)\int_{(-M,M)^2}\frac{\abs{v(x)-v(x')}^2}{\abs{x-x'}^{1+2s}}\;dxdx',\ee
i.e., that the inequality holds in a domain $\Omega_M:=(-M,M)\times (0,M)$ up to epsilon.
Because the inequality \eqref{inequality-domain-M} is invariant under the rescaling $(\tilde x,\tilde y):=\frac{1}{M}(x,y)$, it also holds for $(-1,1)\times (0,1)$, up to $\ep$. This would finish the proof of \eqref{trace-inequality-domain}.\\

We compare the energy of $u$ to the energy of $u_1$,  the minimizer of the functional
$$J_{\Omega_M}[u]=\int_{\Omega_M} \grad u 2 w_a \,dxdy,$$
subject to the restriction $Tu_1=v$. We now try to extend $u_1$ defined on $\Omega_M$ to the whole $\mathbb R^2_+$, in order to use lemma \ref{lemma-trace-halfplane}. We do it in several steps.

First, because $u_1$ satisfies a zero Neumann condition on the boundary $\{-M,M\}\times (0,M)$, it is possible to reflect it evenly $N$ times  to obtain a solution on $\Omega_{NM}=A_{NM}\times (0,M)$ where $A_{NM}$ is an interval of length $NM$. We still denote that extension by $u_1$. Call $v_1$ to be the trace of $u_1$ on $y=0$.
Now we would like to extend $u_1$ to the whole $\R\times (0,M)$. For this, first extend $v_1$ continuously to $\R$ such that the extension, call it $v_2$,  belongs to $H^s(\R)$, and solve the Caffarelli-Silvestre extension problem
\bee\left\{\begin{split}\divergence(y^a\nabla u_2)& =0 \mbox{ in }\R^2_+ \\
u_2& =v_2 \mbox{ on }\R\times \{0\}.
\end{split}\right.\eee
Let $B:=[b_1-1,b_1]\cup[b_2,b_2+1]$ where $b_1=\inf (A_{NM})$, $b_2=\sup (A_{NM})$, and let $\eta:\R\to \R^+$ be a smooth cutoff such that $\eta=1$ in $A_{NM}$ and $\eta=0$ outside $A_{NM}\cup B$. Consider the function obtained by the gluing
$$u_3:=\eta u_1+(1-\eta)u_2,$$
it is defined on $\R\times (0,M)$.

Next, let $u_M:\R\times(0,M)\to \R$ be the minimizer of the functional
$$J_M[u]=\int_{\R\times(0,M)}\grad u 2 w_a \,dxdy$$
and $u_0:\R^2_+\to\R$ be the minimizer of
$$J[u]=\int_{\R^2_+}\grad u 2 w_a \,dxdy,$$
both subject to the condition $T u=v_2$ on $y=0$. In particular, we can use the Poisson kernel \eqref{Poisson-kernel} to write $u_0=P *_x v_2$. Lemma \ref{lemma-compare-minimizers} implies that
\be\label{equation17}\int_{\R\times(0,M)} \grad {u_3} 2 w_a \geq \int_{\R\times(0,M)} \grad {u_M} 2 w_a\geq (1-\ep)\int_{\R^2_+}\grad {u_0} 2 w_a.\ee
Applying corollary \ref{cor-sharp-halfplane} to $u_0=P*_x v_2$, we obtain that
\be\label{equation15}\begin{split}
\int_{\R^2_+}\grad {u_0} 2 w_a & =D_s^{-1}\int_{\R\times\R} \frac{\abs{v_2(x)-v_2(x')}^2}{\abs{x-x'}^{1+2s}}\,dxdx' \\
& \geq ND_s^{-1}\int_{(-M,M)^2} \frac{\abs{v(x)-v(x')}^2}{\abs{x-x'}^{1+2s}}\,dxdx',\end{split}\ee
where the last inequality holds just by dropping part of the domain of integration.
On the other hand, because of our gluing construction,
\begin{equation}
\label{equation16}\begin{split}
\int_{\R\times(0,M)}\grad {u_3} 2 w_a & = N\int_{\Omega_M} \grad {u_1} 2 w_a+\int_{B\times (0,M)}\grad {u_3}2 w_a+\int_{[\R\backslash(A_{NM}\cup B)]\times (0,M)} \grad {u_2}2 w_a \\
& =: N\int_{\Omega_M} \grad {u_1} 2 w_a +I_3+I_2.\end{split}\ee

We would like to show that the last two terms $I_2, I_3$ above are bounded independently of $N$. First of all, we can use the Poisson kernel \eqref{Poisson-kernel} to write explicit formulas for $u_2=P *_x v_2$, i.e,
$$u_2(x,y)=c_{2,s}\int_{\R}  \frac{y^{1-a} v_2(\xi)}  {(\abs{x-\xi}^2+\abs{y}^2) ^{\frac{2-a}{2}}}d\xi=
c_{2,s}\int_{\mathbb R} \frac{v_2(x-zy)}{\lp z^2+1\rp^{\frac{2-a}{2}}}\;dz$$
after the change $z=\frac{x-\xi}{y}$.
Because the original $v$ was differentiable, we have that $v_2$ has bounded derivative (it is smooth except perhaps at the reflection points). Then
$$\partial_x u_2(x,y)=C\int_{\mathbb R} \frac{v_2'(x-zy)}{\lp z^2+1\rp^{\frac{2-a}{2}}}\;dz$$
and
$$\partial_y u_2(x,y)=C \int_{\mathbb R} \frac{(-z)v_2'(x-zy)}{\lp z^2+1\rp^{\frac{2-a}{2}}}\;dz.$$
In the following, $C$ will be a positive constant that may change from line to line. We consider the following auxiliary term $I$ and can compute, using H\"older, that
\begin{equation}
\begin{split}
I: & =\int_{x\in(b_2,+\infty),y\in(0,M)} \grad {u_2} 2 y^a\;dxdy \\
& \leq C\int_{x\in(b_2,+\infty),y\in(0,M)} \int_{z\in\mathbb R} y^a\frac{\left[v_2'(x-zy)\right]^2\max{\{1,\abs z ^2\}}}{\lp z^2+1\rp^{2-a}}\;dzdxdy.\end{split}\end{equation}
We could have extended $v_2$ to constant (or very decaying to constant) on the interval $[b_2,+\infty)$. Thus it is enough to consider the change $\theta=x-zy-b_2$ so that
\be\begin{split}
 I& \leq C\int_{z\in \mathbb R} \frac{\max{\{1,\abs z ^2\}}}{\lp z^2+1\rp^{2-a}}\int_{y\in(0,M)} y^a\int_{\theta\in (-zy,0)} v_2'(\theta+b_2)^2\;d\theta dydz\\
& \leq C\int_{z\in \mathbb R} \frac{\abs{z}\max{\{1,\abs z ^2\}}}{\lp z^2+1\rp^{2-a}}\;dx
\int_{y\in(0,M)} y^{a+1}dy. \end{split}\ee
This last integral is bounded independently of $N$ because $-1<a<0$. Similar arguments give that the terms $I_2$, $I_3$ from \eqref{equation16} are $o(N)$.\\

Then, combining \eqref{equation17}, \eqref{equation15} and \eqref{equation16} we obtain that
$$N\int_{\Omega_M}\grad {u_1} 2 w_a +o(N)\geq (1-\ep)D_s^{-1} N\int_{(-M,M)^2}\frac{\abs{v(x)-v(x')}^2}{\abs{x-x'}^{1+2s}}\,dxdx'.$$
Divide the previous inequality by $N$ and let $N\to\infty$, so that
\begin{equation}\label{eq40}
\int_{\Omega_M}\grad {u_1} 2 w_a \geq (1-\ep)D_s^{-1} \int_{(-M,M)^2}\frac{\abs{v(x)-v(x')}^2}{\abs{x-x'}^{1+2s}}\,dxdx'.\end{equation}
To finish just note that
\begin{equation}\label{eq41}
\int_{\Omega_M} \grad u 2 w_a\,dxdy \geq\int_{\Omega_M}\grad {u_1} 2w_a\,dxdy\end{equation}
because $u_1$ was a minimizer of $J_{\Omega_M}$. The proposition follows from \eqref{eq40} and \eqref{eq41}, after rescaling back.
\end{proof}


\subsection{Optimality}

In the remaining of the section we would like to show that the constant $D_s$ in \eqref{trace-inequality-domain} is optimal in some sense.

\begin{prop}\label{prop-sharp}
For each $\ep>0$, there exists a function $u_\ep$ defined on $(-1,1)\times(0,1)$ such that
\be\label{eq10}
\ep^{1-a}\int_{(-1,1)\times(0,1)} \grad {u_\ep} 2 w_a =D_s^{-1}\ep^{1-a}\int_{(-1,1)^2}\frac{\abs{T u_\ep(x)-T u_\ep{(x')}}^2}{\abs{x-x'}^{1+2s}}dxdx'+R_\ep
\ee
with $\lim_{\ep\to 0 }|R_\ep|=0$, and this term is of lower order when $\ep\to 0$.
\end{prop}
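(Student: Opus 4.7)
The plan is to construct $u_\ep$ as the rescaled Caffarelli-Silvestre extension of an optimal profile, so that on the expanding rescaled domain we recover the exact equality case on the half-plane furnished by Corollary \ref{cor-sharp-halfplane}. Concretely, I fix a non-decreasing minimizer $v\in H^s(\R)$ for the optimal profile problem \eqref{optimal-profile-problem} (provided by Proposition \ref{prop-optimum-profile}), let $u_0:=P*_x v$ be its Poisson extension to $\R^2_+$, and set
\bee u_\ep(x,y):=u_0\lp\frac{x}{\Lambda_\ep},\frac{y}{\Lambda_\ep}\rp,\qquad (x,y)\in (-1,1)\times(0,1).\eee
By Corollary \ref{cor-sharp-halfplane} applied to $u_0$, the trace inequality is saturated on the full half-plane:
\bee \int_{\R^2_+}\grad{u_0}{2}w_a\,dxdy\;=\;D_s^{-1}\iint_{\R\times\R}\frac{|v(x)-v(x')|^2}{|x-x'|^{1+2s}}\,dxdx'.\eee

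Next I would carry out the scaling bookkeeping. Using $s=(1-a)/2$, hence $1-2s=a$, together with $\Lambda_\ep=\ep^{(1-a)/(-a)}$ so that $\ep^{1-a}\Lambda_\ep^{a}=1$, the change of variables $(\tilde x,\tilde y)=(x,y)/\Lambda_\ep$ yields the two identities
\bee \ep^{1-a}\int_{(-1,1)\times(0,1)}\grad{u_\ep}{2}w_a\,dxdy=\int_{D_\ep}\grad{u_0}{2}w_a\,d\tilde x d\tilde y,\eee
\bee \ep^{1-a}\int_{(-1,1)^2}\frac{|Tu_\ep(x)-Tu_\ep(x')|^2}{|x-x'|^{1+2s}}\,dxdx'=\iint_{E_\ep\times E_\ep}\frac{|v(\tilde x)-v(\tilde x')|^2}{|\tilde x-\tilde x'|^{1+2s}}\,d\tilde xd\tilde x',\eee
where $D_\ep:=(-1/\Lambda_\ep,1/\Lambda_\ep)\times(0,1/\Lambda_\ep)$ and $E_\ep:=(-1/\Lambda_\ep,1/\Lambda_\ep)$. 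Both domains exhaust $\R^2_+$ and $\R$ respectively as $\ep\to 0$.

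Subtracting these two identities and invoking the half-plane equality displayed above, the remainder becomes
\bee R_\ep=-\int_{\R^2_+\setminus D_\ep}\grad{u_0}{2}w_a\,d\tilde xd\tilde y\;+\;D_s^{-1}\iint_{(\R\times\R)\setminus(E_\ep\times E_\ep)}\frac{|v(x)-v(x')|^2}{|x-x'|^{1+2s}}\,dxdx',\eee
a tail of two globally finite integrals. The weighted Dirichlet integral of $u_0$ over $\R^2_+$ is finite by the Fourier computation of Lemma \ref{lemma-fourier-characterization}, and the Gagliardo semi-norm of $v$ is finite since $v\in H^s(\R)$; dominated convergence then delivers $R_\ep\to 0$. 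As both sides of \eqref{eq10} share the strictly positive limit $D_s^{-1}\iint_{\R\times\R}|v(x)-v(x')|^2|x-x'|^{-(1+2s)}dxdx'$, namely the optimal profile energy, the term $R_\ep$ is indeed of lower order. The main (and essentially only) obstacle is verifying the scaling exponents and confirming that the tails of the energy integral of $u_0$ and of the Gagliardo semi-norm of $v$ decay to zero; no cutoff construction is required, because the restriction $u_0(\cdot/\Lambda_\ep,\cdot/\Lambda_\ep)\big|_{(-1,1)\times(0,1)}$ automatically has the correct boundary trace on $(-1,1)\times\{0\}$.
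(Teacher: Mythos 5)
Your construction is correct and parallels the paper's in outline: both take $u_\ep$ to be the Caffarelli--Silvestre extension of a one-dimensional profile transitioning on a layer of length $O(\Lambda_\ep)$, invoke Corollary \ref{cor-sharp-halfplane} for the exact equality on the half-plane $\R^2_+$, and identify $R_\ep$ with the tail contributions lost when restricting to the bounded domain. Where you differ is the choice of profile and the mechanism for controlling the tails. The paper takes the explicit piecewise-linear profile $v_\ep$ (constant, then affine on $[-\Lambda_\ep/2,\Lambda_\ep/2]$, then constant) rather than your fixed optimal profile $\phi$; the remark after the paper's proof notes that any profile transitioning on scale $\Lambda_\ep$ works, so your choice is admissible. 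More substantively, your rescaling device --- freeze $u_0$ and let $D_\ep$, $E_\ep$ exhaust $\R^2_+$, $\R$ --- converts the tails into remainders of the globally finite integrals $\int_{\R^2_+}\grad{u_0}2 w_a$ and $\iint_{\R^2}|\phi(x)-\phi(x')|^2\,|x-x'|^{-1-2s}\,dx\,dx'$, and dominated convergence immediately gives $R_\ep\to 0$. The paper instead keeps the domain fixed and lets the $\ep$-dependent profile concentrate, so that both total energies diverge like $\Lambda_\ep^{1-2s}$; it must then extract from the Poisson formula the statements that the portion away from the transition layer is $o(\Lambda_\ep^{1-2s})$ while the main part is $C\Lambda_\ep^{1-2s}+o(\Lambda_\ep^{1-2s})$, and balance using $\ep^{1-a}\Lambda_\ep^a=1$. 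Same content; your bookkeeping is cleaner and needs no order computation.

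One caveat you should make explicit: finiteness of $\int_{\R^2_+}\grad{u_0}2 w_a$ does not follow from Lemma \ref{lemma-fourier-characterization} as you invoke it, because the optimal profile $\phi$ has distinct limits at $\pm\infty$ and so does not lie in $L^2(\R)$; the Plancherel computation there is not applicable verbatim. A routine repair is to decompose $\phi=\psi+g$ with $\psi$ a fixed smooth monotone interpolant having compactly supported derivative and $g\in H^s(\R)$, treating the two pieces separately, or to argue directly from the Poisson kernel as the paper does for its own non-decaying $\tilde v_\ep$. The issue is latent in the paper's proof as well, but since you cite the Fourier lemma explicitly it is worth flagging.
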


\begin{proof}
Fix a domain $\Omega=(-1,1)\times(0,1)$, and for each $\epsilon>0$ consider the scaling $\Lambda_\ep>0$ given by \eqref{scaling-lambda}.
Set $v_\ep:(-1,1)\to \R$ to be the function that satisfies $v_\ep(x)=0$ if $x\in(-1,-\Lambda_\ep/2]$, $v_\ep(x)=1$ if $x\in[\Lambda_\ep/2,1)$, and linear in the interval $[-\Lambda_\ep/2,\Lambda_\ep/2]$. Extend it to $\R$, by making it constant on $(-\infty,-1)$ and $(1,+\infty)$, and denote this extension by $\tilde v:=\tilde v_\ep$.
Now construct a function $u:=u_\ep$ defined on $\R^2_+$ with trace $\tilde v$ as  $u=P*_x\tilde v$.
Corollary \ref{cor-sharp-halfplane} tells that for $u$ constructed this way,
\be\label{eq5}
\int_{\mathbb R^2_+} \grad u 2 w_a \,dxdy=D_s^{-1}\int_{\R\times\R}\frac{\abs{\tilde v(x)- \tilde v(x')}^2}{\abs{x-x'}^{1+2s}}\,dxdx'.
\ee
We try now to restrict the domain of integration to $\Omega$ by estimating the remaining terms.
Let $\Omega_\ep:=((-1,-\Lambda_\ep/2]\cup[\Lambda_\ep/2,1))\times (\Lambda_\ep/2,1)$.
A straightforward computation from the Poisson formula
\bee u(x,y)=C\int_{\R} \frac{y^{1-a} \tilde v(\xi)}{\lp\abs{x-\xi}^2+\abs{y}^2\rp^{(2-a)/2}} \,d\xi\eee
gives that
$$\int_{\Omega_\ep} \grad u 2 w_a \,dxdy=\frac{C}{\Lambda_\ep^{2s-1}}+o\lp\frac{1}{\Lambda_\ep^{2s-1}}\rp,$$
while
$$\int_{\R^2_+} \grad u 2 w_a \,dxdy-\int_{\Omega_\ep} \grad u 2 w_a \,dxdy=o\lp\frac{1}{\Lambda_\ep^{2s-1}}\rp.$$

On the other hand, let $A_\ep=(-1,\Lambda_\ep/2]$, $B_\ep=[\Lambda_\ep/2,1)$. We also have
$$\int_{A_\ep\times B_\ep}\frac{\abs{\tilde v(x)-\tilde v(x')}^2}{\abs{x-x'}^{1+2s}}\,dxdx'=\frac{C'}{\Lambda_\ep^{2s-1}}+o\lp\frac{1}{\Lambda_\ep^{2s-1}}\rp$$
while
$$\int_{\R^2\backslash((A_\ep\times B_\ep)\cup (B_\ep\times A_\ep))}\frac{\abs{\tilde v(x)-\tilde v(x')}^2}{\abs{x-x'}^{1+2s}}\,dxdx'=o\lp\frac{1}{\Lambda_\ep^{2s-1}}\rp$$
Because of equality \eqref{eq5}, the previous estimates, and the fact that
$$\ep^{1-a}\frac{1}{\Lambda_\ep^{2s-1}}=1$$
we conclude that the $u_\ep$, $v_\ep$ we have constructed satisfy \eqref{eq10}.
\end{proof}

\begin{remark}
We have constructed a piecewise linear function. However, the above proposition is still true as long as the transition occurs on a layer of length $\Lambda_\ep$.
\end{remark}


\section{Preliminary results}\label{section-preliminary}

\setcounter{equation}{00}

We are going to prove the main theorem \ref{main-theorem} by localizing the functional into several regions. Section \ref{section-interior} takes care of the behavior of the functional in the interior of the domain $\Omega$. Thus, it remains to study the interaction the interaction with the wall of the container $\partial\Omega$.
In the present section we give some preliminary results in that regard.

First, we look at a small neighborhood of $x\in\partial\Omega$, and reduce the problem to the study of a small  neighborhood $B_r\times (0,r)\subset \mathbb R^3_+$. Then we reduce the dimension from three to two through a slicing argument, so that it is enough to consider a subdomain $\Omega=(-r,r)\times (0,r)\subset \R^2_+$,  and thus, the results of sections \ref{section-perturbation} and \ref{section-traces} can be applied.

Let us remind the reader that the Gamma-limit is going to be expressed in terms of the functional \eqref{functional-limit-Phi}, whose exact expression is
$$\Phi(u,v):=\sigma\HH^2(S_u)+\int_{\partial\Omega} \abs{\mathcal W(Tu)-\mathcal W(v)}+\kappa_s\HH^1(S_v).$$
The first term in $\Phi$ comes from the behavior in the interior (section \ref{section-interior}). In proposition \ref{prop-wall} we will consider the ``wall effect", that explains the presence of the second term; while the remaining of the section is devoted to the ``boundary effect", that deals with the third term.\\

We define the localization of  the functional $F^a_\ep$ as follows. For every open set $A\subset \Omega$ and any $A'\subset \partial A$, we set
\be\label{localization-functional}
F^a_\ep[u,A,A']:=\ep^{1-a}\int_{A}\grad u 2 w_a+\frac{1}{\ep^{1-a}}\int_{A}W(u)w_a^{-1}+\lambda_\ep\int_{A'}V(Tu).
\ee
Note that we easily recover the original functional as $F^a_\epsilon[u]=F_\epsilon^a[u,\Omega,\partial\Omega]$.

\subsection{Reduction to the flat case}

Fixed $x\in\partial\Omega$, we will consider small ``cubical" neighborhoods $Q_r(x)$ near the boundary, of size $r$, such that $\Omega\cap Q_r(x)$ is equivalent to the cylinder $D_r:=B_r\times (0,r)$. Here $B_r$ is the two-dimensional ball of radius $r$ centered an the origin. Let $E_r$ be the boundary part given by $E_r:=B_r\times \{0\}$.
In order to evaluate the error in the deformation, we need to introduce the notion of isometry defect.

Given two domains $A_1,A_2\subset \mathbb R^3$ and a bi-Lipschitz homeomorphism $\Psi:\bar A_1 \to \bar A_2$, the isometry defect $\delta(\Psi)$ of $\Psi$ is the smallest constant $\delta$ such that
\be\label{defi-isometry-defect}\dist(D\Psi(x),O(3))\leq \delta \quad\mbox{for a.e. }x\in A_1.\ee
Here $O(3)$ is the set of linear isometries on $\mathbb R^3$, and $D\Psi(x)$ is regarded as a linear mapping of $\mathbb R^3$ into $\mathbb R^3$. Let $I$ be the identity map on $\mathbb R^3$. The distance between linear mappings is induced by the norm $\norm{\cdot}$, which for every linear map $T$ is defined as
$$\norm{T}=\sup_{\abs{v}\leq 1} \abs{Tv}.$$

The following proposition shows that the localized energy $F^a_\ep[u,\Omega\cap Q_r(x),\partial\Omega\cap Q_r(x)]$ can be replaced by the energy $F^a_\ep [u, D_r, E_r]$.

\begin{prop}\label{prop-flatten}
Let $\Omega$ be a domain in $\mathbb R^3$ with $\mathcal C^2$ boundary $\partial\Omega$ in $\mathcal C^2$. Then for every $x\in\partial\Omega$ and every positive $r$ smaller than a certain critical value $r_x>0$, there exists a diffeomorphism $\Psi_r:\bar D_r \to \overline{\Omega\cap Q_r(x)}$ such that
\begin{enumerate}
\item $\Psi_r$ takes $D_r$ onto $\Omega\cap Q_r(x)$ and $E_r$ onto $\partial\Omega \cap Q_r(x)$.
\item $\Psi_r$ is of class $\mathcal C^1$ on $D_r$  and $\norm{D\Phi_r -Id}\leq \delta_r$ everywhere on $D_r$, where $\delta_r \to 0$ as $r\to 0$.
\end{enumerate}
In particular, the isometry defect of $\Psi_r$ vanishes as $r\to 0$.
Moreover,
$$F^a_\epsilon[u,Q_r(x)\cap\Omega,Q_r(x)\cap\partial\Omega]
\geq \lp 1-\delta(\Psi_r)\rp^{5} F_\ep^a[u \circ \Psi_r,D_r,E_r].$$
\end{prop}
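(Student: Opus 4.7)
The plan is to build $\Psi_r$ as an explicit boundary-straightening map coming from the $\mathcal{C}^2$ local graph representation of $\partial\Omega$, and then to push each piece of $F^a_\ep$ through the change of variables, tracking errors via the isometry defect $\delta(\Psi_r)$.

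I first pick Euclidean coordinates centered at $x$ so that the inward normal to $\partial\Omega$ at $x$ is the $y$-axis and the tangent plane is $\{y=0\}$. By the $\mathcal{C}^2$ regularity of $\partial\Omega$ there exists $r_x>0$ such that for $r<r_x$ the boundary is locally the graph $y=g(x')$ of a $\mathcal{C}^2$ function $g:B_r\to\R$ with $g(0)=0$, $\nabla g(0)=0$, and $\Omega$ corresponds locally to $\{y>g(x')\}$. I set
\bee
\Psi_r(x',y):=(x',y+g(x')),\qquad (x',y)\in\bar D_r,
\eee
and \emph{define} $Q_r(x):=\Psi_r(\bar D_r)$, so property (1) holds by construction. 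Since $D\Psi_r-I$ sends $(\xi',\eta)$ to $(0,\nabla g(x')\cdot\xi')$, one has $\|D\Psi_r-I\|_{L^\infty(D_r)}\le\|\nabla g\|_{L^\infty(B_r)}=:\delta_r$, and $\delta_r\to 0$ as $r\to 0$ by continuity of $\nabla g$ at the origin. This yields (2) and also the bound $\delta(\Psi_r)\le\delta_r$.

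For the energy inequality, set $\tilde u:=u\circ\Psi_r$ and $\delta:=\delta(\Psi_r)$. Because $\|D\Psi_r-I\|\le\delta$, every singular value of $D\Psi_r$ lies in $[1-\delta,1+\delta]$, giving the volumetric Jacobian bound $|\det D\Psi_r|\ge(1-\delta)^3$ on $D_r$ and the surface Jacobian bound $|\det_{E_r}D\Psi_r|\ge(1-\delta)^2$ on $E_r$; moreover, from $\nabla\tilde u=(D\Psi_r)^T(\nabla u\circ\Psi_r)$ and $\|(D\Psi_r)^T\|\le 1+\delta$, one gets the pointwise estimate $\grad{u}{2}\circ\Psi_r\ge(1+\delta)^{-2}\grad{\tilde u}{2}$. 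It then remains to compare the weight $h\circ\Psi_r$ with the flat coordinate $y$: integrating $D\Psi_r$ along line segments produces the bi-Lipschitz inequality $(1-\delta)|z_1-z_2|\le|\Psi_r(z_1)-\Psi_r(z_2)|\le(1+\delta)|z_1-z_2|$, so immediately $h(\Psi_r(x',y))\le|\Psi_r(x',y)-\Psi_r(x',0)|\le(1+\delta)y$, and, provided $r<r_x$ is small enough that the nearest-point projection of $\Psi_r(z)$ onto $\partial\Omega$ lies inside $\Psi_r(\bar E_r)$, one obtains the matching lower bound $h(\Psi_r(x',y))\ge(1-\delta)y$. Since $-1<a<0$ and $|a|<1$, this yields both $h^a\circ\Psi_r\ge(1-\delta)y^a$ and $h^{-a}\circ\Psi_r\ge(1-\delta)y^{-a}$. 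Substituting these bounds into the change of variables formula for each of the three terms of $F^a_\ep[u,Q_r(x)\cap\Omega,Q_r(x)\cap\partial\Omega]$ and multiplying out the $(1-\delta)$ factors produces the claimed inequality with overall factor $(1-\delta)^5$.

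The main technical obstacle is the distance comparison $h\circ\Psi_r\simeq y$, specifically the lower bound: one must argue that, for $r$ below a critical value $r_x$ depending on the $\mathcal{C}^2$ norm of $\partial\Omega$ near $x$ (equivalently, on the local reach of the hypersurface), the nearest-point projection of $\Psi_r(z)\in Q_r(x)\cap\Omega$ onto $\partial\Omega$ stays inside $\Psi_r(\bar E_r)$; otherwise one would have to account for contributions to $h$ coming from pieces of $\partial\Omega$ outside the chart. Everything else — the singular-value estimates for $D\Psi_r$, the algebra of the pullback in the three terms of the functional, and the accounting of powers of $(1\pm\delta)$ — is routine once the correct $\Psi_r$ is in place.
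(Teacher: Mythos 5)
Your proposal is correct in spirit, but it takes a genuinely different route from the paper, and the difference is precisely at the point you flag as ``the main technical obstacle.'' The paper does not use graph coordinates $(x',y)\mapsto(x',y+g(x'))$; instead it parameterizes $\Omega\cap Q_r(x)$ by $(t,\rho)$, where $\rho:=\dist(\cdot,\partial\Omega)$ and $t$ runs over the level sets $\{\rho=\text{const}\}$, i.e.\ Fermi coordinates adapted to the distance function. The payoff is that the weight pulls back \emph{exactly}: $h(\Psi_r(t,\rho))=\rho$, so no comparison $h\circ\Psi_r\simeq y$ is ever needed, and the entire error budget lives only in the gradient factor $(1+\delta)^2$ and the Jacobian factor $(1+\delta)^3$, whose product gives the stated $(1-\delta)^5$ after inversion. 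Your graph-coordinate construction works, but (i) it requires the extra lemma $(1-\delta)y\le h\circ\Psi_r\le(1+\delta)y$, including the nearest-point-projection argument for the lower bound, which the Fermi coordinates make vacuous; and (ii) if you use the generic singular-value bounds you actually list ($|\nabla u|^2\circ\Psi_r\ge(1-\delta)^2|\nabla\tilde u|^2$, $|\det D\Psi_r|\ge(1-\delta)^3$, $h^a\circ\Psi_r\ge(1-\delta)y^a$), the gradient term accumulates $(1-\delta)^6$, not $(1-\delta)^5$ — your claimed total miscounts by one power. The way to rescue the exponent in your construction is to notice that your specific $\Psi_r$ is unipotent, so $\det D\Psi_r\equiv 1$ and the surface Jacobian on $E_r$ is $\sqrt{1+|\nabla g|^2}\ge 1$; with these exact values the three terms pick up at most $(1-\delta)^3$, $(1-\delta)$, and $1$ respectively, which is strictly better than $(1-\delta)^5$. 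In short: the paper trades a slightly more implicit chart (whose regularity comes from the $\mathcal C^2$ regularity of the distance function) for an exact transformation of the singular weight, while you trade an explicit chart for an extra metric estimate; both reach the conclusion, but the power $5$ is genuinely the paper's bookkeeping, and yours only matches it if you use the specific algebraic structure of the graph map rather than the generic isometry-defect bounds.
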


\begin{proof}
It is essentially contained in propositions 4.9 and 4.10 from \cite{Alberti-Bouchitte-Seppecher:Phase-transition}, although in our case we need to make some modifications due to the presence of a weight.

Because of the smoothness assumptions on $\partial\Omega$, we can parameterize a small neighborhood of $x\in\partial\Omega$ with coordinates $(t,\rho)$, where $\rho=\dist(\cdot,\partial\Omega)$,  $t$ is the coordinate parameterizing each level set of $\rho$ and such that $\rho\in(0,r)$, and $t\in B_r$. The change of coordinates map, call it $\Psi_r$ is a diffeomorphism with $D\Psi_r (x)=I$. It is clear than \be\label{isometry-defect}\delta:=\delta(\Psi_r)\to 0,\quad \mbox{when } r\to 0.\ee
Moreover,
\be\label{change-variable-grad}\abs{D(u\circ \Psi_r)}\leq (1+\delta)\abs{(Du)\circ \Psi_r}.\ee
Let $J$ be the Jacobian determinant of $\Psi_r$ on $D_r$ and $J'$ be the one of $\Psi_r|_{E_r}$. They satisfy $\abs{J}, \abs{J'}\leq (1+\delta)^{3}$ a.e.. Using the change of variable formula and \eqref{change-variable-grad} we see that
\bee\begin{split}
F^a_\ep  [u \circ \Psi_r,D_r,E_r]&=
 \ep^{1-a}\int_{D_r}\grad {(u\circ\Psi_r)} 2 \rho^a J d\rho dt\\
& +\frac{1}{\ep^{1-a}}\int_{D_r}W(u\circ \Psi_r)\rho^{-a}Jd\rho dt +\lambda_\ep\int_{E_r}V(T(u\circ \Psi_r))J'dt \\
& \leq  \lp 1+\delta(\Psi_r)\rp^{5} F^a_\epsilon [u,Q_r(x)\cap\Omega,Q_r(x)\cap\partial\Omega], \\
\end{split}\eee
as we wished, because $\frac{1}{1+\delta}\geq 1-\delta$.
\end{proof}

\begin{remark}
The regularity of $\partial\Omega$ could be reduced to $\mathcal C^{1,\alpha}$ and still have
\eqref{isometry-defect}.
\end{remark}

\bigskip


\subsection{Dimension reduction}

The next step is to reduce the problem from three to two dimensions through slicing. Before we give the main result, we state  some classical slicing results in $\R^n$:

We fix $m>0$ and assume that every function in this subsection takes values in $[-m,m]$.
Let $A$ be a bounded open subset of $\mathbb R^n$, $e$ is a unit vector in $\mathbb R^n$ and $u$ a function on $A$. We denote by $M$ the orthogonal complement of $e$, and by $A_e$ the projection of $A$ onto $M$.  For every $z\in M$, we set $A_e^z:=\{t\in \mathbb R :  z+te\in A\}$; and $u_e^z$ to be the trace of $u$ on $A^z_e$, that is $u_e^z:=u(z+te)$.

\begin{prop}[section 5.10, page 216, in \cite{Evans-Gariepy}]
\label{prop-Evans-Gariepy}
Let $B\subset A$ be a given Borel set. If $B$ has finite perimeter in $A$, then $B_e^z$ has finite perimeter in $A_e^z$ and $\partial(B_e^z\cap A_e^z)=(\partial B\cap A)_e^z$ for a.e. $z\in A_e$, and
\be\label{formula-Evans-Gariepy}
\int_{A_e} \mathcal H^0(\partial B_e^z \cap A^z_e)dz=\int_{\partial B\cap A} \langle v_B,e\rangle.\ee
Conversely, $B$ has finite perimeter in $A$ if there exist $n$ linearly independent unit vectors $e$ such that the integral of $\mathcal H^0(\partial B_e^z \cap A_e^z)$ over all $z\in A_e$ is finite.
\end{prop}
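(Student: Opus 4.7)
The plan is to treat this as a standard slicing/coarea statement for sets of finite perimeter and give a sketch rooted in De Giorgi's structure theorem together with the one-dimensional BV slicing formula. The first step is to translate finite perimeter into a BV statement: $B$ has finite perimeter in $A$ iff $\chi_B \in BV(A)$, and the perimeter measure is $|D\chi_B|$, with $D\chi_B = \nu_B\,\mathcal H^{n-1}\llcorner \partial^* B$ by the structure theorem for sets of finite perimeter. In particular, for any unit vector $e$,
\begin{equation*}
|\langle D\chi_B, e\rangle|(A) \;=\; \int_{\partial^* B \cap A} |\langle \nu_B, e\rangle|\, d\mathcal H^{n-1},
\end{equation*}
which gives the right-hand side of \eqref{formula-Evans-Gariepy} (up to the absolute value convention, which is the one intended in the statement).

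Next I would invoke the classical slicing theorem for BV functions (which itself is proved by smoothing $u$ and using Fubini on $u_\delta(z+te)$ before passing to the limit in the total variation). Applied to $u = \chi_B$ in the direction $e$, it asserts that for a.e.\ $z \in A_e$ the one-dimensional trace $(\chi_B)_e^z = \chi_{B_e^z}$ lies in $BV(A_e^z)$, and
\begin{equation*}
\int_{A_e} |D(\chi_B)_e^z|(A_e^z)\, dz \;=\; |\langle D\chi_B, e\rangle|(A).
\end{equation*}
For a characteristic function of one real variable, the total variation of $\chi_{B_e^z}$ on $A_e^z$ is exactly $\mathcal H^0(\partial B_e^z \cap A_e^z)$, since each essential jump contributes $1$ and the structure of $B_e^z$ as a countable union of intervals is encoded by its finite set of boundary points. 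Combining the two identities yields \eqref{formula-Evans-Gariepy}, and in particular $B_e^z$ has finite perimeter in $A_e^z$ for $\mathcal H^{n-1}$-a.e.\ $z \in A_e$.

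For the converse direction, the idea is to recover $|D\chi_B|$ from its projections. Since $n$ linearly independent unit vectors $\{e_1,\dots,e_n\}$ span $\mathbb R^n$, the finiteness of each $|\langle D\chi_B, e_i\rangle|(A)$, obtained from the hypothesis via the slicing identity applied in reverse (using monotone convergence on smooth truncations of $\chi_B$), controls the full vector measure $D\chi_B$ by a linear algebra comparison: there exists $c>0$ with $|D\chi_B|(A) \le c \sum_i |\langle D\chi_B, e_i\rangle|(A) < \infty$.

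The main obstacle is the careful justification of the slicing identity in the BV slicing theorem; this is where smoothing by convolution and the lower semicontinuity of the total variation under $L^1$ convergence are essential, since one cannot apply Fubini directly to the distributional derivative. Once this step is granted, the remaining manipulations are purely algebraic rewritings via De Giorgi's structure theorem and the one-dimensional characterization of $BV$ characteristic functions.
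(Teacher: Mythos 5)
The paper does not prove this proposition: it is quoted verbatim as a known result from Evans--Gariepy (Section 5.10) and used as a black box in the dimension-reduction argument of Proposition \ref{prop-boundary-term}. Your sketch reproduces the standard proof of exactly that slicing theorem, via De Giorgi's structure theorem, the BV slicing identity, and the one-dimensional characterization of characteristic functions in $BV$, so it is faithful to the reference the paper is leaning on rather than an alternative route. Two remarks worth recording: you correctly observe that the displayed formula \eqref{formula-Evans-Gariepy} as printed is missing an absolute value, since the left-hand side is a count of points and is therefore nonnegative while $\langle \nu_B, e\rangle$ can change sign --- the correct identity integrates $|\langle \nu_B, e\rangle|$ over the reduced boundary --- and you correctly replace $\partial B$ by the reduced boundary $\partial^* B$, which is the object that the structure theorem and the coarea/slicing formula actually govern (the topological boundary of a finite-perimeter set may have positive Lebesgue measure). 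For the converse direction your linear-algebra reduction is the right idea, but to close it cleanly you should note that finiteness of the $n$ directional variations $|\langle D\chi_B, e_i\rangle|(A)$ forces $D\chi_B$ to be a finite $\mathbb R^n$-valued Radon measure (each partial derivative is a measure, and finitely many measures assemble into a vector measure), which is precisely the definition of $\chi_B\in BV(A)$; the smooth-truncation plus lower-semicontinuity step you gesture at is the standard way to upgrade the a.e.\ one-dimensional information to this global statement.
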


From here we can establish a connection between the compactness of a family of functions in $L^1(\mathbb R^3)$ and the compactness of the traces of these functions. For every family $\mathcal F$ of functions of $A$, we set $\mathcal F_e^z:=\{u_e^z : u\in\mathcal F\}$, so that $\mathcal F_e^z$ is a family of functions on $A_e^z$. We say that a family $\mathcal F'$ is $\delta$-dense in $\mathcal F$ if $\mathcal F$ lies in a $\delta$-neighborhood of $\mathcal F'$ with respect to to the $L^1(A)$ topology. Then

\begin{thm}[theorem 6.6 in \cite{Alberti-Bouchitte-Seppecher:Phase-transition}]\label{thm-family-compactness}
Let $\mathcal F$ be a family of functions $v:A\to [-m,m]$ and assume that there exists $n$ linearly independent unit vectors $e$ which satisfy the following property:
\begin{myindentpar}{1cm}
For every $\delta>0$ there exists a family $\mathcal F_\delta$ $\delta$-dense in $\mathcal F$ such that $(\mathcal F_\delta)_e^z$ is pre-compact in $L^1(A_e^z)$ for $\mathcal H^{n-1}$ a.e. $z\in A_e$.
\end{myindentpar}
Then $\mathcal F$ is precompact in $L^1(A)$.
\end{thm}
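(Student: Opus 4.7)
The plan is to verify the Fr\'echet--Kolmogorov criterion for precompactness of $\mathcal{F}$ in $L^1(A)$. Since each $v\in\mathcal{F}$ takes values in $[-m,m]$ and $A$ is bounded, boundedness of $\mathcal{F}$ in $L^1(A)$ is automatic, and tightness is immediate once each $v$ is extended by zero outside $A$. What remains is the uniform equicontinuity of translations: for every $\epsilon>0$ there should exist $\eta>0$ such that
$$\sup_{v\in\mathcal{F}}\|\tau_h v - v\|_{L^1(\mathbb R^n)}<\epsilon \quad \text{whenever } |h|<\eta,$$
where $\tau_h v(x)=v(x+h)$.

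First I would reduce to translations along each of the $n$ linearly independent unit vectors $e_1,\ldots,e_n$ furnished by the hypothesis. Writing $h=\sum_{i=1}^n c_i e_i$ and using that translations are $L^1(\mathbb R^n)$-isometries, a telescoping estimate yields
$$\|\tau_h v - v\|_{L^1(\mathbb R^n)}\leq \sum_{i=1}^n \omega^{e_i}_v(|c_i|),\qquad \omega^e_v(t):=\|\tau_{te}v-v\|_{L^1(\mathbb R^n)},$$
and $|c_i|\to 0$ as $h\to 0$ by linear independence of the $e_i$'s. So it suffices to control $\omega^e_v(t)$ for each fixed $e=e_i$. Splitting $\mathbb R^n=e^\perp\oplus\mathbb R e$ and using Fubini,
$$\omega^e_v(t)=\int_{A_e}\omega_{v^z_e}(t)\,d\mathcal H^{n-1}(z),$$
where $\omega_g(t):=\int_{\mathbb R}|g(s+t)-g(s)|\,ds$ and the slice $v^z_e$ is understood as extended by zero outside $A^z_e$.

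Now fix $\epsilon>0$, let $\delta>0$ be chosen later, and let $\mathcal{F}_\delta$ be the $\delta$-dense family from the hypothesis. For $v\in\mathcal{F}$, pick $w\in\mathcal{F}_\delta$ with $\|v-w\|_{L^1(A)}<\delta$; then $\omega^e_v(t)\leq \omega^e_w(t)+2\delta$ by the triangle inequality. For $\mathcal H^{n-1}$-a.e.\ $z\in A_e$ the slice family $(\mathcal{F}_\delta)^z_e$ is precompact in $L^1(A^z_e)$, so Fr\'echet--Kolmogorov applied on the line yields that the slice modulus
$$\Omega_\delta(t,z):=\sup_{w\in\mathcal{F}_\delta}\omega_{w^z_e}(t)\xrightarrow[t\to 0]{}0$$
pointwise a.e.\ in $z$. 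Since $\Omega_\delta(t,z)\leq 2m\,|A^z_e|$ and $\int_{A_e}|A^z_e|\,d\mathcal H^{n-1}=|A|<\infty$, dominated convergence gives $\sup_{w\in\mathcal{F}_\delta}\omega^e_w(t)\to 0$ as $t\to 0$. Choosing first $\delta=\epsilon/4$ and then $t$ small enough yields $\sup_{v\in\mathcal{F}}\omega^e_v(t)<\epsilon$, which, combined with the $n$-direction reduction, provides the required equicontinuity and hence the desired precompactness.

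The main obstacle I expect is precisely the passage from slice-wise precompactness to uniform equicontinuity of the ambient $L^1(\mathbb R^n)$-norm: the Fr\'echet--Kolmogorov modulus associated with each slice depends a priori in a non-uniform way on the transverse variable $z$, and the crucial observation is that one always has the $z$-integrable envelope $2m|A^z_e|$, so dominated convergence patches the slice-wise estimates into a single global one. The $\delta$-denseness hypothesis then serves the standard role of absorbing the approximation error between $\mathcal{F}$ and the more regular family $\mathcal{F}_\delta$ via one triangle inequality.
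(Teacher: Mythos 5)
Your proof is correct and follows the same strategy that the cited source (Alberti--Bouchitt\'e--Seppecher, Theorem~6.6) uses; the present paper only quotes the result and does not reproduce a proof, so there is no in-paper argument to compare against. The structure --- reduce precompactness in $L^1(A)$ to the Fr\'echet--Kolmogorov translation modulus, telescope a general shift $h$ into the $n$ independent directions $e_i$, slice via Fubini, use $\delta$-denseness to replace $\mathcal F$ by $\mathcal F_\delta$, invoke the necessity direction of Fr\'echet--Kolmogorov on each a.e.\ slice, and patch with dominated convergence using the $z$-integrable envelope $2m\,|A_e^z|$ --- is exactly the natural route. The identification of dominated convergence as the mechanism that turns pointwise-in-$z$ slice information into a global modulus bound is the right key observation.

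One technical point worth tightening: you apply dominated convergence directly to $\Omega_\delta(t,z)=\sup_{w\in\mathcal F_\delta}\omega_{w_e^z}(t)$, but if $\mathcal F_\delta$ is uncountable the measurability of this supremum in $z$ needs a word (e.g.\ replace $\mathcal F_\delta$ by a countable $L^1(A)$-dense subset, which by Fubini yields a.e.\ the same slice-suprema). Cleaner still, avoid the supremum altogether: for any sequences $t_k\to 0$ and $w_k\in\mathcal F_\delta$, the integrand $z\mapsto\omega_{(w_k)_e^z}(t_k)$ is measurable for each fixed $k$, tends to $0$ for a.e.\ $z$ by the slice precompactness, and is dominated by $2m\,|A_e^z|$; dominated convergence then gives $\omega_{w_k}^e(t_k)\to 0$, which is exactly the statement that $\sup_{w\in\mathcal F_\delta}\omega_w^e(t)\to 0$. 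It is also worth saying explicitly that precompactness of $(\mathcal F_\delta)_e^z$ in $L^1(A_e^z)$ transfers to the zero-extended family in $L^1(\mathbb R)$ (zero extension is an isometric embedding onto a closed subspace), which is what licenses the application of Fr\'echet--Kolmogorov on the line.
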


Now we can give the main proposition of the subsection, where we slice a cylinder $D_r=B_r\times (0,r)\subset \R^3_+$.
We drop the subindex $r$ in the notation. We set coordinates in $D$ as $(x_1,x_2,y)$ where $x_1,x_{2}$ parameterize $B_r$ and $y\in(0,r)$. Then $E=\{y=0\}\cap \bar D$. Fix an arbitrary unit vector $e$ in the plane $\{y=0\}$, and let $E_e$ the projection of the set $E$.
We will slice $D$ in the direction of $y$, perpendicularly to $e$. The slice corresponding for each $z\in E_e$ is denoted by $D^z$; let $E^z$ its projection onto the plane $\{y=0\}$. For a nice picture, see figure 4 in \cite{Alberti-Bouchitte-Seppecher:Phase-transition}.

\begin{prop} \label{prop-boundary-term}
 Let $(u_\ep)\subset W^{1,2}(D,w_a)$ be a sequence with uniformly bounded energies  $F^a_\ep[u_\ep,D,E]$. Then the traces of $u_\ep$ are pre-compact in $L^1(E)$ and every cluster point belongs to $BV(E,I')$. Moreover, if  $Tu_\ep\to v$ in $L^1(E)$, then
\be\label{slicing0}\liminf_{\ep \to 0} F^a_\ep[u_\ep,D,E]\geq \kappa_s \abs{\int_{E\cap S_v} \nu_v} d\mathcal H^1.\ee
where the constant $\kappa_s$ is defined in \eqref{kappa}.
\end{prop}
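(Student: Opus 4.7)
The plan is to reduce the three-dimensional functional on $D$ to a one-parameter family of two-dimensional slice problems, on which Theorem~\ref{thm-trace-domain} and Proposition~\ref{theorem-convergence1} can be invoked directly. Fix a unit vector $e$ in the plane $\{y=0\}$ and let $e^\perp$ be the orthogonal unit vector in that plane; parameterize points of $D$ as $z\,e^\perp + t\,e + y\,e_y$. For each $z\in E_e$ the slice $D^z = (-\rho_z,\rho_z)\times(0,r)$ and its trace $E^z = (-\rho_z,\rho_z)\times\{0\}$ satisfy $\rho_z := \sqrt{r^2-z^2}\leq r$. Writing $\nabla' := (\partial_t,\partial_y)$ for the in-slice gradient, Fubini together with the pointwise inequality $\abs{\nabla u}^2 \geq \abs{\nabla' u}^2$ and the non-negativity of the $W$-term yields
\begin{equation*}
F^a_\ep[u_\ep, D, E] \;\geq\; \int_{E_e}\Bigl( \ep^{1-a}\int_{D^z} \abs{\nabla' u_\ep^z}^2 y^a\,dt\,dy + \lambda_\ep\int_{E^z} V(Tu_\ep^z)\,dt \Bigr)\,dz.
\end{equation*}

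The core step is to apply Theorem~\ref{thm-trace-domain} on each slice. The inequality \eqref{trace-inequality-domain} is scale invariant: under $(t,y)\mapsto (t/R,y/R)$ both sides scale as $R^{-a}$ (using $2s-1 = -a$), so the bound with constant $D_s$ holds on every cube $(-R,R)\times(0,R)$. Since $\rho_z\leq r$, the inscribed sub-cube $(-\rho_z,\rho_z)\times(0,\rho_z)\subset D^z$ is admissible, and the trace interval on the left-hand side is the full $E^z$. This gives, for a.e.~$z$,
\begin{equation*}
\int_{D^z}\abs{\nabla' u_\ep^z}^2 y^a \;\geq\; \frac{1}{D_s}\int_{(E^z)^2}\frac{\abs{Tu_\ep^z(t)-Tu_\ep^z(t')}^2}{\abs{t-t'}^{1+2s}}\,dt\,dt',
\end{equation*}
which together with the $V$-term is exactly $G^a_\ep[Tu_\ep^z,E^z]$, so $F^a_\ep[u_\ep,D,E]\geq \int_{E_e} G^a_\ep[Tu_\ep^z,E^z]\,dz$.

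Compactness now follows by slicewise application of Proposition~\ref{theorem-convergence1} combined with Theorem~\ref{thm-family-compactness}. Performing the above slicing for two linearly independent choices of $e$ and, for each $\delta>0$, using Chebyshev to isolate a set of $z$'s of measure at most $\delta$ outside of which the slice energies are uniformly bounded, I obtain a $\delta$-dense subfamily whose slices are pre-compact in $L^1(E^z)$ for a.e.~$z$; Theorem~\ref{thm-family-compactness} then promotes this to $L^1(E)$-precompactness of $\{Tu_\ep\}$ with cluster points in $BV(E,I')$. For the liminf bound, if $Tu_\ep\to v$ in $L^1(E)$ then along a subsequence $Tu_\ep^z\to v^z$ in $L^1(E^z)$ for a.e.~$z$; Proposition~\ref{theorem-convergence1} gives $\liminf_\ep G^a_\ep[Tu_\ep^z,E^z]\geq \kappa_s\,\HH^0(S_{v^z})$, and Fatou combined with the slicing formula \eqref{formula-Evans-Gariepy} applied to the finite-perimeter set $\{v=\beta'\}\subset E$ yields
\begin{equation*}
\liminf_{\ep\to 0} F^a_\ep[u_\ep,D,E] \;\geq\; \kappa_s \int_{E_e}\HH^0(S_{v^z})\,dz \;=\; \kappa_s\int_{E\cap S_v}\abs{\langle\nu_v,e\rangle}\,d\HH^1.
\end{equation*}
This is the direction-$e$ version of \eqref{slicing0}; the full statement follows by a standard Borel-partition argument localizing near Lebesgue points of $\nu_v$ on $S_v$ and summing over an orthonormal pair in $\{y=0\}$.

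The principal obstacle is the application of Theorem~\ref{thm-trace-domain} slicewise, since its hypothesis is stated for a unit cube: the scale invariance of the sharp constant $D_s$ together with the inclusion of a cube of side $\rho_z$ inside $D^z$ is precisely what preserves the constant when passing from the 3D energy to the $H^s$-seminorm on the full trace $E^z$. A secondary delicate point is that the Fubini bound on slice energies is only integral, not uniform in $z$, so the compactness statement requires the $\delta$-dense-family formulation of Theorem~\ref{thm-family-compactness} rather than a direct a.e.~extraction.
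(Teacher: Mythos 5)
Your proof is correct and follows essentially the same route as the paper's: Fubini slicing of $D$ in a direction $e$, slicewise application of the rescaled trace inequality of Theorem~\ref{thm-trace-domain} to reduce to the one-dimensional functional $G^a_\ep[Tu^z_\ep,E^z]$, a Chebyshev-produced $\delta$-dense family combined with Theorem~\ref{thm-family-compactness} for compactness, and Fatou plus the coarea-type slicing formula \eqref{formula-Evans-Gariepy} followed by optimizing over $e$ for the liminf bound. The one point where you are slightly more explicit than the paper is in handling the slices $D^z=(-\rho_z,\rho_z)\times(0,r)$ being rectangles rather than squares, via the inscribed sub-cube $(-\rho_z,\rho_z)\times(0,\rho_z)$ whose trace is the full $E^z$; the paper only asserts scale invariance without spelling this out, so your observation is a useful clarification rather than a deviation.
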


\begin{proof}
We slice the region $D$ using Fubini's theorem, so then
\be\label{fubini}\begin{split}
F^a_\epsilon[u_\ep,D,E] & \geq \ep^{1-a}\int_D \grad {u_\epsilon} 2 w_a+\lambda_\ep \int_E V(Tu) \\
& \geq \int_{E_e} \lp \ep^{1-a}\int_{D^z}\grad {u_\epsilon ^z} 2 y^a+\lambda_\ep\int_{E^z} V(Tu_\epsilon^z)\rp dz.
\end{split}\ee
Next, we use the trace inequalities of section \ref{section-traces} in each domain $D^z$, $E^z$ (note that the inequalities are invariant by rescaling). Indeed, because of \eqref{trace-inequality-domain}, we have the estimate
\be\label{slicing1}\begin{split}
F^a_\epsilon[u_\ep,D,E] & \geq \int_{E_e} \lp \ep^{1-a}D_s^{-1}\int_{E^z\times E^z}\frac{\abs{Tu^z(x)-Tu^z(x')}^2}{\abs{x-x'}^{1+2s}}+\lambda_\ep\int_{E^z} V(Tu^z)\rp dz\\
& =\int_{E_e} G^a_\ep[Tu^z,E^z]dz.
\end{split}\ee
This last functional $G^a_\epsilon$ has been well studied in section \ref{section-perturbation} when $E^z$ is an interval in $\R$.

The rest of the proof follows exactly as proposition 4.7 in \cite{Alberti-Bouchitte-Seppecher:Phase-transition}. For commodity of the reader, we give the main ideas. We check first that $(Tu_\ep)$ is pre-compact in $L^1(E)$. Thanks to theorem \ref{thm-family-compactness}, it suffices to show that the family $\mathcal F:=(Tu_\epsilon)$ satisfy the following property: for every $\delta>0$, there exists a family $\mathcal F_\delta$ $\delta$-dense in $\mathcal F$ such that $(\mathcal F_\delta)_e^z$ is pre-compact in $L^1(E)$ for $\mathcal H^2$-a.e. $z \in E_e$. By assumption $F^a_\epsilon[u_\epsilon,D,E]\leq C$, so that \eqref{slicing1} implies also that
\be\label{slicing2}\int_{E_e} G^a_\epsilon[u_\epsilon^z, E^z]\leq C.\ee
Fix $\delta>0$.  For every $\delta>0$, define $v_\epsilon:E\to [-m,m]$ such that
\be\label{slicing3}
v_\ep^z:=
\left\{
  \begin{array}{ll}
    Tu_\epsilon^z, & \hbox{if }z\in E_e \mbox{ and }G^a_\ep[u_\epsilon^z, E^z]\leq 2mrC/\delta \\
    \alpha', & \hbox{otherwise.}
  \end{array}
\right.
\ee
By \eqref{slicing1}, \eqref{slicing2}, \eqref{slicing3}, we have $v_\epsilon^z=Tu_\epsilon^z$ for all $z\in E_e$ apart from a subset of measure smaller than $\delta/(2mr)$. Hence $v_\epsilon=Tu_\epsilon$ in $E$ up to a set of measure smaller than $\delta/m$. So, from $\abs{Tu_\epsilon}\leq m$, we deduce that $\norm{v_\epsilon-Tu_\epsilon}_{L^1(E)}\leq\delta$.
Therefore, the family $\mathcal F_\delta:=\{v_\epsilon\}$ is $\delta$-dense in $\mathcal F$.

Next, by \eqref{slicing3}, we have that $G^a_\ep[v_\ep^z,E^z]\leq 2mrC/\delta$ for every $z\in E_e$ and every $\ep$. Hence, theorem \ref{theorem-convergence1} implies that the sequence $(v_\epsilon^z)$ is pre-compact in $L^1(E^z)$. Thus $\mathcal F$ satisfies the hypothesis of theorem \ref{thm-family-compactness} for every $e$, and the sequence $(Tu_\epsilon)$ is pre-compact in $L^1(E)$.

It remains to prove that if $Tu_\epsilon \to E$ in $L^1(E)$, then $v$ belongs to $BV(E,I')$, and inequality  \eqref{slicing0} holds. We have (up to a subsequence), that $Tu_\epsilon^z \to v^z$ in $L^1(E)$ for a.e. $z\in E_e$ (remark 6.7 in \cite{Alberti-Bouchitte-Seppecher:Phase-transition}). Then, proposition \ref{theorem-convergence1} yields $v^z\in BV(E^z,I')$ and
\be\label{slicing10}
\liminf_{\ep\to 0} F^a_\epsilon[u_\ep,D,E]\geq \int_{E_e} \kappa_s \HH^0(Sv^z)dz.
\ee
The right hand side of the formula above is finite, so then proposition \ref{prop-Evans-Gariepy} implies that $v$ belongs to $BV(E,I')$, and that $S_{v^z}$ agrees with $S_v\cap E^z$ for a.e. $z\in E_\ep$. Then, by \eqref{formula-Evans-Gariepy} we may rewrite \eqref{slicing10} as
\bee
\liminf_{\ep\to 0} F^a_\epsilon[u_\ep,D,E]\geq \kappa_s \int_{E\cap S_v} \langle \nu_v, e\rangle dz.
\eee
Finally, \eqref{slicing0} follows by choosing a suitable unit vector $e$ in the expression above.

\end{proof}

\bigskip


\subsection{The boundary effect}\label{subsection-boundary-effect}

In the previous arguments we have reduced the dimension from three to two, so it is enough to understand the following functional on $\mathbb R^2_+$. Set $D=(-1,1)\times (0,1)$, $E=(-1,1)$, we define
$$H^a_\ep[w, D, E]:=\ep^{1-a}\int_{D} \grad {w} 2 y^a \;dxdy+\lambda_\ep\int_E V(Tw).$$
Trace inequalities allow to quickly relate this functional to the optimal profile obtained in proposition \ref{prop-optimum-profile}. This link is precisely the missing ingredient in \cite{Palatucci:Gamma-convergence} for the superquadratic case.

Let $\phi:\mathbb R\to [\alpha',\beta']$ be such optimal profile. It achieves the infimum of
$$\kappa_s:=\inf \left\{ G_1^a[v,\mathbb R] :
v\in H^s(\mathbb R),\; \lim_{x\to -\infty} v(x)=\alpha',\; \lim_{x\to +\infty} v(x)=\beta'\right\}.$$
Now let $w_1:=P *_x \phi$. We rescale
\be\label{function-boundary}
w_\ep(x,y):=w_1\lp \frac{x}{\Lambda_\ep},\frac{y}{\Lambda_\ep}\rp,\ee
and
$$\phi_\ep(x):=\phi\lp\frac{x}{\Lambda\ep}\rp,$$
Note that $w_\ep=P *_x \phi_\ep$.

\begin{lemma}\label{lemma-boundary-effect}
In the hypothesis above, we have that
$$H^a_\ep[w_\ep,D,E]=\kappa_s+o(1), \quad \mbox{when }\ep \to 0.$$
\end{lemma}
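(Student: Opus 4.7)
The plan is to exploit the scale-invariance of the localized functional to reduce the computation to an exhaustion argument over $\R^2_+$, and then apply the sharp trace equality from Corollary \ref{cor-sharp-halfplane}.

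First I would perform the rescaling $(\tilde x,\tilde y) = (x/\Lambda_\ep, y/\Lambda_\ep)$, natural in view of the definition \eqref{function-boundary} of $w_\ep$. From $\nabla w_\ep(x,y) = \Lambda_\ep^{-1} (\nabla w_1)(x/\Lambda_\ep, y/\Lambda_\ep)$, the Jacobian $\Lambda_\ep^2$, and $y^a = \Lambda_\ep^a \tilde y^a$, a direct computation yields
\bee
\ep^{1-a} \int_D \grad {w_\ep} 2 y^a\, dx\, dy = \ep^{1-a}\Lambda_\ep^a \int_{D/\Lambda_\ep} \grad {w_1}2 \tilde y^a\, d\tilde x\, d\tilde y.
\eee
The scaling \eqref{scaling-lambda} is designed precisely so that $\ep^{1-a}\Lambda_\ep^a = 1$. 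Similarly, since $Tw_\ep(x) = \phi(x/\Lambda_\ep)$, changing variables in the boundary integral and using $\lambda_\ep\Lambda_\ep = 1$ from \eqref{condition-lambda} gives
\bee
\lambda_\ep \int_E V(Tw_\ep)\,dx = \int_{-1/\Lambda_\ep}^{1/\Lambda_\ep} V(\phi(\tilde x))\, d\tilde x.
\eee
Summing the two contributions produces an expression in which the test function $w_1$ is fixed and only the domains depend on $\ep$.

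Next I would pass to the limit $\ep\to 0$. Since $\Lambda_\ep\to 0$, the rescaled domain $D/\Lambda_\ep = (-1/\Lambda_\ep, 1/\Lambda_\ep)\times(0,1/\Lambda_\ep)$ increases to $\R^2_+$, and the interval $(-1/\Lambda_\ep, 1/\Lambda_\ep)$ increases to $\R$. Because $\phi$ attains the optimal-profile infimum by Proposition \ref{prop-optimum-profile}, both integrals $\int_\R V(\phi)$ and (via Corollary \ref{cor-sharp-halfplane}) $\int_{\R^2_+} \grad {w_1}2 y^a$ are finite, so monotone convergence gives
\bee
\lim_{\ep\to 0} H^a_\ep[w_\ep,D,E] = \int_{\R^2_+} \grad {w_1}2 y^a\, dx\,dy + \int_\R V(\phi)\, dx.
\eee

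Finally I would identify this limit with $\kappa_s$. Since $w_1 = P *_x \phi$ is the half-plane Dirichlet minimizer with trace $\phi$, Corollary \ref{cor-sharp-halfplane} gives the equality
\bee
\int_{\R^2_+} \grad {w_1}2 y^a\,dx\,dy = \frac{1}{D_s} \int_{\R\times\R} \frac{\abs{\phi(x)-\phi(x')}^2}{\abs{x-x'}^{1+2s}}\,dx\,dx',
\eee
and inserting this into the previous display reproduces exactly the expression defining $\kappa_s$ in \eqref{optimal-profile-problem}. No serious obstacle arises: the argument rests on the two scaling identities $\ep^{1-a}\Lambda_\ep^a = 1$ and $\lambda_\ep\Lambda_\ep = 1$, combined with the sharp Poisson-extension trace equality of Corollary \ref{cor-sharp-halfplane}.
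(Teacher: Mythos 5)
Your proposal is correct and follows essentially the same route as the paper: rescale by $\Lambda_\ep$ (which by design kills both prefactors $\ep^{1-a}\Lambda_\ep^a=1$ and $\lambda_\ep\Lambda_\ep=1$), let the rescaled domains exhaust $\R^2_+$ and $\R$ via monotone convergence, and invoke Corollary~\ref{cor-sharp-halfplane} to identify the limit with $G^a_1[\phi,\R]=\kappa_s$. You merely spell out the change-of-variables arithmetic and the exhaustion argument in more detail than the paper, which states them as "we can compute explicitly."
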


\begin{proof}
We note that, because of our rescaling,
$$H^a_\ep[w_\ep,D,E]=H_1^a[w_1,D/\Lambda_\ep, E/\Lambda_\ep].$$
But we can compute explicitly that
$$H^a_1[w_\ep,D/\Lambda_\ep,E/\Lambda_\ep]=H^a_1[w_1,\mathbb R^2_+,\mathbb R]-o(1).$$
On the other hand, because of the definition of $w_1$, we have equality in corollary  \ref{cor-sharp-halfplane}, and thus
$$H^a_1[w_1,\mathbb R^2_+,\mathbb R]=\frac{1}{D_s}\int_{\R^2}\frac{\abs{\phi(x)-\phi(x')}^2}{\abs{x-x'}^{1+2s}}\;dxdx'+\int_{\mathbb R} V(\phi)\;dx=G^a_1[\phi,\mathbb R]=\kappa_s.$$
The lemma is proved.
\end{proof}


\subsection{The wall effect}

Here we deal with the second term in the limit functional \eqref{functional-limit-Phi}:

\begin{prop}\label{prop-wall}
Let $A\subset \Omega\subset\R^3$ be a domain with piecewise $\mathcal C^1$ boundary, and $A'=\partial A\cap \partial\Omega$ with Lipschitz boundary. Let $u\in BV(A,I)$, $v\in BV(A',I')$ be given. Then
\begin{itemize}
\item [i.]For every sequence $(u_\ep)\subset W^{1,2}(A,w_a)$ such that $u_\ep\to u$ in $L^1(A)$ and $Tu_\ep\to v$ in $L^1(A')$,
$$\liminf_{\ep\to 0} E^a_\ep[u_\ep,A]\geq \int_{A'} \abs{\mathcal W(Tu)-\mathcal W(v)}.$$
\item[ii.] If $v$ is constant on $A'$ and $u$ is constant in $A$ with $u=\alpha$ or $u=\beta$, there exists a sequence $(u_\ep)$ such that $Tu_\ep=v$ in $A'$, $(u_\ep)$ converges to $u$, uniformly on every set with positive distance from $A'$ and
$$\limsup_{\ep\to 0} E^a_\ep[u_\ep,A]\leq \int_{A'} \abs{\mathcal W(Tu)-\mathcal W(v)}.$$
Moreover, the function $u_\epsilon$ may be required to be $\frac{C}{\epsilon}\lp \frac{r}{\ep}\rp^{-a}$-Lipschitz continuous in
$$A_r:=\left\{x\in A\mbox{ : } \dist(x,\partial A)\leq r\right\}.$$
\end{itemize}
\end{prop}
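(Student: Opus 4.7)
The plan is to exploit the precise cancellation between the weights $h^a$ and $h^{-a}$ in $E^a_\ep$. Introducing normal coordinates $(x',t)$ in a collar of $A'$ with $t=\dist(\cdot,\partial\Omega)=h$, the Young inequality $x_1^2+x_2^2\geq 2x_1x_2$ applied with $x_1=\ep^{(1-a)/2}|\partial_tu_\ep|h^{a/2}$ and $x_2=\ep^{-(1-a)/2}\sqrt{W(u_\ep)}h^{-a/2}$ collapses the weighted pairing to the classical Modica one:
$$\ep^{1-a}(\partial_t u_\ep)^2 h^a +\tfrac{1}{\ep^{1-a}} W(u_\ep)h^{-a}\geq 2\abs{\partial_t u_\ep}\sqrt{W(u_\ep)} = \abs{\partial_t \mathcal{W}(u_\ep)}.$$
This suggests the roadmap: slice the collar in the normal direction and reduce to one-dimensional phase transitions from $v$ at $t=0$ to $u$ in the interior, where $\mathcal{W}$ is the primitive of $2\sqrt{W}$ fixed in the introduction.

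For part (i), integrating the displayed inequality in $t$ along each slice and then in $x'$ gives, for any small fixed $t_0>0$,
$$E^a_\ep[u_\ep,A]\geq \int_{A'}\abs{\mathcal{W}(u_\ep(x',t_0))-\mathcal{W}(Tu_\ep(x'))}\,d\HH^2(x').$$
The $L^1$ convergences $u_\ep\to u$ and $Tu_\ep\to v$, Fubini to pass to a.e.\ convergence on slices (along a subsequence), continuity of $\mathcal{W}$, and Fatou's lemma then yield
$$\liminf_{\ep\to0}E^a_\ep[u_\ep,A]\geq\int_{A'}\abs{\mathcal{W}(u(x',t_0))-\mathcal{W}(v(x'))}\,d\HH^2(x').$$
Letting $t_0\to 0^+$ and using $u\in BV(A,I)$, so that $u(\cdot,t_0)\to Tu$ in $L^1(A')$, produces the claimed lower bound by dominated convergence.

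For part (ii), with $u\equiv u_0\in I$ and $v\equiv v_0\in I'$ constant, I look for an ansatz $u_\ep(x',t):=\theta(s_\ep(t))$, where $\theta$ is the monotone solution of $\theta'=\pm\sqrt{W(\theta)}$ with $\theta(0)=v_0$ and $\theta(s)\to u_0$ as $s\to\infty$. Imposing equipartition $\ep^{1-a}(\partial_tu_\ep)^2h^a=\ep^{-(1-a)}W(u_\ep)h^{-a}$ forces the weighted boundary-layer scale
$$s_\ep(t):=\frac{t^{1-a}}{(1-a)\ep^{1-a}},$$
so that the transition is confined to a layer of width $O(\ep)$. A direct calculation using equipartition and the change of variables $s=s_\ep(t)$ gives, per slice,
$$\int_0^{\infty}\lp\ep^{1-a}(\partial_t u_\ep)^2 t^a+\tfrac{1}{\ep^{1-a}} W(u_\ep)t^{-a}\rp dt=\int_0^\infty 2W(\theta(s))\,ds=\abs{\mathcal{W}(u_0)-\mathcal{W}(v_0)},$$
the last identity using $2W(\theta)=2\sqrt{W(\theta)}\,\theta'$. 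Integrating over $A'$ and truncating the profile at a threshold $S_\ep\to\infty$ so that $u_\ep\equiv u_0$ outside the collar (the truncation cost being $o(1)$ since $W(\theta(s))$ decays exponentially as $s\to\infty$, $\theta$ approaching a well of $W$) produces the matching upper bound. The trace condition $Tu_\ep=v_0$ is automatic since $\theta(0)=v_0$; uniform convergence of $u_\ep$ to $u_0$ away from $A'$ follows from $s_\ep(t)\to\infty$ for each fixed $t>0$; and the Lipschitz bound reads $|\partial_tu_\ep|=\sqrt{W(\theta(s_\ep))}/(\ep^{1-a}t^a)\leq C/(\ep^{1-a}r^a)=(C/\ep)(r/\ep)^{-a}$ on $A_r$.

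The main obstacle is identifying the correct boundary-layer scale $s_\ep(t)=t^{1-a}/((1-a)\ep^{1-a})$, which is dictated by equipartition in the presence of the degenerating weight and differs substantially from the classical unweighted choice $s=t/\ep$. Once this scale is in hand, part (i) reduces to slicing plus Fatou and part (ii) to a Modica--Mortola--type profile construction with exponential-tail cutoff, both following the template of Section~\ref{section-interior} with the weights effectively canceled.
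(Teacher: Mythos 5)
Your proposal is correct and for part (ii) it is in fact the same construction as the paper. Writing $u_\ep(x)=\theta\!\left(\frac{d(x)^{1-a}}{(1-a)\ep^{1-a}}\right)$ is exactly the paper's two-step substitution $\phi(t)=\theta(\omega)$ with $\omega=\frac{t^{1-a}}{1-a}$ followed by $u_\ep(x)=\phi(d(x)/\ep)$, and the coarea/change-of-variable computation coincides term by term. Your identification of the weighted boundary-layer scale is precisely the new ingredient the paper introduces for this proposition.

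For part (i) you take a genuinely different, more elementary route. The paper sets $w_\ep:=\mathcal W\circ u_\ep$, bounds $\int_A\abs{Dw_\ep}$ by the energy via Young, and then invokes lower semicontinuity in $BV(A)$ of the Modica functional
$$G_0(z)=\int_A\abs{Dz}+\int_{\partial A}\abs{Tz-\mathcal W(v)}\,d\HH^2,$$
after which the boundary term $\int_{\partial A}\abs{\mathcal W(Tu_\ep)-\mathcal W(v)}$ drops out since $Tu_\ep\to v$ in $L^1(A')$. You instead slice in the normal direction, use the one-dimensional FTC inequality
$\int_0^{t_0}\abs{\partial_t\mathcal W(u_\ep)}\,dt\geq\abs{\mathcal W(u_\ep(x',t_0))-\mathcal W(Tu_\ep(x'))}$
on each slice, integrate over $A'$, pass to the limit by Fatou, and then send the collar width $t_0\to 0$ using that $u(\cdot,t_0)\to Tu$ in $L^1(A')$ for $u\in BV$. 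This avoids quoting the l.s.c. of $G_0$ (which the paper states as Step 3 without proof), at the cost of needing: (a) a subsequence extraction so that a.e.\ convergence on a.e.\ slice holds simultaneously with a.e.\ convergence of the traces; (b) the Jacobian factor of the normal coordinate map, which is $1+O(t_0)$ and is absorbed in the $t_0\to0$ limit; and (c) the observation that $u_\ep\in W^{1,2}(A,w_a)\subset W^{1,1}(A)$ since $-1<a<0$, so the one-dimensional FTC applies on a.e.\ slice. None of these is a gap, but it would be worth stating them explicitly. Both approaches yield the same lower bound; the paper's route is slightly more compact here because the auxiliary functional $G_0$ and its semicontinuity are reused from Modica's boundary-contact paper, whereas yours is self-contained and makes the cancellation of the weights $h^{\pm a}$ visible at the pointwise level.

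One small remark on the Lipschitz estimate in (ii): as you note, $\abs{\partial_t u_\ep}=\sqrt{W(\theta(s_\ep(t)))}\,t^{-a}/\ep^{1-a}$, and since $-a>0$ the factor $t^{-a}$ is \emph{increasing} in $t$, so the bound $\leq C r^{-a}/\ep^{1-a}=\frac{C}{\ep}(r/\ep)^{-a}$ holds for $t\leq r$; for $t>r$ one relies on the decay of $W(\theta(s))$ as $s\to\infty$, which you correctly flag, to keep the product bounded by the same quantity. This matches the claim in the statement and the (terse) remark at the end of the paper's proof.
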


\begin{proof}
The proof is a modification of propositions 1.2 and 1.4  in \cite{Modica:phase-transitions-2}, and it is very well written for the case $p>2$ in Palatucci's PhD thesis (\cite{Palatucci:thesis}, proposition 4.3). Here we indicate the steps required, and only give the proof for the ones that require any modification.

We may assume that $E^a_\ep[u_\ep,A]\leq C$. For every $\ep>0$, let us denote
\be\label{wall1}
w_\ep(x):=(\mathcal W \circ u_\ep)(x),\quad\mbox{for }x\in A.\ee

\emph{Step 1: } We claim that $\int_{A}\abs{Dw_\ep}\leq \mbox{constant}$. Indeed, by Young's inequality we have
$$\int_A \abs{Dw_\ep}=\int_A \abs{\mathcal W'(u_\ep)}\abs{Du_\ep}=2\int_{A}\sqrt{W(u_\ep)}\abs{Du_\ep}\leq E^a_\ep[u_\ep,A]\leq C.$$

\emph{Step 2: } $w_\ep \to \mathcal W\circ u\in BV(A)$ in $L^1(A)$.\\

\emph{Step 3: } The functional
$$G_0(z):=\int_A \abs{Dz(x)}+\int_{\partial A} \abs{Tz-\mathcal W(v)}d\HH^2$$
is l.s.c. on $BV(A)$ with respect to the topology in $L^1(A)$.\\

\emph{Step 4: } Proof of statement \emph{i}. Applying lower semi-continuity of the functional $G_0$ to the sequence $w_\ep$ defined by \eqref{wall1} we obtain the following inequality
\be\label{wall2}
\begin{split}
\int_A \abs{D(\mathcal W \circ u)(x)} &+\int_{\partial A}\abs{\mathcal W(Tu)-W(v)}d\HH^2\\
& \leq \liminf_{\ep\to 0}\lp \int_A \abs{D(\mathcal W \circ u_\ep)(x)}+\int_{\partial A}\abs{\mathcal W(Tu_\ep)-W(v)}d\HH^2\rp\\
& \leq \liminf_{\ep\to 0}\lp E^a_\ep[u_\ep,A]+\int_{\partial A}\abs{\mathcal W(Tu_\ep)-W(v)}d\HH^2\rp,
\end{split}\ee
by Young's inequality. Next, since $Tu_\ep \to v$ in $L^1(A')$, we deduce that
\be\label{wall3}\liminf_{\ep \to 0} \int_{\partial A} \abs{\mathcal W(Tu_\ep)-\mathcal W(v)}d\HH^2=0.\ee
From \eqref{wall2} and \eqref{wall3} we obtain the lower bound inequality \emph{i}.\\

\emph{Step 5:} Proof of the upper bound \emph{ii}. The weight $w_a$ needs to be taken into account. Without loss of generality, consider the case $u=\beta$ and $v=\gamma$ with $\alpha<\gamma<\beta$; the other cases are similar. Let $\theta:[0,+\infty)\to[\gamma,\beta]$ be the solution of the ODE written as
\be\label{ODE-theta}\left\{\begin{split}
\theta'&=\sqrt{W(\theta)} \\
\theta (0)&=\gamma.
\end{split}\right.
\ee
Let $d(x)=\dist(x,A')=\dist(x,\partial\Omega)$. We set
$\phi(t):=\theta(\omega)$ for $\omega:=\frac{t^{1-a}}{1-a}$, and $u_\epsilon(x):=\phi\lp \frac{d(x)}{\epsilon}\rp$. Then
$$E^a_\epsilon[u_\epsilon,A]=\epsilon^{1-a}\int_A \grad {u_\epsilon} 2h^a+\frac{1}{\epsilon^{1-a}}\int_{A}W(u_\epsilon)h^{-a},$$
that can be written by the coarea formula as
\bee
E^a_\epsilon[u_\epsilon,A] =\int_{\mathbb R^+}\int_{\Sigma_{\epsilon t}}\left[ \phi'(t)^ 2 t^a+ W(\phi(t))t^{-a}\right]d\sigma dt,
\eee
and after the change $\omega=\frac{t^{1-a}}{1-a}$ we get
\bee
E^a_\epsilon[u_\epsilon,A] =\int_{\mathbb R^+}\int_{\Sigma_{\epsilon ((1-a)\omega)^{\frac{1}{1-a}}}}
\left[ \theta'(\omega)^2 +W(\theta(\omega))\right] d\sigma d\omega,
\eee
where
$\Sigma_{s}$ is the set of points in $A$ at a distance exactly $s$ from $A'$. When $\epsilon\to 0$, we know that $\Sigma_{O(\ep)}\to A'$, and thus we have that
\bee
E^a_\epsilon[u_\epsilon,A]\to \int_{\mathbb R^+}\int_{A'} \left[ \theta'(\omega)^2 +W(\theta(\omega))\right] d\sigma d\omega.
\eee
But because $\theta$ satisfies the ODE \eqref{ODE-theta}, then both terms in the above expression are equal, so the inequality $x_1^2+x_2^2\geq 2x_1 x_2$ becomes an equality and we can conclude that
$$E^a_\epsilon[u_\epsilon,A]\to \int_{A'}\int_{\mathbb R^+} 2\sqrt{W(\theta)}\theta'd\omega d\sigma=\int_{A'}\left[\mathcal W(\beta)-\mathcal W(\gamma)\right]d\sigma,$$
where we take  $\mathcal W$ to be a primitive of $2\sqrt W$. To finish the proof of the proposition, just note that
$$\sup\abs{\nabla u_\ep}\leq \frac{Cr^{-a}}{\ep^{1-a}}.$$
\end{proof}


\section{Proof of theorem \ref{main-theorem}}

\setcounter{equation}{00}

Once we have the main ingredients from the previous sections, we can give the proof of the main theorem.

\subsection{Compactness}

Let $(u_\epsilon)$ be a sequence in $W^{1,2}(\Omega,w_a)$ such that $F^a_\ep[u_\ep]$ is bounded. Using the localization defined \eqref{localization-functional} and the functional $E^a_\ep$ from \eqref{functional-interior} we know that
$$F^a_\epsilon[u_\epsilon]\geq F^a_\epsilon[u_\epsilon,\Omega,\emptyset]=E^a_\epsilon[u_\epsilon,\Omega].$$
By statement \emph{iii.} of proposition \ref{prop-interior} we conclude that $(u_\ep)$ is precompact in $L^1(\Omega)$ and there exists $u\in BV(\Omega,I)$ such that $u_\epsilon\to u$ in $L^1(\Omega)$.

It remains to prove that $(Tu_\epsilon)$ is pre-compact in $L^1(\partial\Omega)$ and every cluster point belongs to $BV(\partial\Omega,I')$. Thanks to proposition \ref{prop-flatten}, we can cover $\partial\Omega$ with finitely many ``cubes" $(Q_j)_{j\in J}$, centered on $\partial\Omega$, of radius $r_j$, such that for every $j\in J$, there exists a bi-Lipschitz map $\Psi_j$ with isometry defect $\delta(\Psi_i)<1$, which satisfies $\Psi_j(D_{r_j}\cap Q_j)=\Omega\cap Q_j$ and $\Psi_j(E_{r_j}\cap Q_j)=\partial\Omega \cap B_j$.

We show that $(T u_\ep)$ is pre-compact in $L^1(\partial\Omega\cap Q_j)$ for every $j\in J$. For every fixed $j$, let us set $u_\ep^j:=u_\ep\circ \Psi_j$. We have that
$$F_\ep^a[u_\ep,\Omega\cap Q_j,\partial \Omega\cap Q_j]\geq \lp 1-\delta(\Psi_j)\rp^5 F^a_\ep[u_\ep^j, D_{r_j}\cap B_j,E_{r_j}\cap B_j],$$
so we conclude that $F_\ep^a[u_\ep^j, D_{r_j}\cap B_j,E_{r_j}\cap B_j]$ is uniformly bounded in epsilon. Hence, the compactness of the traces $Tu_\ep^i$ in $L^1(E_{r_j})$ follows from proposition \ref{prop-boundary-term}. Finally, using the invertibility of $\Psi_j$, we have that $(Tu_\ep)$ is pre-compact in $L^1(\partial\Omega)$ and that its cluster points are in $BV(\partial\Omega, I')$.


\subsection{Lower bound inequality}

Now we continue with the proof of the theorem. Parts  \emph{i.} and \emph{iii.} follow by putting together the results in the interior (section \ref{section-interior}) and the boundary (sections \ref{section-perturbation} and \ref{section-preliminary}).

Let $(u_\ep)$ be a sequence in $W^{1,2}(\Omega,w_a)$ satisfying $u\in BV(\Omega,I)$, $v\in BV(\partial\Omega,I')$, and such that $u_\ep \to u$ in $L^1(\Omega)$, $Tu_\ep \to v$ in $L^1(\partial \Omega)$. We have to show that
\be\label{lower1}\liminf_{\ep \to 0} F^a_\ep[u_\ep]\geq \Phi(u,v),\ee
where $\Phi$ is given by \eqref{functional-limit-Phi}. Assume, without loss of generality, that $\liminf_{\ep\to 0} F^a_\ep[u_\ep]<+\infty.$

For every $\ep>0$, let $\mu_\ep$ be the energy distribution associated with $F^a_\ep$ with configuration $u_\ep$, i.e., $\mu_\ep$ is the positive measure given by
$$\mu_\ep(B):=\epsilon^{1-a}\int_{\Omega\cap B}\grad u 2 h^a+\tfrac{1}{\ep^{1-a}}\int_{\Omega\cap B} W(u)h^{-a}+\lambda_\ep\int_{\partial\Omega\cap B} V(Tu),$$
for every $B\subset \R^3$. Similarly, we define
\bee\begin{split}
& \mu^1(B):= \sigma \HH^2(S_u\cap B), \\
& \mu^2(B):= \int_{\partial\Omega \cap B} \abs{\mathcal W(Tu)-\mathcal W(v)}d\HH^2,\\
& \mu^3(B):=\kappa_s \HH^1(S_v\cap B).
\end{split}\eee
The total variation $\norm{\mu_\ep}$ of the measure $\mu_\ep$ is equal to $F^a_\ep[u_\ep]$, and $\norm{\mu^1}+\norm{\mu^2}+\norm{\mu^3}$ is equal to $\Phi(u,v)$. Note that $\norm{\mu_\ep}$ is bounded, so we can assume that $\mu_\ep$ converges in the sense of measure to some finite measure $\mu$. Then, by the lower semicontinuity of the total variation we have
$$\liminf_{\ep \to 0} F^a_\ep[u_\ep]=\liminf_{\ep \to 0} \norm{\mu_\ep}\geq \norm{\mu}.$$
Since the measures $\mu^i$ are mutually singular, we obtain the lower bound inequality \eqref{lower1} if we prove that
$$\mu\geq \mu^i \quad\mbox{for }i=1,2,3.$$
It is enough to show that $\mu(B)\geq \mu^i(B)$ for all sets $B\subset \mathbb R^3$ such that $B\cap \Omega$ is a Lipschitz domain and $\mu(\partial B)=0$.

First, because of proposition \ref{prop-interior} we have that
$$\mu(B)=\lim_{\ep\to 0} \mu_\ep(B)\geq \liminf_{\ep\to 0} F^a_\ep[u_\ep,\Omega\cap B, \emptyset]\geq
\sigma \HH^2(S_u\cap B)=\mu^1(B).$$
Similarly, we can prove that $\mu\geq \mu^2$. More precisely,
$$\mu(B)=\lim_{\ep \to 0} \mu_\ep(B)\geq \liminf_{\ep\to 0} F_\ep^a[u_\ep,\Omega\cap B,\emptyset]
\geq  \int_{\partial\Omega\cap B} \abs{\mathcal W(T(u))-\mathcal W(v)}d\HH^2=\mu^2(B)$$
where we have used proposition \ref{prop-wall} with $A=B\cap \Omega$ and $A'=B\cap \partial\Omega$.\\

The inequality $\mu\geq \mu^3$ requires a different argument. Notice that $\mu^3$ is the restriction of $\HH^1$ to the set $S_v$, multiplied by the factor $\kappa_s$. Thus, if we prove that
\be\liminf_{r\to 0} \frac{\mu(Q_r(x))}{2r}\geq \kappa_s, \quad \HH^1 \mbox{a.e. }x\in S_v,\ee
for $Q_r(x)$ as in proposition \ref{prop-flatten}, we obtain the required inequality. Note that, in any case, $\mu$ is supported on $\bar\Omega$.

Let us fix $x\in S_v$ such that there exists $\lim_{r \to 0}\frac{\mu(Q_r(x))}{2r}$, and $S_v$ has one-dimensional density equal to $1$. We denote by $\nu_v$ the unit normal at $x$. For $r$ small enough, we choose a map $\Psi_r$ as in proposition \ref{prop-flatten}. Set $\bar u_\ep:= u_\ep\circ \Psi_r$ and $\bar v:= v\circ \Psi_r$. Hence, $T\bar u_\ep \to \bar v$ in $L^1(E_r)$ and $\bar v\in BV(E_r,I')$. Moreover,
\be\begin{split}
\mu(Q_r(x)) &
=\lim_{\ep \to 0} \mu_\ep(Q_r(x))\\
& =\lim_{\ep \to 0} F_\ep^a[u_\ep,\Omega\cap Q_r(x),\partial\Omega\cap Q_r(x)]\\
& \geq \liminf_{\ep \to 0} (1-\delta(\Psi_r))^{5} F^a_\ep[\bar u_\ep,D_r, E_r].
\end{split}\ee
On the other hand, by proposition \ref{prop-boundary-term}, we have that
$$\liminf_{\ep\to 0} F^a_\ep[\bar u_\ep, D_r, E_r]\geq \kappa_s\abs{\int_{S_{\bar v}\cap E_r} \nu_v d\HH^1}.$$
Finally, notice that $\delta(\Psi_r)\to 0$ as $r\to 0$, and that
$$\abs{\int_{S_{\bar v}\cap E_r} \nu_v d\HH^1}=2r+o(r).$$
Thus we obtain that
$$\frac{\mu(Q_r(x))}{2r}\geq \kappa_s \lp 1+\frac{o(r)}{2r}\rp, \quad \mbox{as } r \to 0,$$
that implies $\mu\geq \mu^3$. The proof of the lower bound inequality is completed.

\subsection{Upper bound inequality}

For the proof of \emph{ii.}, we use a standard construction piece by piece. We will require an extension lemma

\begin{lemma}\label{lemma-extension}
Let $A$ be a domain in $\mathbb R^3$, that is contained in the strip $\{r<\dist(\cdot,\partial\Omega)<2r\}$, and let $A'\subset \partial A$, $v:A'\to [-m,m]$ a Lipschitz function.
Then, for every $\epsilon>0$, there exists an extension $u:\bar A\to [-m,m]$ such that
$$\mbox{Lip}(u)\leq \frac{1}{\epsilon^{1-a}r^a}+\mbox{Lip} (v)$$
and
$$E^a_\epsilon[u,A]\lesssim \left[\lp\epsilon^{1-a}r^{a} \mbox{Lip}(v)+1\rp^2+C_m\right]\lp \mathcal H^2(\partial A)+o(1)\rp z,\quad \mbox{as }\epsilon\to 0.$$
where
$$C_m:=\max_{t\in[-m,m]} W(t),\quad z=\min\{\norm{v-\alpha}_{L^\infty}, \norm{v-\beta}_{L^\infty}\}.$$
\end{lemma}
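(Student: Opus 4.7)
The plan is a boundary-layer construction: interpolate between a Lipschitz extension of $v$ and the nearer of the two wells $\{\alpha,\beta\}$ within a thin neighbourhood of $A'$ of carefully chosen width, and set $u$ equal to that well outside the layer. The two terms of $E^a_\ep$ will be balanced by tuning the layer thickness to $\delta:=\ep^{1-a}r^{a}z$. The crucial observation making the weighted case no harder than the unweighted one is that $A$ lies in the strip where $r<h<2r$, so $h^{\pm a}$ is comparable to $r^{\pm a}$, and the weight plays only the role of a constant rescaling of the length scale.

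Without loss of generality assume $z=\|v-\alpha\|_{L^\infty(A')}$. Extend $v-\alpha$ from $A'$ to $\R^3$ via McShane--Whitney, then truncate at $\pm z$ (truncation preserves the Lipschitz constant), and finally shift back by $\alpha$; this produces $\tilde v:\R^3\to[-m,m]$ with $\mathrm{Lip}(\tilde v)\le \mathrm{Lip}(v)$, $\tilde v|_{A'}=v$, and $|\tilde v-\alpha|\le z$ everywhere. Writing $d(x):=\dist(x,A')$ and the piecewise linear cutoff $\phi(s):=\min(s/\delta,1)$, define
\[
u(x):=\bigl(1-\phi(d(x))\bigr)\tilde v(x)+\phi(d(x))\,\alpha,\qquad x\in\bar A.
\]
Then $u\equiv\alpha$ outside $L_\delta:=\{d<\delta\}\cap A$, while inside $L_\delta$ a direct differentiation gives $|\nabla u|\le \mathrm{Lip}(v)+z/\delta=\mathrm{Lip}(v)+1/(\ep^{1-a}r^a)$, which is the required Lipschitz bound; moreover $u\in[-m,m]$ as a convex combination of elements of $[-m,m]$.

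For the energy only $L_\delta$ matters because $W(\alpha)=0$ and $\nabla u=0$ outside. The tube (coarea) estimate gives $|L_\delta|=\delta\,\HH^2(A')+o(\delta)\le \delta\,\HH^2(\partial A)(1+o(1))$ as $\ep\to 0$. Using $h^a\lesssim r^a$ and $h^{-a}\lesssim r^{-a}$ on $A$, the gradient term is bounded by $\ep^{1-a}r^a\delta\,(\mathrm{Lip}(v)+1/(\ep^{1-a}r^a))^2\HH^2(\partial A)(1+o(1))$, and substituting $\delta=\ep^{1-a}r^a z$ telescopes this to $z(\ep^{1-a}r^a\mathrm{Lip}(v)+1)^2\HH^2(\partial A)(1+o(1))$; analogously the potential term becomes $C_m z\,\HH^2(\partial A)(1+o(1))$. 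Summing these two contributions yields the desired bound.

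The only (minor) technical point is verifying that the tube formula $|L_\delta|=\delta\,\HH^2(A')+o(\delta)$ has an error term that is genuinely $o(\delta)$ as $\ep\to 0$ uniformly in the other parameters; this is where the piecewise Lipschitz regularity of $A'$ inherited from the hypothesis on $\partial A$ enters. The rest of the argument is routine, and the real conceptual content is the choice $\delta=\ep^{1-a}r^a z$, which is precisely tuned so that it simultaneously produces the Lipschitz bound and equates the orders of the gradient and potential contributions to the energy.
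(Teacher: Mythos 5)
Your construction is correct and matches the paper's proof in all essentials: a boundary layer of width $\ep^{1-a}r^{a}z$ adjacent to $A'$, a linear interpolation from $v$ to the nearer well across it, the constant value outside, and the estimate driven by $h\sim r$ on the strip together with the tube-volume formula. The only differences are cosmetic — you write the interpolation explicitly as a convex combination (which makes the pointwise bound $u\in[-m,m]$ immediate), whereas the paper prescribes the boundary values and invokes a Lipschitz extension; you measure distance to $A'$ rather than first extending $v$ to all of $\partial A$ — and neither affects the argument.
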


\begin{proof}
It follows the ideas of lemma 4.11 in \cite{Alberti-Bouchitte-Seppecher:Phase-transition}, but we need to take care of the weight $h$. First, we assume without loss of generality, that $A'=\partial A$. In fact, we can extend $v$ to $\partial A$ without increasing its Lipschitz constant. We additionally suppose that $z=\norm{v-\alpha}_{\infty}$; the other case $z=\norm{v-\beta}_{\infty}$ is similar.

Let $\delta=\ep^{1-a}r^a$, and set
\bee u(x):=
\left\{
  \begin{array}{ll}
    v(x), & \hbox{on }\partial A,  \\
    \alpha, & \hbox{on } A\backslash A_{z\delta},
  \end{array}
\right.
\eee
where $A_t$ is the set of all $x$ in $A$ such that $0<\dist(x,\partial A)<t$. Then, $u$ is $\lp \frac{1}{\delta}+\mbox{Lip} (v)\rp$-Lipschitz continuous on $\bar A\backslash A_{z\delta}$. Finally, $u$ can be extended to $\bar A$, without increasing its Lipschitz constant. We have
\bee\begin{split}
E^a_\ep[u,A] &
=\ep^{1-a}\int_{A_{z\delta}} \grad u  2 h^a +\frac{1}{\ep^{1-a}}\int_{A_{z\delta}} W(u)h^{-a} \\
& \lesssim \abs{A_{z\delta}}\left[ \ep^{1-a} \lp\frac{1}{\delta}+\mbox{Lip} (v)\rp^2 r^a+\frac{1}{\ep^{1-a}}C_m r^{-a}\right]\\
& \leq \left[ \lp \HH^2(\partial A)+o(1)\rp \lp \delta \mbox{Lip}(v)+1\rp^2 +C_m\right]z, \quad \mbox{as }\ep \to 0,
\end{split}\eee
where we have used that $\abs{A_t}= t\lp\HH^2(\partial A)+o(1)\rp$ as $t\to 0$.
\end{proof}

\bigskip

Now we are ready for the proof of the upper bound in theorem \ref{main-theorem}. Fix $u\in BV(\Omega,I)$ and $v\in BV(\partial\Omega,I')$. It is enough to assume that the singular sets of $u$ and $v$, $S_u$ and $S_v$ respectively, are closed manifolds of class $\mathcal C^2$ without boundary. This is so because every pair $(u,v)\in BV(\Omega,I)\times BV(\partial\Omega,I')$ can be approximated in $\in L^1(\Omega)\times L^1(\partial\Omega)$ by pairs that fulfils those regularity assumptions (see theorem  1.24 of \cite{Giusti}).  We assume that $u$ and $v$, up to modifications on negligible sets, are constant in each connected component of $\Omega\back S_u$ and $\partial\Omega\backslash S_v$ respectively. \\

The idea is to construct a partition of $\Omega$ into four subsets, and to use the preliminary convergence results of the previous sections to obtain the upper bound inequality.

For every $x\in \Omega$, set $d(x)=\dist(x,\partial\Omega)$ and $d':\partial\Omega\to \R$ is the oriented distance from $S_v$ defined by
\bee d'(x)=\left\{
  \begin{array}{ll}
    \dist(x,S_v)&\mbox{ if } x\in \{v=\beta'\}, \\
    -\dist(x,S_v)&\mbox{ if } x\in\{v=\alpha'\}.
  \end{array}
\right.
\eee

For every $r>0$, set
$$\Gamma_r:=\{ x\in \Omega : \dist(x,\partial\Omega)=r\}.$$
Fix $r>0$ such that $\Gamma_r$ and $\Gamma_{2r}$ are Lipschitz surfaces and $S_u\cap \Gamma_r$ is a Lipschitz curve.
With this is mind, we construct a partition of $\Omega$. Let
\bee\begin{split}
&\quad B_1:=\{x\in\Omega : \dist(x,S_v\cup (S_u\cap \Gamma_r))<3r\},\\
&\quad A_1:=\{x\in\Omega\backslash \bar B_1 : d(x)<r\}, \\
&\quad B_2:=\{x\in\Omega\backslash\bar B_1 : r<d(x)<2r\}, \\
&\quad A_2:=\{x\in\Omega\backslash \bar B_1 : 2r<d(x)\}.
\end{split}\eee
We will construct a Lipschitz function $u_\ep:=u_{r,\epsilon}$ for every $\ep<r$, piece by piece, with controlled Lipschitz constant. \\

\emph{Step 1: }
In the set $A_2$, we take $u_\ep$ as in part \emph{ii.} of proposition \ref{prop-interior}. We extend it to $\partial A_2$ by continuity. Hence $u_\ep$ is $\frac{C}{\ep^{1-a} r^a}$-Lipschitz on $\bar A_2$, $u_\ep$ converges pointwise to $u$ in $A_2$, uniformly on $\partial A_2\cap \partial B_2$, and
$$F^a_\ep[u_\ep, A_2,\emptyset]=E^a_\ep[u_\ep, A_2]\leq\sigma \HH^2(S_u\cap A_2)+o(1)\leq \sigma\HH^2(S_u)-\sigma\HH^2 \lp S_u\backslash A_2\rp +o(1)$$
as $\ep \to 0$. \\

\emph{Step 2: } Now we consider the set $A_1$.
The function $u$ is constant (equal to $\alpha$ or $\beta$) in every connected component $A$ of $A_1$ on $\partial A\cap \partial\Omega$, and the function $v$ is constant (equal to $\alpha'$ or $\beta'$) on $\partial A\cap\partial\Omega$. Then we can use proposition \ref{prop-wall} to get a function $u_\ep$ such that $Tu_\ep=v$ on $\partial A\cap \partial\Omega$ and $u_\ep$ converges to $u$ pointwise in $A_1$ and uniformly on every subset with positive distance from $\partial A\cap\partial\Omega$. By the same proposition, we also have that $u_\ep$ is $\frac{C_W}{\ep^{1-a}r^a}$-Lipschitz continuous on $\bar A_1$, and we can extend it to $\partial A_1$ by continuity. Since the distance of two connected components of $A_1$ is larger than $r$ and $\frac{1}{\ep^{1-a}r^a}>\frac{1}{r}$, choosing $C>\max\{2m,C_W\}$, it follows that $u_\ep$ is $\frac{C}{\ep^{1-a}r^a}$-Lipschitz continuous on $\bar A_1$ and agrees with $v$ on $\partial A_1\cap \partial\Omega$. Moreover, the function $u_\ep$ satisfies
$$F^a_\ep[u_\ep,A_1,\partial A_1\cap\partial\Omega]=E^a_\epsilon[u_\ep,A_1]\leq \int_{\partial A_1\cap \partial\Omega}\abs{\mathcal W(Tu_\ep)-\mathcal W(v)}+o(1), \quad \mbox{as }\ep \to 0.$$

\emph{Step 3: }
Note that in the previous steps we have constructed an optimal sequence in $\bar A_1 \cup \bar A_2$ that is $\frac{C}{\ep^{1-a}r^a}$-Lipschitz continuous, in particular, it is defined and Lipschitz on $((\partial A_1 \cup \partial A_2)\cap \partial B)$, for every connected component $B$ of $B_2$.

By virtue of lemma \ref{lemma-extension}, we can extend $u_\ep$ to every $B$, obtaining a $\frac{C+1}{\ep^{1-a}r^a}$-Lipschitz continuous function that satisfies
$$ F_\ep^a[u_\ep,B_2,\emptyset]=E^a_\ep[u_\ep,B_2]\leq z_\ep \lp (C+2)^2+C_m\rp\lp\HH^2(\partial B_2)+o(1)\rp=o(1)$$
as $\ep \to 0$, where we have used that $z_\ep:=\inf_{(\partial A_1 \cup \partial A_2)\cap \partial B_2} \abs{u_\ep -u}=o(1)$, since $u_\ep$ is constant on each connected components of $B_2$.\\

\emph{Step 4:}
To construct the function in the piece $B_1$ is the most delicate step. First, we need some preliminaries: construct a function on the whole $\R^2_+$ with suitable behavior.

Consider the rescalings
$$\Lambda_\ep<<\sigma_\ep<<\rho_\ep<<\ep^{1-a}$$
for some $\sigma_\ep=\ep^p$, $\rho_\ep=\ep^q$.  Let $\bar w_1$ be the function on  $\R^2_+$ defined in proposition \ref{prop-sharp} and its rescaling $\bar w_\ep(x,y):=\bar w_1\lp \frac{x}{\Lambda_\ep},\frac{y}{\Lambda_\ep}\rp$. We also consider the function $w_1$ from lemma \ref{lemma-boundary-effect} and its rescaling $w_\ep(x,y):= w_1\lp \frac{x}{\Lambda_\ep},\frac{y}{\Lambda_\ep}\rp$. We glue them, so that we obtain a function defined in the whole $\R^2_+$, as
\bee\tilde w_1 :=\left\{
                   \begin{array}{ll}
                     w_1, & \hbox{if } (x,y)\in D_{\sigma_\ep} \\
                     \bar w & \hbox{if } (x,y)\in \mathbb R^2_+ \backslash D_{\rho_\ep}
                   \end{array}
                 \right.
\eee
and smooth in between, with its corresponding rescaling
\be\label{tilde-w-epsilon}
\tilde w_\ep(x,y):=\tilde w_1\lp\frac{x}{\Lambda_\ep},\frac{y}{\Lambda_\ep} \rp.\ee
Because $\rho_\ep>>\sigma_\ep>>\Lambda_\ep$, we can apply lemma \ref{lemma-boundary-effect} to obtain
$$H^a_\ep[\tilde w_\ep,D_{\rho_\ep},E_{\rho_\ep}]=\kappa_s-o(1)$$
when $\ep \to 0$.

Now we pass from two to three dimensions.
In particular, we set $\tilde u_\ep$ on $S_v\times\mathbb R^2_+$ to be
\be\label{tilde-u}\tilde  u_\ep(x,y,z):=\tilde w_\ep(x,y)\quad\mbox{for every } z\in S_v, (x,y)\in\mathbb R^2_+.\ee
where $\tilde w_\ep$ is defined in \eqref{tilde-w-epsilon}. In addition, for any function $\tilde u$ defined on $S_v\times D$, we define the following functional.
\bee\begin{split}\tilde F_\ep^a[\tilde u, S_v\times D, S_v\times E] & :=  \ep^{1-a}\int_{S_v\times D} \grad {\tilde u} 2 y^a\;dxdydz \\
& +\frac{1}{\ep^{1-a}}\int_{S_v\times D}W(\tilde u)y^{-a}\;dxdydz+\lambda_\ep\int_{S_v\times E} V(T\tilde u)\;dxdz.
\end{split}\eee
For the $\tilde u_\ep$ we have constructed in \eqref{tilde-u}, Fubini's theorem implies that
\be\label{upper4}\begin{split}
\tilde F_\ep^a[\tilde u_\ep,S_v\times D_{\rho_\ep}, S_v\times E_{\rho_\ep}]
& = \HH^1(S_v) \lp H_\ep^a[\tilde w_\ep,D_{\rho_\ep}, E_{\rho_\ep}] + \frac{1}{\ep^{1-a}}\int_{D_{\rho_\ep}} W( \tilde w_\ep)y^{-a}\;dxdy\rp\\
& \leq \HH^1(S_v) \lp H_\ep^a[\tilde w_\ep,D_{\rho_\ep}, E_{\rho_\ep}]+\frac{C}{\ep^{1-a}}\int_{
x\in(-\rho_\ep,\rho_\ep), y\in(0,\rho_\ep)} y^{-a}\;dxdy\rp \\
& =\HH^1(S_v) \lp H_\ep^a[\tilde w_\ep,D_{\rho_\ep}, E_{\rho_\ep}]+ \frac{C'}{\ep^{1-a}} \rho_\ep^{2-a}\rp.
\end{split}\ee
We choose $1-a<p<q<\frac{1-a}{-a}$ and $q>\frac{1-a}{2-a}$. Then from \eqref{upper4} we obtain that
\be\label{eq100}\tilde F_\ep^a[\tilde u_\ep,S_v\times D_{\rho_\ep}, S_v\times E_{\rho_\ep}]\leq \HH^1(S_v) \left[ \kappa_s+o(1)\right]\ee
as $\ep\to 0$. \\

Now we transplant the function $\tilde u_\ep$ obtained to our remaining piece $B_1$.
Since $S_v$ is a boundary in $\partial\Omega$, we can construct a diffeomorphism between the intersection of a tubular neighborhood $U$ of $S_v$ and $\Omega$ and the product of $S_v$ with a half-disk. More precisely, for every $r>0$, we set
$$\mathcal S_r:=\{x\in\Omega : 0<\dist(x,S_v)<r\}.$$
For every $x\in\bar\Omega$, define
$$\Psi(x):=(x'',d'(x'),\dist(x,\partial\Omega)),$$
where $x'$ is a projection of $x$ on $\partial\Omega$ and $x''$ is a projection of $x'$ on $S_v$. The function $\Psi$ is well-defined and is a diffeomorphism of class $\mathcal C^2$ on $\bar \Omega\cap U$, and satisfies the following properties:
\begin{itemize}
\item $\Psi(\Omega\cap U)\subset S_v \times \mathbb R^2_+$,
\item $\Psi(\partial\Omega\cap U)\subset S_v\times\mathbb R\times \{0\}$,
\item $\Psi(x)=x$ for every $x\in\partial\Omega$.
\item $D\Psi(x)$ is an isometry,
\item $\lim_{r\to 0} \delta_r=0$, where $\delta_r$ is the isometry defect of the restriction of $\Psi$ to $\mathcal S_r$.
\end{itemize}
We construct $u_\ep$ on $S:=\overline {\mathcal S}_{\rho_\ep/2}$. With small modifications, we can assume that $S$ is a ``cubical'' neighborhood such that for $\ep$ small, the function $\Psi$ maps $S$ into $S_v\times D_{\rho_\ep}$ and $\partial S\cap\partial\Omega$ into $S_v\times E_{\rho_\ep}$,
Then we define $u_\ep:= \tilde u_\ep \circ \Psi$,
where $\tilde w_\ep$ is defined in \eqref{tilde-u}. Thus proposition \eqref{prop-flatten} and \eqref{eq100} give that
\bee\begin{split}
F^a_\ep[u_\ep,S, \partial S\cap \partial\Omega]
& \leq(1-\delta_\ep)^{-5}F^a_\ep[\tilde w_\ep, S_v\times D_{\rho_\ep}, S_v\times E_{\rho_\ep}] \\
& \leq \HH^{1}(S_v) \lp \kappa_s +o(1)\rp
\end{split}\eee
as $\ep\to 0$, because $\delta_\ep:=\delta(\Psi|_{\mathcal S_{\rho_\ep}})$ tends to zero as $\ep\to 0$.

Notice that for $\ep$ small enough, $\Psi$ is 2-Lipschitz continuous. Using again lemma \ref{lemma-extension}, we can extend $u_\ep$ by setting $u_\ep=v$ on the remaining part of $\partial B_1\cap \partial\Omega$. We have that $u_\ep$ is equal to $v$ on $\partial\Omega\backslash S$. Thus, we can extend $u_\ep$ on the whole $B_1\backslash S$ to a $\frac{2C+1}{\ep^{1-a}r^a}$-Lipschitz continuous function, which satisfies
\bee\begin{split}
F_\ep^a [u_\ep, B_1\backslash S,\partial (B_1\backslash \bar S)\cap \partial\Omega]
& = H^a_\ep[u_\ep, B_1\backslash \bar S] \\
& \leq \lp (2C+2)^2+C_m\rp \lp \HH^2(\partial B_1)+o(1)\rp 2m\end{split}\eee
as $\ep \to 0$, where we have used $\norm{u_\ep-\alpha}_{\infty}\wedge \norm{u_\ep-\beta}_{\infty}\leq 2m$.\\

\emph{Step 5: } We recall that for every $r>0$ and every $\ep<r$ we have constructed a function $u_\ep$ defined on the whole $\Omega$ such that
$$\limsup_{\ep \to 0}\norm{u_\ep-u}_{L^1(\Omega)}\leq 2m \lp \abs{B_1}+\abs{B_2}\rp$$
and
$$\limsup_{\ep \to 0} \norm {T u_\ep-v}_{L^1(\partial\Omega)}=0.$$
Since $\abs{B_1}$ and $\abs{B_2}$ have order $r^2$ and $r$ respectively, we get that $u_\ep \to u$ in $L^1(\Omega)$, first taking $\ep \to 0$ and then $r\to 0$.

Combining all the results above, we obtain
\bee\begin{split}
\limsup_{\ep \to 0} F_\ep^a[u_\ep]
&\leq \sigma\HH^2(S_u)+ \int_{\partial\Omega}\abs{\mathcal W(Tu(x))-\mathcal W(v(x))}d\HH^2 +\kappa_s\HH^1(S_v)\\
& -\sigma \HH^2(S_u\backslash A_2)+\lp (2C+2)^2+C_m\rp\lp\HH^2(\partial B_1)+o(1)\rp 2m.
\end{split}\eee
Since $\HH^2(\partial B_1)$ has order $r$, taking $r\to 0$ above we deduce the upper bound inequality. Finally, applying a suitable diagonalization argument, to the sequence $u_\ep:=u_{\ep,r}$ we obtain the desired sequence $u_\ep$. Proof of \emph{ii.} is completed.


\bigskip

\noindent {\bf Acknowledgements}\\
 I would like to thank Prof. Luis Caffarelli for suggesting the study of fractional order operators and his support at UT Austin.


\begin{thebibliography}{10}

\bibitem{Adams}
R.~A. Adams.
\newblock {\em Sobolev spaces}.
\newblock Academic Press [A subsidiary of Harcourt Brace Jovanovich,
  Publishers], New York-London, 1975.
\newblock Pure and Applied Mathematics, Vol. 65.


\bibitem{Abramowitz-Stegun}
M.~Abramowitz and I.~A. Stegun.
\newblock {\em Handbook of mathematical functions with formulas, graphs, and
  mathematical tables}, volume~55 of {\em National Bureau of Standards Applied
  Mathematics Series}.
\newblock For sale by the Superintendent of Documents, U.S. Government Printing
  Office, Washington, D.C., 1964.

\bibitem{Alberti:survey}
G.~Alberti.
\newblock Variational models for phase transitions, an approach via
  {$\Gamma$}-convergence.
\newblock In {\em Calculus of variations and partial differential equations
  (Pisa, 1996)}, pages 95--114. Springer, Berlin, 2000.

\bibitem{Alberti-Bellettini:anisotropic-model}
G.~Alberti and G.~Bellettini.
\newblock A non-local anisotropic model for phase transitions: asymptotic
  behaviour of rescaled energies.
\newblock {\em European J. Appl. Math.}, 9(3):261--284, 1998.

\bibitem{Alberti-Bouchitte-Seppecher:singular-perturbations}
G.~Alberti, G.~Bouchitt{\'e}, and P.~Seppecher.
\newblock Un r\'esultat de perturbations singuli\`eres avec la norme {$H\sp
  {1/2}$}.
\newblock {\em C. R. Acad. Sci. Paris S\'er. I Math.}, 319(4):333--338, 1994.

\bibitem{Alberti-Bouchitte-Seppecher:Phase-transition}
G.~Alberti, G.~Bouchitt{\'e}, and P.~Seppecher.
\newblock Phase transition with the line-tension effect.
\newblock {\em Arch. Rational Mech. Anal.}, 144(1):1--46, 1998.

\bibitem{Cabre-Consul:boundary-reactions}
X.~Cabr{\'e} and N.~C\'onsul.
\newblock Minimizers for boundary reactions: renormalized energy, location of
  singularities, and applications.
\newblock In preparation.

\bibitem{Cabre-SolaMorales:layer-solutions}
X.~Cabr{\'e} and J.~Sol{\`a}-Morales.
\newblock Layer solutions in a half-space for boundary reactions.
\newblock {\em Comm. Pure Appl. Math.}, 58(12):1678--1732, 2005.

\bibitem{Caffarelli-Silvestre}
L.~Caffarelli and L.~Silvestre.
\newblock An extension problem related to the fractional {L}aplacian.
\newblock {\em Comm. Partial Differential Equations}, 32(7-9):1245--1260, 2007.

\bibitem{Evans-Gariepy}
L.~C. Evans and R.~F. Gariepy.
\newblock {\em Measure theory and fine properties of functions}.
\newblock Studies in Advanced Mathematics. CRC Press, 1992.

\bibitem{Fabes-Kenig-Serapioni:local-regularity-degenerate}
E.~B. Fabes, C.~E. Kenig, and R.~P. Serapioni.
\newblock The local regularity of solutions of degenerate elliptic equations.
\newblock {\em Comm. Partial Differential Equations}, 7(1):77--116, 1982.

\bibitem{Garcia-Cuerva-Rubio}
J.~Garc{\'{\i}}a-Cuerva and J.~L. Rubio~de Francia.
\newblock {\em Weighted norm inequalities and related topics}, volume 116 of
  {\em North-Holland Mathematics Studies}.
\newblock North-Holland Publishing Co., Amsterdam, 1985.
\newblock , Notas de Matem\'atica [Mathematical Notes], 104.

\bibitem{Garroni-Palatucci:singular-perturbation-fractional-norm}
A.~Garroni and G.~Palatucci.
\newblock A singular perturbation result with a fractional norm.
\newblock In {\em Variational problems in materials science}, volume~68 of {\em
  Progr. Nonlinear Differential Equations Appl.}, pages 111--126. Birkh\"auser,
  Basel, 2006.

\bibitem{Garsia-Rodemich:rearrangement}
A.~M. Garsia and E.~Rodemich.
\newblock Monotonicity of certain functionals under rearrangement.
\newblock {\em Ann. Inst. Fourier (Grenoble)}, 24(2):vi, 67--116, 1974.
\newblock Colloque International sur les Processus Gaussiens et les
  Distributions Al\'eatoires (Colloque Internat. du CNRS, No. 222, Strasbourg,
  1973).

\bibitem{Giusti}
E.~Giusti.
\newblock {\em Minimal surfaces and functions of bounded variation}, volume~80
  of {\em Monographs in Mathematics}.
\newblock Birkh\"auser Verlag, Basel, 1984.

\bibitem{Landkof}
N.~S. Landkof.
\newblock {\em Foundations of modern potential theory}.
\newblock Springer-Verlag, New York, 1972.
\newblock Translated from the Russian by A. P. Doohovskoy, Die Grundlehren der
  mathematischen Wissenschaften, Band 180.

\bibitem{Modica:phase-transitions}
L.~Modica.
\newblock The gradient theory of phase transitions and the minimal interface
  criterion.
\newblock {\em Arch. Rational Mech. Anal.}, 98(2):123--142, 1987.

\bibitem{Modica:phase-transitions-2}
L.~Modica.
\newblock Gradient theory of phase transitions with boundary contact energy.
\newblock {\em Ann. Inst. H. Poincar\'e Anal. Non Lin\'eaire}, 4(5):487--512,
  1987.

\bibitem{Modica-Mortola}
L.~Modica and S.~Mortola.
\newblock Un esempio di {$\Gamma \sp{-}$}-convergenza.
\newblock {\em Boll. Un. Mat. Ital. B (5)}, 14(1):285--299, 1977.


\bibitem{Monneau}
M.~d.~M. Gonz\'alez and R.~Monneau.
\newblock Motion of particle systems as a limit of a reaction-diffusion
  equation with fractional laplacian.
\newblock In preparation.

\bibitem{Nekvinda:characterization-tracesW}
A.~Nekvinda.
\newblock Characterization of traces of the weighted {S}obolev space {$W\sp
  {1,p}(\Omega,d\sp \epsilon\sb M)$} on {$M$}.
\newblock {\em Czechoslovak Math. J.}, 43(118)(4):695--711, 1993.

\bibitem{Palatucci:Gamma-convergence}
G.~Palatucci.
\newblock Phase transitions with the line tension effect: the super-quadratic
  case.
\newblock Math. Models Methods Appl. Sciences, accepted paper, 2008.

\bibitem{Palatucci:thesis}
G.~Palatucci.
\newblock A clase of phase transition problems with the line tension effect.
\newblock PhD thesis, 2007.

\bibitem{Palatucci-Sire}
G.~Palatucci and Y.~Sire.
\newblock $\Gamma$-convergence of some super quadratic functionals with
  singular weights.
\newblock Preprint 2008.

\bibitem{Silvestre:thesis}
L.~Silvestre.
\newblock Regularity of the obstacle problem for a fractional power of the Laplace operator.
\newblock PhD thesis, 2005.


\bibitem{Stein:Harmonic-analysis}
E.~M. Stein.
\newblock {\em Harmonic analysis: real-variable methods, orthogonality, and
  oscillatory integrals}, volume~43 of {\em Princeton Mathematical Series}.
\newblock Princeton University Press, Princeton, NJ, 1993.
\newblock With the assistance of Timothy S. Murphy, Monographs in Harmonic
  Analysis, III.

\bibitem{Valadier:Young-measures}
M.~Valadier.
\newblock Young measures.
\newblock In {\em Methods of nonconvex analysis}, volume 1446
  of {\em Lecture Notes in Math.}, pages 152--188. Springer, 1990.

\bibitem{Watson:Bessel}
G.~N. Watson.
\newblock {\em A treatise on the theory of {B}essel functions}.
\newblock Cambridge Mathematical Library. Cambridge University Press,
  Cambridge, 1995.
\newblock Reprint of the second (1944) edition.

\end{thebibliography}
\end{document}